 \renewcommand*{\backrefalt}[4]{%
    \ifcase #1%
     \or (Cited on page~#2.)%
     \else (Cited on pages~#2.)%
    \fi%
    }
\newcommand{\rbr}[1]{\left(#1\right)}
\title{\bf Two Sides of One Coin: the Limits of Untuned SGD and the Power of Adaptive Methods}
\author{
Junchi Yang
$^\dagger$
$^*$
\and
Xiang Li
$^\dagger$
$^*$
\and
Ilyas Fatkhullin 
$^\dagger$
\and
Niao He
$^\dagger$
}
\date{\vspace{1ex}}
\begin{document}
\maketitle
\def\thefootnote{$*$}\footnotetext{
Equal contribution.
}
\def\thefootnote{$\dagger$}\footnotetext{Department of Computer Science, ETH Zurich, Switzerland.
Emails: \texttt{junchi.yang@inf.ethz.ch},
\texttt{xiang.li@inf.ethz.ch},
\texttt{ilyas.fn979@gmail.com},
\texttt{niao.he@inf.ethz.ch}.
}
\def\thefootnote{\arabic{footnote}}

\begin{abstract}
The classical analysis of Stochastic Gradient Descent (SGD) with polynomially decaying stepsize $\eta_t = \eta/\sqrt{t}$ relies on well-tuned $\eta$ depending on  problem parameters such as Lipschitz smoothness constant, which is often unknown in practice.  
In this work, we prove that SGD with arbitrary $\eta > 0$, referred to as \textit{untuned SGD}, still attains an order-optimal convergence rate $\widetilde{\cO}(T^{-1/4})$ in terms of gradient norm for minimizing smooth objectives.  
Unfortunately, it comes at the expense of a catastrophic exponential dependence on the smoothness constant, which we show is unavoidable for this scheme even in the noiseless setting. We then examine three families of adaptive methods --- Normalized SGD (NSGD), AMSGrad, and AdaGrad --- unveiling their power in preventing such exponential dependency in  the absence of information about the smoothness parameter and boundedness of stochastic gradients. 
Our results provide  theoretical justification for the advantage of adaptive methods over untuned SGD in alleviating the issue with large gradients.  

\end{abstract}

\section{Introduction}
\label{sec:intro}

In this work, we study the stochastic optimization problem of the form: 
\begin{align*}
  \min_{x \in \bR^{d}}  \ f(x) = \mathbb{E}_{\xi \sim P}\left[ F(x;\xi)\right], %
  \label{eq:objective} 
\end{align*}
where $P$ is an unknown probability distribution, and $f: \bR^{d} \rightarrow \bR$ is an $\ell$-Lipschitz smooth function and can be non-convex. In the context of machine learning, $\xi$ may represent an individual training sample from the data distribution  $P$, and $x$ denotes the weights of the model. 

Stochastic Gradient Descent (SGD), originated from the seminal work~\citep{robbins1951stochastic}, performs the following update iteratively:
\begin{equation*}
  x_{t+1} = x_t - \eta_t \nabla F(x_t; \xi_t),\label{eq:sgd}
\end{equation*}
where $\eta_t > 0$ is some stepsize and $\nabla F(x_t;\xi_t)$ is an unbiased stochastic gradient.  
SGD has shown remarkable empirical success in many modern machine learning applications, e.g., \citep{Bengio09ftml,sutton-barto18}.  Its efficiency is usually attributed to its cheap per iteration cost and the ability to operate in an online fashion, making it suitable for large-scale problems. %
However, empirical evidence also reveals undesirable behaviors of SGD, often related to challenges in selecting appropriate stepsizes. In particular, a number of works report the \textit{gradient explosion} effect \citep{Bengio94_ltdep,pascanu2013difficulty,Goodfellow_2016}  during the initial phase of training, which may eventually lead to divergence or prohibitively slow convergence. 
The phenomenon is also observed in our experiments (see Figure~\ref{subfig:mnist}) when the stepsize is poorly chosen.
Unfortunately, this phenomenon is not well understood from a theoretical point of view. The classical analysis of SGD in the smooth non-convex case \citep{ghadimi2013stochastic}, prescribes to select a non-increasing sequence of stepsizes $\left\{ \eta_t \right\}_{t\geq1}$ with $\eta_1 < 2/\ell$. In particular, the choice $\eta_t = 1 / ( \ell \sqrt{t} )$, guarantees\footnote{Given access to unbiased stochastic gradient oracle with bounded variance.}  to find a point $x$ with $\Ep{\norm{\nabla f(x) }} \leq \epsilon$ after $\cO\left( \epsilon^{-4} \right)$ stochastic gradient calls, which is also known to be unimprovable in the smooth non-convex setting unless additional assumptions are made \citep{arjevani2022lower,drori2020complexity}.

\begin{table*}[t]
		\centering
		\small
\caption{Complexities of finding an $\epsilon$-stationary point for SGD, NSGD \citep{nesterov1984minimization}, NSGD-M \citep{cutkosky2020momentum}, AMSGrad-norm (norm version of AMSGrad \citep{reddi2018convergence}), and AdaGrad-norm \citep{streeter2010less}.
  We only assume $f$ is $\ell$-smooth, and unbiased stochastic gradients have bounded variance $\sigma^2$. Hyper-parameters (e.g., $\gamma$ and $\eta$) are untuned. Here, $\widetilde{\cO}$ and $\Omega$ hide polynomial terms in problem parameters and hyper-parameters. The bounds are with respect to specific algorithms and stepsizes, and lower bounds for general first-order methods still hold \citep{carmon2020lower, arjevani2022lower}.  
  We denote the effective stepsize at iteration $t$ as $\eta_t$.
  }
		\renewcommand{\arraystretch}{1.49}
  \hspace*{-1.2em}
		\begin{threeparttable}[b]
			\begin{tabular}{c | c | c | c | c}
				\hline
				\hline
				\multirow{2}{*}{\textbf{Algorithms}} & 
				\multirow{2}{*}{
					\makecell[c]{
						\textbf{Upper bound;} \vspace{0.1em}\\
						\textbf{deterministic} 
					} 
				} & 
				\multirow{2}{*}{
					\makecell[c]{
						\textbf{Lower bound;} \vspace{0.1em}\\
						\textbf{deterministic}
					}
				} 
				&
                    \multirow{2}{*}{
					\makecell[c]{
						\textbf{Upper bound;} \vspace{0.1em}\\
						\textbf{stochastic}
					}
				} 
				&
                    \multirow{2}{*}{
					\makecell[c]{
						\textbf{Lower bound;} \vspace{0.1em}\\
						\textbf{stochastic}
					}
				} 
				\\
				& &  & &
				\\
				\hline 
				\hline
				\multirow{2}{*}{
                    \makecell[c]{
					    SGD (Alg. \ref{alg:sgd}) \vspace{0.1em}\\
						$\eta_t = \frac{\eta}{\sqrt{t+1}}$
					}
                    }
				& \hspace*{-3mm}
                    \multirow{2}{*}{
					\makecell[c]{
					    $\widetilde{\cO}\left( (4e)^{2(\eta\ell)^{2}}\epsilon^{-4}\right)$ \\
					  $[$Thm.  \ref{thm:unbounded}, \ref{thm:general_stepsize}$]$
					}
				} \hspace*{-3mm}
				& 
                    \hspace{-1mm}\multirow{2}{*}{
					\makecell[c]{
						$\Omega\left( (8e)^{\eta^2\ell^2/8}\epsilon^{-4}\right)$ \vspace{0.1em}\\
					$[$Thm. \ref{thm:lower_bound}$]$
					}
				}\hspace{-2mm} 
				& 
                    \hspace{-2mm}\multirow{2}{*}{
					\makecell[c]{
						$\widetilde{\cO}\left( (4e)^{2(\eta\ell)^{2}}\epsilon^{-4}\right)$  \\
					   $[$Thm. \ref{thm:unbounded}, \ref{thm:general_stepsize}$]$
					}
				}\hspace{-2mm} 
				& 
                    \multirow{2}{*}{
					\makecell[c]{
						$\Omega\left( (8e)^ {\eta^2\ell^2/8}\epsilon^{-4}\right)$ \vspace{0.1em}\\
					$[$Thm. \ref{thm:lower_bound}$]$
					}
				} 
				\\
                    & & & &
                    \\
				\hline
				\multirow{2}{*}{
                    \makecell[c]{
						NSGD (Alg. \ref{alg:nsgd}) \vspace{0.1em}\\
						$\eta_t = \frac{\gamma}{\sqrt{t+1}\| g(x_t; \xi_t)\|}$
					}
                    }
				& 
                    \multirow{2}{*}{
					\makecell[c]{
					    $\widetilde{\cO}\left( \epsilon^{-2}\right)$ \vspace{0.1em}\\
						\citep{cutkosky2020momentum} \&
                        $[$Prop. \ref{thm:NSGD}$]$
					}
				} 
				& 
                    \multirow{2}{*}{
					\makecell[c]{
						$\Omega\left( \epsilon^{-2}\right)$ \vspace{0.1em}\\
						\citep{carmon2020lower}
					}
				} 
				& 
                    \multirow{2}{*}{
                    \makecell[c]{
					N/A due to \\ lower bound
                 }
				} 
				& 
                    \multirow{2}{*}{
					\makecell[c]{
						Nonconvergent \vspace{0.1em}\\
						$[$Thm. \ref{lemma:nonconvergence}$]$
					}
				} 
				\\
                    & & & &
                    \\
				\hline
				\multirow{2}{*}{
                    \makecell[c]{
					    NSGD-M (Alg. \ref{alg:nsgdm}) \vspace{0.1em}\\
						$\eta_t = \frac{\gamma}{(t+1)^{\alpha}\| g_t\|}$
					}
                    }
				& 
                    \multirow{2}{*}{
					\makecell[c]{
					    $\widetilde{\cO}\left( \epsilon^{-2}\right)$, $\alpha = 1/2$ \vspace{0.1em}\\
         \citep{cutkosky2020momentum} \&
						$[$Prop. \ref{thm:NSGD}$]$
					}
				} 
				& 
                    \multirow{2}{*}{
					\makecell[c]{
						$\Omega\left( \epsilon^{-2}\right)$ \vspace{0.1em}\\
						\citep{carmon2020lower}
					}
				} 
				& 
                    \hspace{-1mm}\multirow{2}{*}{
					\makecell[c]{
						$\widetilde{\cO}\left( \epsilon^{-4}\right)$, $\alpha = 3/4$  \vspace{0.1em}\\
      \citep{cutkosky2020momentum} \&
						$[$Prop. \ref{alg:nsgdm}$]$
					}
				} 
				& 
                    \multirow{2}{*}{
					\makecell[c]{
						$\Omega\left( \epsilon^{-4}\right)$ \vspace{0.1em}\\
						\citep{arjevani2022lower}
					}
				}
				\\
                    & & & &
                    \\
				\hline
				\multirow{2}{*}{
                    \makecell[c]{
					   AMSGrad-norm 
                        (Alg. \ref{alg:amsgrad_full})\vspace{0.2em}\\
						 $\eta_t = \frac{\gamma}{\sqrt{(t+1)\hat v^2_{t+1}}}$
					}
                    }
				& 
                    \multirow{2}{*}{
					\makecell[c]{
					    $\widetilde{\cO}\left( \epsilon^{-4}\right)$ \vspace{0.1em}\\
						$[$Thm. \ref{theorem:ams_grad}, \ref{thm:amsgrad_general_ss}$]$
					}
				} 
				& 
                    \multirow{2}{*}{
					\makecell[c]{
						$\Omega\left( \epsilon^{-4}\right)$ \vspace{0.1em}\\
                        $[$Thm. \ref{thm:amsgrad_deter_lower_bound}$]$
					}
				} 
				& 
                    \multirow{2}{*}{
                    \makecell[c]{
					N/A due to \\ lower bound
                 }
				} 
				& 
                    \hspace{-2mm }\multirow{2}{*}{
					\makecell[c]{
						$\Omega\left( \epsilon^{-\frac{2}{1-\zeta}}\right)$ 
					$\forall \zeta\in (0.5, 1)$ \vspace{0.3em}\\	$[$Thm. \ref{thm:amsgrad_stoc}$]$
					}
				} \hspace*{-4mm}
				\\
                    & & & &
                    \\
				\hline
                    \hspace{-3mm}\multirow{2}{*}{
                    \makecell[c]{
					    AdaGrad-norm \vspace{0.1em}
						(Alg. \ref{alg:adagrad}) \\ $\eta_t = \frac{\eta}{\sqrt{ v_0^2 + \sum_{k = 0}^t \|g(x_k;\xi_k)\|^2}}$
					}
                    } \hspace*{-3mm}
				& 
                    \multirow{2}{*}{
					\makecell[c]{
					    $\widetilde{\cO}\left( \epsilon^{-2}\right)$ \vspace{0.1em}\\
         \citep{yang2022nest} \&
						$[$Prop. \ref{prop:adagrad}$]$
					}
				} 
				& 
                    \multirow{2}{*}{
					\makecell[c]{
						$\Omega\left( \epsilon^{-2}\right)$  \vspace{0.1em}\\
						\citep{carmon2020lower}
					}
				} 
				& 
                    \multirow{2}{*}{
                    \makecell[c]{
						$\widetilde{\cO}\left( \epsilon^{-4}\right)$  \vspace{0.1em}\\
      \citep{yang2022nest} \&
						$[$Prop. \ref{prop:adagrad}$]$
					}
				} 
				& 
                    \multirow{2}{*}{
					\makecell[c]{
						$\Omega\left( \epsilon^{-4}\right)$ \vspace{0.1em}\\
					\citep{arjevani2022lower}
					}
				} 
				\\
                    & & & &
                    \\
				\hline
				\hline
			\end{tabular}
		\end{threeparttable}
  \label{table:summary_results1}
 \end{table*}

However, the bound on the smoothness parameter $\ell$ is usually not readily available for practitioners, and the limited computing power usually refrains them from exhaustive tuning to find the best stepsize. It is therefore important to provide theoretical understanding for SGD with an arbitrary stepsize (which we refer to as \emph{untuned SGD}) that is agnostic to the problem parameter. The following intriguing question remains elusive in the stochastic optimization literature:

\begin{quote}
\textit{How does untuned SGD with decaying stepsize $\eta_t = \eta/\sqrt{t}$ perform when $\eta$ is independent of the smoothness parameter? How to explain the undesirably large gradients encountered in training with SGD?}
\end{quote}

Recently, there has been a surge of interest in adaptive gradient methods such as \mbox{Adam} \citep{kingma2015adam}, \mbox{RMSProp} \citep{hinton2012neural}, \mbox{AdaDelta} \citep{Zeiler_2011_Adadelta}, \mbox{AMSGrad} \citep{reddi2018convergence}, \mbox{AdaGrad} \citep{duchi2011adaptive}, Normalized SGD \citep{hazan2015beyond} and many others. These methods automatically adjust their stepsizes based on past stochastic gradients rather than using pre-defined iteration-based schedules. Empirically, they are observed to converge faster than SGD and mitigate the issue of gradient explosion across a range of problems, even without explicit knowledge of problem-specific parameters \citep{kingma2015adam,Liu_OnVarianceAdapt_2020, pascanu2013difficulty}. Figure~\ref{subfig:quadratic} provides a basic illustration of performance differences between SGD with $\eta_t = 1/\sqrt{t}$ stepsizes and adaptive schemes such as AdaGrad and Normalized SGD with momentum (NSGD-M) \citep{cutkosky2020momentum}. 
Notably, when the initial stepsize is too large (compared to $1/\ell$ value), SGD reaches the region with \textit{large gradients}, while adaptive methods do not suffer from such effect. 
However, the theoretical benefits of adaptive methods over SGD remain unclear. A large number of existing analyses of adaptive methods assume bounded gradients, or even stochastic gradients, precluding not only a fair comparison with SGD whose convergence does not need bounded gradient but also the possibility to explain their benefit when facing gradient explosions. While recent developments  show that AdaGrad-type methods \citep{Faw_2022_Power_Adapt, yang2022nest} can attain $\widetilde{\cO}\left( \epsilon^{-4} \right) $ sample complexity under
the same standard assumptions as for SGD analysis,
there still lacks a good explanation for the huge performance gap observed in practice despite SGD with well-tuned stepsizes theoretically achieving the lower complexity bound. We will address the following open question:

\begin{quote}
\textit{Can we justify the theoretical benefits of adaptive methods over untuned SGD for smooth non-convex problems without assuming bounded gradients?} 
\end{quote} 

\begin{figure}[t]
    \centering
    \subfigure[quadratic function]{
      \centering
      \includegraphics[width=0.45\textwidth]{./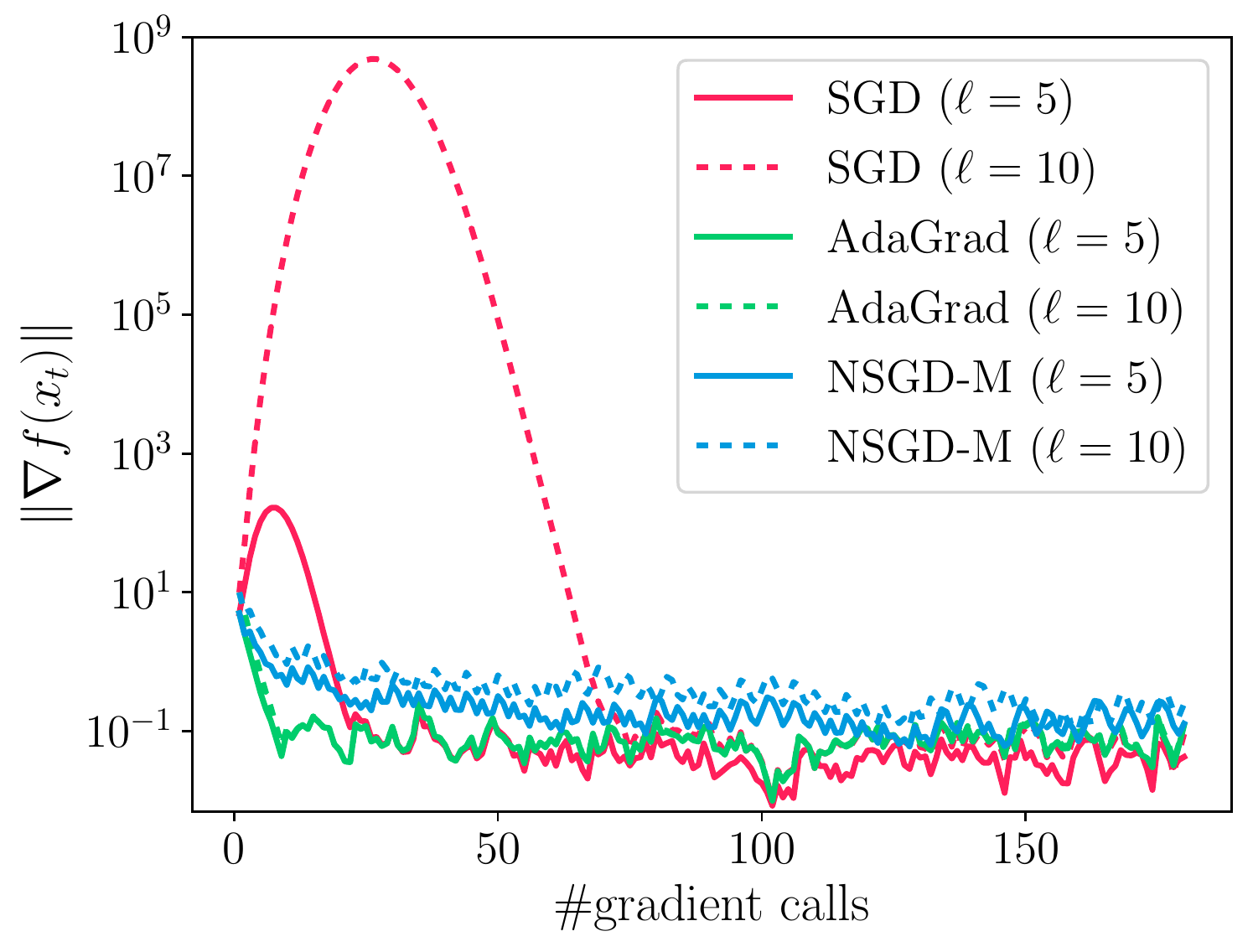}
      \label{subfig:quadratic}
    }
    \hspace{9mm}
    \subfigure[neural network]{
      \centering
      \includegraphics[width=0.45\textwidth]{./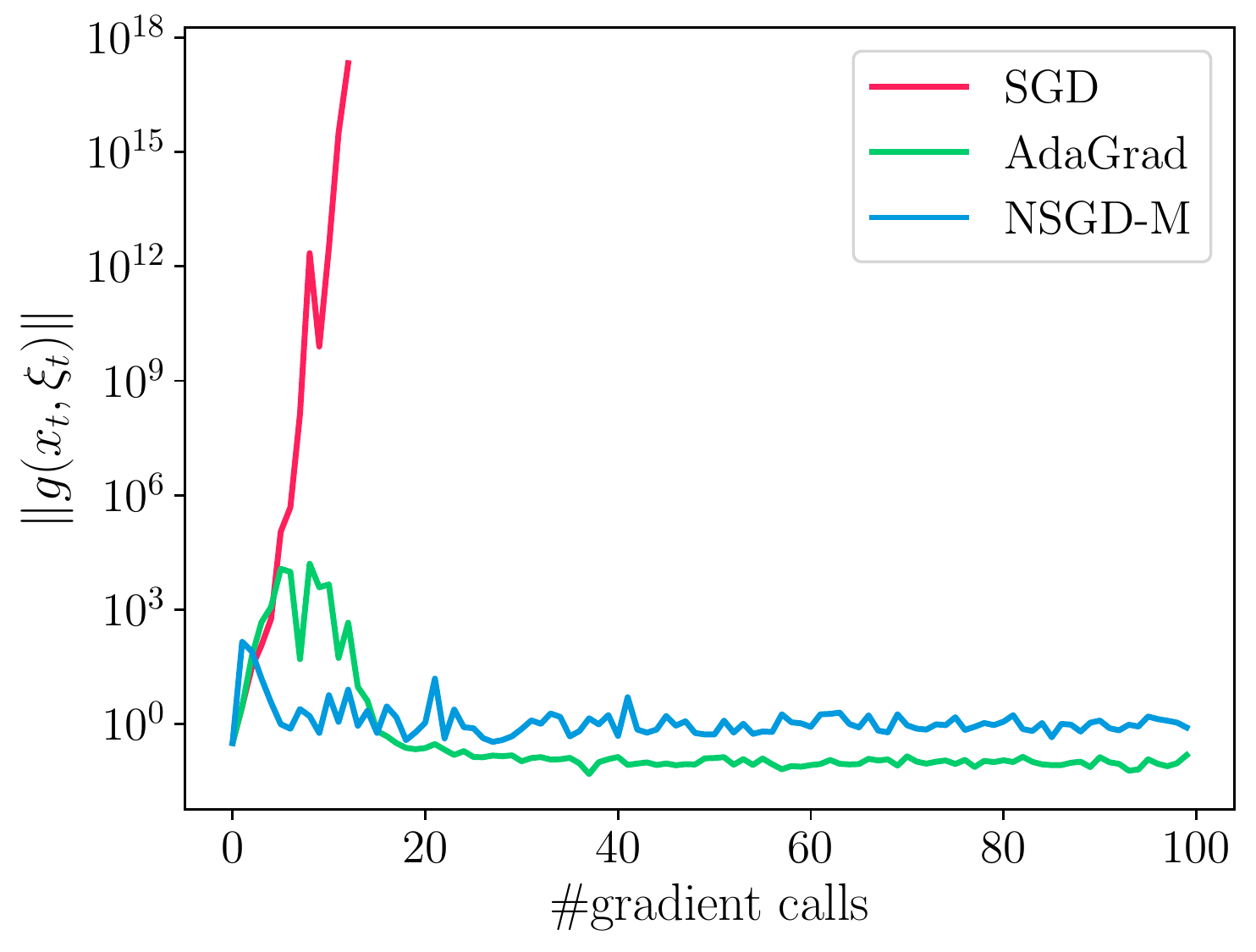}
      \label{subfig:mnist}
    }
    \caption{Comparison of SGD, AdaGrad, and NSGD-M on a quadratic function $f(x) = \ell x^2 / 2$ and a neural network. SGD employs a diminishing stepsize of $\eta/\sqrt{t}$, while the stepsizes for AdaGrad and NSGD-M are specified in Propositions \ref{thm:nsgdm} and \ref{prop:adagrad}, respectively. In the left figure, we set $\eta=1$ for all methods and test with two different values of $\ell$. In the right figure, we train a 3-layer neural network on the MNIST dataset \citep{lecun1998mnist} using cross-entropy loss and set $\eta=10$.
    }
    \label{fig:sgd_vs_adagrad}
\end{figure}

Consequently, this work is based on the premise of not assuming bounded gradients and hyper-parameters being independent of problem parameters. The main contributions are as follows:
\begin{itemize}[leftmargin=*]
    \item We show that untuned SGD with diminishing stepsizes $\eta_t = \eta/\sqrt{t}$ finds an $\epsilon$-stationary point of an $\ell$-smooth function within $\widetilde{\cO}((\ell^2 + \sigma^4 \eta^4 \ell^4 ) (4e)^{2\eta^2\ell^2}\epsilon^{-4})$ 
    iterations for any $\eta > 0$. Here $\sigma^2$ corresponds to the variance of the stochastic gradient. 
    Although it converges and has the optimal dependence on $\epsilon$, we show that the disastrous exponential term in ${\eta^2\ell^2}$ is unavoidable even when the algorithm has access to exact gradients. This explains its proneness to gradient explosion when the problem parameter is unknown. Previous analyses fail to capture this exponential term, as they assume $\eta$ is well-tuned to be $\Theta(1/\ell)$.
    \item AMSGrad, proposed to fix the nonconvergence of Adam, is not yet well-understood, with previous analyses depending on \textit{bounded stochastic gradients}. We show that AMSGrad (norm version) is free from exponential constants 
    in the deterministic setting without tuning, in stark contrast with SGD. Surprisingly, in the stochastic setting when the stochastic gradients are unbounded, we show that AMSGrad may converge at an arbitrarily slow polynomial rate. To the best of our knowledge, these are the first results of AMSGrad without assuming bounded gradients.
    \item To further illuminate the advantages of adaptive methods, we re-examine the results for Normalized Gradient Descent (NGD), Normalized SGD with momentum (NSGD-M) from \citep{cutkosky2020momentum} and AdaGrad-norm from \citep{yang2022nest}, considering stepsize independent of the problem parameters similar to untuned SGD. They all achieve near-optimal complexities while shredding off the exponential factor.  As a side result, we provide a strong non-convergence result of NSGD without momentum under any bounded stepsizes, which might be of independent interest.

\end{itemize}

Our findings contribute a fresh understanding of the performance gap between SGD and adaptive methods. Albeit with a near-optimal rate, untuned SGD is vulnerable to gradient explosion and slow convergence due to a large exponential constant in its complexity, which can be circumvented by several adaptive methods. To the best of our knowledge, this substantial difference is unformed in the previous literature, because the majority of analyses for SGD and adaptive methods turn to either well-tuned stepsize based on problem parameters or the assumption of bounded gradients. Part of our results are summarized in Table~\ref{table:summary_results1} and full results for a broader range of stepsizes can be found in Table~\ref{table:summary_results2} in the appendix.

\subsection{Related Work}

\paragraph{SGD in nonconvex optimization.} Stochastic approximation methods and SGD in particular have a long history of development \citep{robbins1951stochastic,Kiefer_Wolfowitz_1952,blum1954multidimensional,chung1954stochastic,nemirovski1983problem,polyak1992acceleration}. %
The last decade has witnessed fast progress
in the analysis of such methods for 
nonconvex optimization. 
When the objective is $\ell$-smooth and the gradient noise has bounded variance $\sigma^2$, \citet{ghadimi2013stochastic} and \citet{bottou2018optimization} prove that if $\eta_t = \eta/\sqrt{T}$, where $\eta = \eta(\ell, \sigma^2)$ and $T$ is the total iteration budget, then SGD finds an $\epsilon$-stationary point within $\cO(\ell\sigma^2\epsilon^{-4})$ iterations. Similar complexity (up to a logarithmic term) can also be achieved by decaying stepsizes $\eta/\sqrt{t}$ \citep{ghadimi2013stochastic, drori2020complexity, wang2021convergence}. This result was later shown to be optimal for first-order methods under these assumptions \citep{arjevani2022lower}. Several works consider various relaxations of the stochastic oracle model with bounded variance, for instance, biased oracle \citep{Ajalloeian_Stich_2021} or
expected smoothness \citep{khaled2020better}. However, these results also heavily rely on sufficiently small $\eta$, e.g., $\eta \leq 1/\ell$, and the convergence behavior in the large $\eta$ regime is rarely discussed. Remarkably, \citet{lei2019stochastic} characterize %
the convergence of SGD under individual smoothness and unbiased function values instead of bounded variance. 
They consider Robbins-Monro type stepsize schemes, which also includes $\eta/t^{\alpha}$ when $\alpha > 1/2$, and derive $\cO(\epsilon^{\frac{2}{\alpha - 1}})$ sample complexity %
including an exponential dependence on individual smoothness parameters.
We note that unlike \citep{lei2019stochastic}, we focus on the standard assumptions and derive better dependency in smoothness constant when $\alpha > 1/2$.
Importantly, we further 
justify that the exponential constants are unavoidable by providing a matching lower bound.

\paragraph{Adaptive methods.} We will restrict our focus to methods using gradients to adjust stepsize, rather than other strategies, such as backtracking line search \citep{armijo1966minimization}. Normalized Gradient Descent (NGD), which takes the stepsize to be normalized by the current gradient, was introduced by \citep{nesterov1984minimization} to minimize quasi-convex functions. \citet{hazan2015beyond} apply NGD and NSGD with minibatch to a wider class of locally-quasi-convex functions. They also show that minibatch is necessary for NSGD with constant learning rates. Later, \citet{cutkosky2020momentum} and \citet{zhao2021convergence} prove NSGD with momentum or minibatch, respectively, can find an $\epsilon$-stationary point in smooth nonconvex optimization with sample complexity $\cO(\epsilon^{-4})$. Different from NSGD, AdaGrad uses all past gradients and was initially introduced in the online convex optimization \citep{duchi2011adaptive, mcmahan2010adaptive}. In nonconvex optimization, AdaGrad and its scalar version, AdaGrad-norm \citep{streeter2010less}, achieve competitive convergence rate with SGD with decaying stepsizes \citep{ward2020adagrad, li2019convergence, kavis2022high, li2020high}. RMSProp \citep{hinton2012neural} and Adam \citep{kingma2015adam} use the decaying moving average of past gradients, but may suffer from divergence without hyper-parameter tuning \citep{reddi2018convergence}. Recently, it was shown that they can converge to a neighborhood, whose size shrinks to 0 by tuning hyper-parameters \citep{shi2021rmsprop, zhang2022adam}. Many variants of Adam are proposed,  including AMSGrad \citep{reddi2018convergence}, AdaBound \citep{luo2019adaptive}, Adan \citep{xie2022adan}, etc. However, most of these results on AdaGrad and Adam-type algorithms assume both Lipschitz and bounded gradients \citep{zhou2018convergence, chen2019convergence, defossez2020simple, ward2020adagrad, zou2019sufficient}. Very recently, \citet{Faw_2022_Power_Adapt} and \citet{yang2022nest} independently show that AdaGrad-norm converges without assuming bounded gradients and without the need for tuning. Moreover, its sample complexity is $\widetilde{\cO}(\epsilon^{-4})$. %

\paragraph{SGD v.s. adaptive methods.} Although we see above that SGD with polynomially decaying stepsize and adaptive stepsizes achieve similar complexities, adaptive methods are observed to have faster convergence in practice \citep{brown2020language, liu2020understanding} and are widely used to prevent large gradients \citep{pascanu2013difficulty, ginsburg2019stochastic}. %
There are many attempts in the literature trying to theoretically explain such differences.
Some works suggest that the advantage of adaptive algorithms is to achieve order-optimal rates without knowledge about the problem parameters, such as smoothness and variance of the noise \citep{ward2020adagrad, levy2021storm+, kavis2019unixgrad}. Another line of works investigates the ability of adaptive methods to escape saddle points faster \citep{levy2016power, murray2019revisiting, xie2022adaptive}. There are also works that  suggest the importance of taking the sign for each coordinate in Adam, which can be considered element-wise normalization \citep{balles2018dissecting,anonymous2023heavytailed}. In addition, \citet{zhang2020adaptive} study the influence of heavy-tail noise on the performance of adaptive methods, and \citet{zhou2020towards} and \citet{wang2021implicit} compare the generalization performance of non-adaptive and adaptive schemes. However, most of the previous works do not provide an explanation for the faster convergence of adaptive methods in the aspect of sample complexity. Notably, \citet{zhang2019gradient} and \citet{wang2022provable} explain the benefits of gradient clipping and Adam by analyzing them under a relaxed smoothness assumption, the setting where SGD with non-adaptive stepsizes may not even converge. Our paper is different from the recent work \citet{wang2022provable} in that we analyze SGD and several adaptive methods under the standard smoothness and noise assumptions, while they focus on the random-shuffling variant of Adam for finite-sum problems with individual relaxed smoothness. %

\section{Problem Setting}
\begin{algorithm}[ht] 
    \caption{Stochastic Gradient Descent (SGD)}
    \setstretch{1.23}
    \begin{algorithmic}[1]
      \STATE \textbf{Input:} initial point $x_0$
        \FOR{$t = 0,1,2,...$}
            \STATE sample  $\xi_t$ and set stepsize $\eta_t$
            \STATE  $x_{t+1} = x_t - \eta_t g(x_t; \xi_t)$
        \ENDFOR
    \end{algorithmic} \label{alg:sgd}
\end{algorithm}

Throughout this work, we focus on minimizing an $\ell$-smooth function $f: \mathbb{R}^d \rightarrow \mathbb{R}$.  We %
have access to a stochastic gradient oracle that returns $g(x; \xi)$ at any point $x$,
and we make the following standard assumptions in nonconvex optimization.

\begin{assumption}[smoothness] \label{assume:smoothness}
  Function $f(x)$ is $\ell$-smooth with $\ell > 0$, that is, for
  $\forall x_1, x_2 \in \bR^{d}$,
  \[
    \|\nabla f(x_1) - \nabla f(x_2) \| 
    \leq \ell \norm*{x_1 - x_2} .
  \]
\end{assumption}

\begin{assumption}[stochastic gradients]
\label{assume:stochastic_grad} 
The stochastic gradient $g(x; \xi) $ is unbiased and has a bounded variance, that is, for any $x \in \bR^{d}$, 
$$
\Ep[\xi]{g(x; \xi)} = \nabla f(x), \quad \Ep[\xi]{\norm*{g(x; \xi) - f(x)}^2} \leq \sigma^2.
$$
\end{assumption}

We present the general scheme of Stochastic Gradient Descent in Algorithm \ref{alg:sgd} with initial point $x_0$ and a stepsize sequence $\{\eta_t\}_{t=0}^\infty$. Some commonly used stepsizes include polynomially and geometrically decaying stepsize, constant stepsize, cosine stepsize, etc. When the stepsize depends on the instantaneous or past gradients, i.e., $\{ g(x; \xi_k)\}_{k\leq t}$, we call it adaptive stepsize, namely Normalized SGD \citep{hazan2015beyond}, AdaGrad \citep{duchi2011adaptive}, Adam \citep{kingma2015adam}, AMSGrad~\citep{reddi2018convergence}, etc.
We also assume $f(x_0) - \min_{x\in \mathbb{R}^d} f(x) \leq \Delta$.
Since the function class we consider is nonconvex, we aim  to find  an $\epsilon$-stationary point $x$ with $\mathbb{E}[\norm*{\nabla f(x)}] \leq \epsilon$.

\section{Convergence of Untuned SGD}
\label{sec:sgd}
In this section, we focus on SGD with the decaying stepsize:
\begin{equation*}
\eta_t = \frac{\eta}{\sqrt{t+1}},
\end{equation*}
where $\eta > 0$ is the initial stepsize. Most convergent analysis  requires  $\eta < 2/\ell$ \citep{ghadimi2013stochastic, bottou2018optimization} so that there is ``sufficient decrease" in function value after each update,  and if $\eta$ is carefully chosen, it can achieve the near-optimal complexity of $\widetilde{\cO}(\ell \epsilon^{-4}\sigma^2)$ \citep{arjevani2022lower}.  Nevertheless, as the smoothness parameter is usually unknown, providing guarantees with optimal $\eta$ or assuming $\eta$ to be problem-dependent does not give enough insights into practical training with SGD. Hence we are interested in its convergence behavior in both small and large initial stepsize regimes, i.e., $\eta \leq 1/\ell$ and $\eta >1/ \ell$.

\begin{theorem}
\label{thm:unbounded}
Under Assumptions \ref{assume:smoothness} and \ref{assume:stochastic_grad}, if we run SGD with stepsize $\eta_t = \frac{\eta}{\sqrt{t + 1}}$, where $\eta > 0$,  
\begin{equation*}
\begin{split}
\frac{1}{T}\sum_{t=0}^{T-1} \mathbb{E}  \norm*{\nabla f(x_t)}^2  \leq  
    \begin{dcases}
         \frac{2A}{\eta\sqrt{T}} , &\text{ when } \eta \leq 1/\ell, \\
         \frac{4\sqrt{2} \ell A(4e)^{\tau}}{\sqrt{\pi T}}, &\text{ when } \eta > 1/\ell, \\
    \end{dcases}
\end{split}
\end{equation*}
where  $\tau = \lceil \eta^2\ell^2 - 1\rceil$ and $A = \left(\Delta +  \frac{\ell\sigma^2\eta^2}{2}(1+\log T)\right)$.
\end{theorem}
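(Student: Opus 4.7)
The plan is to apply the standard $\ell$-smoothness descent lemma in expectation,
\[
\mathbb{E}\bigl[f(x_{t+1}) - f^* \mid x_t\bigr] \leq f(x_t) - f^* - \eta_t \bigl(1 - \tfrac{\ell\eta_t}{2}\bigr)\|\nabla f(x_t)\|^2 + \tfrac{\ell\eta_t^2 \sigma^2}{2},
\]
and split into two regimes. When $\eta \leq 1/\ell$, $\ell\eta_t \leq 1$ at every step so the gradient coefficient is at least $\eta_t/2$; telescoping from $0$ to $T-1$, using $\eta_t \geq \eta/\sqrt{T}$ on the LHS and $\sum_t \eta_t^2 \leq \eta^2(1+\log T)$ on the RHS, directly delivers $2A/(\eta\sqrt{T})$.

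The substantive case is $\eta > 1/\ell$, for which I split the horizon at $\tau = \lceil \eta^2\ell^2 - 1\rceil$, the first index at which $\eta_\tau \leq 1/\ell$. In the unstable phase $t < \tau$, write $F_t := \mathbb{E}[f(x_t) - f^*]$ and invoke the $\ell$-smoothness consequence $\|\nabla f(x_t)\|^2 \leq 2\ell F_t$. Depending on the sign of $\ell\eta_t/2 - 1$, I either substitute the upper bound (when positive, yielding a $(1-\ell\eta_t)^2$ amplification factor) or drop the non-positive gradient term. Both cases are majorized by the uniform recursion
\[
F_{t+1} \leq (1 + \ell\eta_t)^2 F_t + \tfrac{\ell\eta_t^2\sigma^2}{2}.
\]
Because $\ell\eta_t \geq 1$ throughout Phase 1, I sharpen $(1+\ell\eta_t)^2 \leq (2\ell\eta_t)^2 = 4\ell^2\eta^2/(t+1)$. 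Unrolling produces a product $(4\ell^2\eta^2)^\tau/\tau!$, and Stirling's $\tau! \geq \sqrt{2\pi\tau}(\tau/e)^\tau$ together with $\ell^2\eta^2 \leq \tau + 1$ collapses this to $O((4e)^\tau/\sqrt{\tau})$. The noise summation is a geometric series whose term ratio is $(s+1)/(4\ell^2\eta^2) \leq 1/4$, so the $s=0$ term dominates and the total noise is bounded by the same $(4e)^\tau/\sqrt{\tau}$ scaling, giving $F_\tau \leq O((4e)^\tau A/\sqrt{\tau})$.

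In the stable phase $\tau \leq t < T$ the easy-regime bound applies with $F_\tau$ as the effective initial gap: $\sum_{t=\tau}^{T-1} \eta_t \mathbb{E}\|\nabla f(x_t)\|^2 \leq 2F_\tau + \ell\sigma^2\eta^2(1+\log T)$. Dividing by $T$ and lower-bounding $\eta_t \geq \eta/\sqrt{T}$, this contributes $O(\ell(4e)^\tau A / \sqrt{T})$ to the averaged squared gradient, after using $1/\eta < \ell$ to simplify constants. The Phase 1 iterates contribute at most $\frac{2\ell}{T} \sum_{t < \tau} F_t = O(\ell^2 \eta(4e)^\tau A / T)$ via $\|\nabla f(x_t)\|^2 \leq 2\ell F_t$, which is subdominant for large $T$. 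Summing produces the bound in the theorem statement.

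The main obstacle is extracting the precise $(4e)^\tau$ constant rather than a much larger exponential: a direct $1+x \leq e^x$ bound yields $\prod(1+\ell\eta_t)^2 \leq \exp(4\ell^2\eta^2) = e^{O(\tau)}$ but with a significantly larger base. The improvement relies on the regime-specific estimate $(1+\ell\eta_t)^2 \leq 4\ell^2\eta^2/(t+1)$, which exploits $\ell\eta_t \geq 1$ and converts the product to a factorial ratio tamable by Stirling. A secondary point is handling the noise accumulation in Phase 1 --- concretely, verifying that its geometric-ratio structure prevents it from accruing an additional $\tau$ factor beyond the initialization term.
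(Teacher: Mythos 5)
Your proposal follows essentially the same route as the paper's proof (Theorem~\ref{thm:general_stepsize} specialized to $\alpha=1/2$): the same descent lemma, the same phase split at $\tau$, the same use of $\|\nabla f(x)\|^2 \le 2\ell(f(x)-f^*)$ to convert Phase~1 into a multiplicative recursion whose product collapses to $(4e)^\tau/\sqrt{\tau}$ via a factorial and Stirling, and the same telescoping in Phase~2 with $F_\tau$ as the effective initial gap. The only deviation is your majorization $(1+\ell\eta_t)^2 \le 4\ell^2\eta_t^2$ in place of the paper's tighter $1+\ell^2\eta_t^2 \le 2\ell^2\eta_t^2$, which costs a benign extra factor $(1+1/\tau)^\tau \le e$ after Stirling; substituting the tighter factor recovers the stated constant exactly.
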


This theorem  implies that when the initial stepsize $\eta > 1/\ell$, SGD still converges with a sample complexity of $\widetilde{\cO}((\ell^2 + \sigma^4 \eta^4 \ell^4 ) (4e)^{2\eta^2\ell^2}\epsilon^{-4})$ . Although the dependency in the target accuracy $\epsilon$ is near-optimal, it includes a disastrous exponential term in $\eta^2\ell^2$. This is due to  polynomially decaying stepsizes: in the first stage before $\tau = \lceil (\eta\ell)^2 - 1 \rceil$ iterations, the function value and gradients may keep increasing in expectation until reaching an exponential term in $\eta^2\ell^2$, which is in stark contrast with adaptive methods that we will see in  Section \ref{sec:adaptive}; in the second stage after $t \geq \tau$, the stepsize is small enough to decrease the function value in expectation at a rate of $1/\sqrt{T}$ up to a small term in $\sigma$.

If we pick an arbitrary $\eta = \Theta(1)$, untuned SGD may induce large gradients growing exponentially in $\ell$ in the first stage, which matches our observation in Figure~\ref{fig:sgd_vs_adagrad}. On the other hand, deriving the dependence in hyper-parameter $\eta$ is essential for assessing the effort required in its tuning: SGD with $\eta$ that is $c > 1$ times larger than the optimally tuned one can have an $\exp(\text{poly}(c))$ times larger gradient norm in the convergence guarantee.
To the best of our knowledge, there is  limited study for non-asymptotic analysis of untuned SGD under the same assumptions. \citet{moulines2011non} study untuned SGD under individual smoothness and convexity assumptions, i.e., $g(x; \xi)$ is Lipschitz continuous and $F(x; \xi)$ is convex almost surely. They show an $\cO(1/T^{1/3})$ rate, which is suboptimal in the convex case.
Later, \citet{fontaine2021convergence} provide $\cO(1/T^{1/2})$ convergence rate for untuned SGD in the convex setting yet without an explicit dependency in $\ell$ and $\eta$.

\begin{remark}
We focus on the stepsize of the order of $1/\sqrt{t}$, because it is known for SGD to achieve the best dependency in $\epsilon$ for nonconvex optimization \citep{drori2020complexity} and easier to compare with adaptive stepsizes. We also present the convergence results for more general polynomially decaying stepsizes, i.e., $\eta_t = \frac{\eta}{(t + 1)^{\alpha}}$ with $0 < \alpha < 1$, in Theorem \ref{thm:general_stepsize} of the appendix. There exists a trade-off between convergence speed $\cO(1/T^{\frac{1-\alpha}{2}})$ and the exponential term in $(\eta\ell)^{1/\alpha}$ for $\alpha \in [1/2, 1)$. Intuitively, larger $\alpha$ leads to a shorter time in adapting to $1/\ell$ stepsize but a slower convergence rate. We do not consider constant stepsize, i.e., $\alpha = 0$, because it is well known to diverge even in the deterministic setting if the stepsize is agnostic to the problem parameter  \citep{nesterov2003introductory, ahn2022understanding}. 
\end{remark}

The question arises as to whether the exponential term is necessary. In the following, we provide a lower bound for SGD under this choice of stepsize.

\begin{figure}[t]
    \centering
    \includegraphics[width=0.9\linewidth]{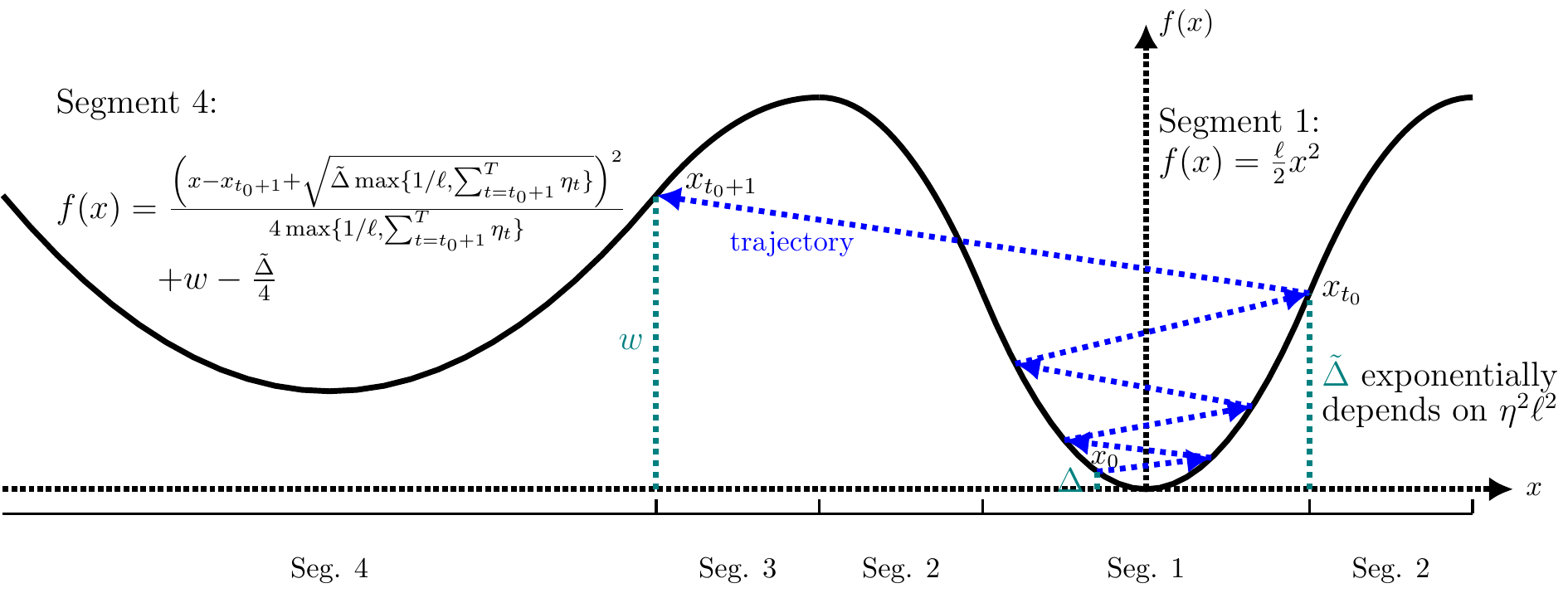}
    \caption{Demonstration of the constructed function for proving the lower bound.}
    \label{fig:lower_bound_demo}
\end{figure}

\begin{theorem} \label{thm:lower_bound}
Fixing $T \geq 1, \eta > 0, \ell>0$ and $\Delta > 0$ that $\eta\ell \geq 5$, there exists a $\ell$-smooth function $f: \mathbb{R} \rightarrow \mathbb{R}$ and an initial point $x_0$ with $f(x_0) - f^* \leq \Delta$ such that if we run Gradient Descent with stepsize $\eta_t = \frac{\eta}{\sqrt{t+1}}$, then for $t\leq t_0 = \left \lfloor{\eta^2\ell^2/16-1}\right \rfloor
$, 
\begin{align*}
     |\nabla f(x_{t})| \geq \sqrt{\frac{2\ell\Delta}{3\sqrt{t}}}\left(8 e \right)^{t/2}   \text{ and }
     |\nabla f(x_{t_0})| \geq \sqrt{\frac{8\Delta}{3\eta}}\left(8 e \right)^{\eta^2\ell^2/32 - 4};
\end{align*}
 if $T > t_0$, then for $t_0 < t \leq T$,
\begin{equation*}
     |\nabla f(x_t)| \geq \frac{1}{4}\sqrt{\Tilde{\Delta}}\min\bigg\{\ell^{1/2}, (2\eta)^{-1/2}T^{-1/4} \bigg\}, \text{ where } \Tilde{\Delta} \geq \frac{4}{3\eta\ell}(8e)^{\eta^2\ell^2/16-2}\Delta.
\end{equation*}
\end{theorem}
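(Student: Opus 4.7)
The plan is to construct, in one dimension, an $\ell$-smooth function $f$ and a starting point $x_0$ with $f(x_0) - f^* \leq \Delta$ that simultaneously witnesses both phases. For the first phase the natural choice is the pure quadratic $f(x) = \ell x^2/2$ with $x_0 = \sqrt{2\Delta/\ell}$, on which the GD update reduces to the scalar recursion $x_{t+1} = (1 - \eta\ell/\sqrt{t+1})\,x_t$, so that $|x_t|$ grows geometrically whenever $\eta\ell/\sqrt{t+1}$ exceeds $2$ with margin. For the second phase I would smoothly glue, beyond a large cutoff, a region in which the function is nearly linear with a carefully chosen constant slope, so that GD with stepsize $\eta/\sqrt{t+1}$ is forced to traverse it and the gradient magnitude stays above the claimed threshold for all $t \leq T$; this is the ``wide basin'' indicated by Figure~\ref{fig:lower_bound_demo}.

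\textbf{First-phase analysis ($t \leq t_0$).} Unrolling the quadratic recurrence yields $|x_t|/|x_0| = \prod_{k=0}^{t-1} |1 - \eta\ell/\sqrt{k+1}|$. The key observation is that for $t \leq t_0 = \lfloor \eta^2\ell^2/16 - 1\rfloor$ one has $\sqrt{k+1} \leq \eta\ell/4$ for every $k < t$, hence $\eta\ell/\sqrt{k+1} \geq 4$ and therefore $|1 - \eta\ell/\sqrt{k+1}| \geq (3/4)\,\eta\ell/\sqrt{k+1}$. Taking the product gives $|x_t|/|x_0| \geq (3\eta\ell/4)^t/\sqrt{t!}$. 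Plugging in Stirling's bound $t! \leq e\sqrt{t}(t/e)^t$ and using $\eta^2\ell^2/(t+1) \geq 16$ inside the first-phase window, this is lower-bounded by a mild $t$-dependent factor times $(8e)^{t/2}$. Multiplying by $\ell$ and invoking $\ell|x_0| = \sqrt{2\ell\Delta}$ produces the claimed $|\nabla f(x_t)| \geq \sqrt{2\ell\Delta/(3\sqrt{t})}\,(8e)^{t/2}$ once the constants are tightened. The bound at $t = t_0$ follows by substituting $t = t_0$, using $\sqrt{t_0} \leq \eta\ell/4$, and collapsing the floor; the hypothesis $\eta\ell \geq 5$ is exactly what supplies slack in these constants and ensures $t_0 \geq 0$.

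\textbf{Second-phase analysis ($t_0 < t \leq T$).} The first-phase bound gives $|x_{t_0}|$ exponential in $\eta^2\ell^2$, so $\tilde\Delta := f(x_{t_0}) - f^* \geq \ell|x_{t_0}|^2/2 \geq (4/(3\eta\ell))(8e)^{\eta^2\ell^2/16 - 2}\Delta$, which is the stated lower bound on $\tilde\Delta$. To lock in a gradient lower bound after $t_0$, I would extend $f$ beyond a cutoff radius $R$ (chosen so that every first-phase iterate still lies in the quadratic piece) into a nearly-linear region of slope $c = \frac{1}{4}\sqrt{\tilde\Delta}\,\min\{\ell^{1/2},(2\eta)^{-1/2}T^{-1/4}\}$, smoothly mollified to keep $f$ globally $\ell$-smooth. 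Inside this slope region the gradient magnitude equals $c$ by construction, and the cumulative displacement of GD over the remaining $T - t_0$ steps is at most $c\,\eta\sum_{t\leq T}1/\sqrt{t+1} \leq 2c\eta\sqrt{T}$; making the slope region long enough, which is consistent with the function-value budget $\tilde\Delta$ since $c^2\eta\sqrt{T} \lesssim \tilde\Delta$ by the very definition of $c$, forces the iterates to remain in the slope through time $T$, hence $|\nabla f(x_t)| \geq c$ for every $t \in (t_0, T]$. The two arms of the $\min$ correspond to the immediate bound $|\nabla f(x_{t_0})| \sim \sqrt{\ell\tilde\Delta}$ and to the $T^{-1/4}$ bound dictated by the descent inequality $\tilde\Delta \geq \sum_{t > t_0}\eta_t|\nabla f(x_t)|^2/2$ once $\eta_t \leq 1/\ell$.

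\textbf{Main obstacle.} The delicate part is producing a single globally $\ell$-smooth $f$ that hosts both the bouncing quadratic of the first phase and the slow slope region of the second phase: the quadratic piece has to extend past the largest first-phase iterate (which is exponentially large), the slope region has to be long enough that iterates cannot escape within $T$ steps, and the mollifier at the junction must preserve $\ell$-smoothness while not washing out the gradient. Ancillary care is also required to track all the $(8e)$, $3/4$, and floor/ceiling constants through Stirling so that the stated exponents $(8e)^{t/2}$ and $(8e)^{\eta^2\ell^2/32-4}$ are matched exactly, which is routine but sensitive.
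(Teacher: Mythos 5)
Your plan follows the paper's proof in all essentials: phase one is the same quadratic $f(x)=\ell x^2/2$ with $x_0=\sqrt{2\Delta/\ell}$, the same product bound $\autoabs{1-\eta\ell/\sqrt{k+1}}\gtrsim \eta\ell/\sqrt{k+1}$ valid while $\eta\ell/\sqrt{k+1}\geq 4$, and Stirling to extract $(8e)^{t/2}$; phase two is the same idea of spending the exponentially inflated value $\Tilde{\Delta}$ on a wide, shallow region calibrated to $\max\{1/\ell,\sum_{t>t_0}\eta_t\}$ so the gradient stays above $\tfrac14\sqrt{\Tilde{\Delta}}\min\{\ell^{1/2},(2\eta)^{-1/2}T^{-1/4}\}$. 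The only cosmetic difference is that you use a mollified linear ramp of slope $c$ where the paper reuses the wide quadratic of Drori--Lan (their Proposition~1, reproduced as Lemma~\ref{lemma:lower_bound_help}), which is essentially linear over the traversed range anyway and comes with the $\ell$-smooth gluing already verified.

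One concrete point in your phase-two bookkeeping would fail as written. You budget the ramp length by "the cumulative displacement over the remaining $T-t_0$ steps is at most $c\eta\sum_t 1/\sqrt{t+1}\leq 2c\eta\sqrt{T}$," but the step taken \emph{at} $t=t_0$ uses the quadratic's gradient $\ell\autoabs{x_{t_0}}$, not $c$: since $\eta\ell/\sqrt{t_0+1}\geq 4$, it throws the iterate to $x_{t_0+1}$ with $\autoabs{x_{t_0+1}}\geq 3\autoabs{x_{t_0}}$, i.e.\ a distance $\geq 2\autoabs{x_{t_0}}$ past your cutoff $R\approx\autoabs{x_{t_0}}$. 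For moderate $\eta\ell\sqrt{T}$ this overshoot exceeds $2c\eta\sqrt{T}$, so a ramp whose length is calibrated only to the post-$t_0$ displacement can be jumped over entirely. The paper resolves exactly this by inserting two bridging quadratic segments (the mirror hump and Segment~3) so that the flat valley is anchored precisely at the landing point $x_{t_0+1}$, with its gradient there set to the target value $c$; you need the analogous anchoring, i.e.\ the ramp must begin at (or be extended to catch) $x_{t_0+1}$ rather than at the edge of the first-phase quadratic. With that adjustment, and the routine constant-chasing you already flag, the argument goes through.
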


This theorem suggests that Gradient Descent with decaying stepsize $\eta/\sqrt{t+1}$ needs at least \\
$\Omega(\eta^{-4}\ell^{-2}(8e)^{\eta^2\ell^2/8} \epsilon^{-4})$ iterations to find an $\epsilon$-stationary point in the large initial stepsize regime. Therefore, it justifies that an exponential term in $\eta^2\ell^2$ multiplied by $1/\sqrt{T}$ is not avoidable even in the deterministic setting. Note that our result is limited to untuned (S)GD with the particular stepsize scheme.  It is worth pointing out that the existing lower bounds for first-order methods~\citep{arjevani2022lower} and SGD \citep{drori2020complexity} do not contain any exponential terms.

We illustrate our hard instance for Theorem \ref{thm:lower_bound} in Figure \ref{fig:lower_bound_demo}, which is one-dimensional. The algorithm starts from a valley of the function $f(x) = \ell x^2 /2$, i.e., Segment 1. Because of the large initial stepsize and steep slope, in the first $t_0$ iterations, Gradient Descent increases the function value as large as $\Tilde{\Delta} = \Omega\left((8e)^{\eta^2\ell^2/16}\Delta \right)$. Then the iterate $x_{t_0 + 1}$ jumps to the top of a very flat valley, i.e., Segment 4, so that Gradient Descent decreases the gradient as slowly as $\Omega(T^{-1/4})$. %

\textit{Why do not we assume gradients to be bounded?} The assumption on bounded gradients is not satisfied even for the simple function $f(x) = \ell x^2/2$. When training neural networks,  gradient explosion is often observed \citep{pascanu2013difficulty,schmidhuber2015deep}, which directly suggests that this assumption is not satisfied or only satisfied with a numerically large constant. In Proposition \ref{lemma:sgd_with_bound_g} in the appendix, we also provide a simple proof for the convergence  under the additional assumption of bounded gradient, i.e., $\|\nabla f(x)\|\leq G$ for all $x$,  attaining a sample complexity of $\widetilde{\cO}(\eta^2\ell^2G^4\sigma^2\epsilon^{-4})$ without any information about problem parameters. However, compared with Theorem \ref{thm:unbounded} and \ref{thm:lower_bound}, constant $G$ hides the exponential term. In Figure \ref{fig:sgd_vs_adagrad}, we observe that the gradient bound along the trajectory of non-adaptive stepsize  can be much larger than that of adaptive stepsize even if starting from the same initial point, so assuming bounded gradient will obscure the difference between them. 

\section{Power of Adaptive Methods}
\label{sec:adaptive}

In this section, we focus on the convergence behaviors of adaptive methods, which adjust their stepsizes based on the observed gradients. In particular, when arriving at a point with a large gradient, adaptive methods automatically decrease their stepsizes to counter the effect of possible gradient increase; to list a few, Normalized SGD \citep{hazan2015beyond}, AdaGrad \citep{duchi2011adaptive}, Adam \citep{kingma2015adam}. Since the analysis for adaptive methods is usually on a case-by-case basis, we will examine three examples -- Normalized SGD,  AMSGrad-norm, and AdaGrad-norm --  to 
establish a universal observation that
they avoid exponential dependency in $\ell$ without tuning.
Although many existing analyses rely on bounded gradients (and function values) or information on problem parameters, we will abandon such assumptions as noted in the previous section. We focus on the norm instead of the coordinate-wise version of adaptive methods, which means each coordinate adopts the same stepsize, because the norm version is usually  dimension-independent in the complexity, and is also widely used in both theory and practice \citep{zhang2018improved, ling2022vectoradam, li2019convergence, leevy2020survey, palfinger2022continuous, kavis2022adaptive}.

 \subsection{Family of Normalized SGD}

Normalized (Stochastic) Gradient Descent \citep{nesterov1984minimization, hazan2015beyond}, referred to as NGD and NSGD, is one of the simplest adaptive methods. It takes the stepsize in  Algorithm~\ref{alg:sgd} to be normalized by the norm of the current (stochastic) gradient:
\begin{equation*}
\eta_t = \frac{\gamma_t}{\|g(x_t; \xi_t)\|},
\end{equation*}
where $\{ \gamma_t\}_{t\geq 0}$ is a sequence of positive learning rate. \citet{cutkosky2020momentum} and \citet{zhao2021convergence} 
show that NSGD with $\gamma_t = \gamma/{\sqrt{T}}$ can find an $\cO(1/\sqrt{T} + \sigma)$-stationary point. 
In order to compare fairly with untuned SGD with decaying stepsize, we present a  modification with decaying $\gamma_t = \gamma/\sqrt{t +1 }$ in NSGD.

\begin{proposition} \label{thm:NSGD}
    Under Assumption~\ref{assume:smoothness} and \ref{assume:stochastic_grad}, if we run NSGD with $\gamma_t = \frac{\gamma}{\sqrt{t + 1}}$, then for any $\gamma > 0$, 
\begin{equation*}
\frac{1}{T} \sum_{t = 0}^{T-1}  \mathbb{E}\|\nabla f(x_t)\| \leq   3 \left(  \frac{\Delta}{\gamma} +   \ell \gamma \log(T) \right) T^{-\nicefrac{1}{2}} + 24 \sigma .
\end{equation*}
\end{proposition}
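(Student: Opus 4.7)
\textbf{Proof plan for Proposition~\ref{thm:NSGD}.}

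The plan is to run the standard descent lemma for the normalized update and then control the inner product $\langle \nabla f(x_t), g(x_t;\xi_t)/\|g(x_t;\xi_t)\|\rangle$ by the true gradient norm modulo a noise term. By $\ell$-smoothness and the NSGD update $x_{t+1} - x_t = -\gamma_t\, g_t/\|g_t\|$ (where $g_t := g(x_t;\xi_t)$), I would first write
\begin{equation*}
 f(x_{t+1}) \;\le\; f(x_t) \;-\; \gamma_t \left\langle \nabla f(x_t),\, \frac{g_t}{\|g_t\|}\right\rangle \;+\; \frac{\ell\, \gamma_t^2}{2}.
\end{equation*}

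The key observation, which replaces the inner product by an actual norm of $\nabla f(x_t)$, is the one-line estimate
\begin{equation*}
 \left\langle \nabla f(x_t),\, \frac{g_t}{\|g_t\|}\right\rangle \;=\; \|g_t\| \;-\; \left\langle g_t - \nabla f(x_t),\, \frac{g_t}{\|g_t\|}\right\rangle \;\ge\; \|g_t\| - \|g_t - \nabla f(x_t)\| \;\ge\; \|\nabla f(x_t)\| - 2\|g_t - \nabla f(x_t)\|,
\end{equation*}
where the last step uses the reverse triangle inequality. Plugging this in and taking conditional expectation, Jensen's inequality gives $\mathbb{E}\|g_t - \nabla f(x_t)\| \le \sigma$, yielding
\begin{equation*}
 \gamma_t\, \mathbb{E}\|\nabla f(x_t)\| \;\le\; \mathbb{E} f(x_t) - \mathbb{E} f(x_{t+1}) \;+\; 2\sigma\, \gamma_t \;+\; \frac{\ell\, \gamma_t^2}{2}.
\end{equation*}

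I would then sum from $t=0$ to $T-1$, telescope the function-value differences (bounded by $\Delta$), and use the standard estimates $\sum_{t=0}^{T-1} \gamma_t \le 2\gamma\sqrt{T}$ and $\sum_{t=0}^{T-1} \gamma_t^2 \le \gamma^2(1+\log T)$. To convert the $\gamma_t$-weighted sum on the left into an unweighted one, I would use $\min_{t < T} \gamma_t = \gamma/\sqrt{T}$, which gives
\begin{equation*}
 \frac{\gamma}{\sqrt{T}}\sum_{t=0}^{T-1} \mathbb{E}\|\nabla f(x_t)\| \;\le\; \Delta + 4\sigma\gamma\sqrt{T} + \frac{\ell \gamma^2(1+\log T)}{2},
\end{equation*}
and dividing by $\gamma\sqrt{T}$ delivers the stated bound (up to the constants already absorbed in the factor $3$ and the $24\sigma$ slack).

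I expect the only conceptually non-routine step to be the alignment inequality between $\nabla f(x_t)$ and $g_t/\|g_t\|$ in the noisy case, which is what forces the additive, non-vanishing $\mathcal{O}(\sigma)$ term that matches the known nonconvergence-to-stationarity behavior of vanilla NSGD (and is addressed in Theorem~\ref{lemma:nonconvergence}). Everything else—converting a $\gamma_t$-weighted sum into an unweighted average and bounding harmonic/square-root sums—is purely mechanical.
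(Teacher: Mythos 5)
Your proof is correct and follows essentially the same route as the paper: the paper invokes Lemma~2 of \citet{cutkosky2020momentum} for the per-step inequality $f(x_{t+1})-f(x_t)\le -\tfrac{\gamma_t}{3}\|\nabla f(x_t)\|+\tfrac{8\gamma_t}{3}\|e_t\|+\tfrac{\ell\gamma_t^2}{2}$ and then telescopes and bounds the sums exactly as you do, whereas you derive the analogous inequality inline (with the sharper constants $1$ and $2$) via the standard alignment estimate. The only blemish, shared with the paper's own proof, is the harmless $T=1$ edge case where $\sum_{t=0}^{T-1}(t+1)^{-1}$ exceeds $\log T$.
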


\paragraph{NGD.} In the deterministic setting, by Proposition \ref{thm:NSGD}, NGD converges to an $\epsilon$-stationary point with a complexity of $\widetilde{\cO}((\gamma^{-2}+\gamma^2\ell^2)\epsilon^{-2})$ for any $\gamma>0$, which importantly does not include any exponential term. Thus, even if the initial stepsize is not small enough, it does not result in a catastrophic gradient explosion.

\paragraph{NSGD.}
In the stochastic setting, Proposition \ref{thm:NSGD} implies that NSGD can find an $\epsilon$-stationary point only when the noise variance is small enough, i.e., $\sigma \leq \cO(\epsilon)$. This is not the consequence of a loose analysis. 
 \citet{hazan2015beyond} show that NSGD with constant $\gamma_t \equiv \gamma$ does not converge when the mini-batch size is smaller than $\Theta(\epsilon^{-1})$ for a non-smooth convex function. Here we provide a non-convergence result in the gradient norm with a smooth objective for all uniformly bounded stepsizes.
 The intuition behind this is illustrated in Figure \ref{fig:nsgd_diverge} in the appendix, where $\mathbb{E}_\xi \ g(x; \xi)/\|g(x; \xi)\|$ can easily vanish or be in the opposite direction of $\nabla f(x)$ under some noises.

\begin{theorem} \label{lemma:nonconvergence}
Fixing $\ell > 0$, $\sigma > 0$, $\epsilon > 0$, $\Delta > 0$ and stepsize sequence $\{\gamma_t\}_{t=0}^\infty$ with $\gamma_t \leq \gamma_{\max} $ that %
$\epsilon^2 < \min\{\sigma^2, 2\ell\Delta,  2\Delta(\sigma - \epsilon) / \gamma_{\max} \}$, there exists an $\ell$-smooth convex function $f$, initial point $x_0$ with $ f(x_0) - \min_x f(x) \leq \Delta$ and zero-mean noises with $\sigma^2$ variance such that the output from  NSGD satisfies $\mathbb{E}\|\nabla f(x_t)\| \geq \epsilon$ for all $t$.
\end{theorem}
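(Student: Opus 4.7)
The plan is to construct an explicit one-dimensional bad instance that exploits the insensitivity of the normalization operator $g \mapsto g/|g|$ to the true gradient direction when the stochastic noise dominates. I would take $f(x) = \ell x^2/2$ (convex, $\ell$-smooth, $f^* = 0$) and the initial point $x_0 = \epsilon/\ell$, so that $f(x_0) - f^* = \epsilon^2/(2\ell) \leq \Delta$ by the hypothesis $\epsilon^2 < 2\ell\Delta$, while $|\nabla f(x_0)| = \epsilon$ gives the base case. For the oracle I would set $g(x;\xi) = \nabla f(x) + \xi$ with $\xi$ uniform on $\{-\sigma, +\sigma\}$, which is zero-mean with variance $\sigma^2$.

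The crucial structural observation is this: whenever $|\ell x_t| < \sigma$ (which holds at $x_0$ since $\epsilon < \sigma$), the two realizations $g(x_t;\xi) = \ell x_t \pm \sigma$ have opposite signs, so $g/|g| = \pm 1$ each with probability $1/2$, yielding $\mathbb{E}[g(x_t;\xi)/|g(x_t;\xi)| \mid x_t] = 0$. Inside this ``strip'' $\{|x| < \sigma/\ell\}$ the NSGD iterate degenerates to a symmetric random walk $x_{t+1} = x_t \pm \gamma_t$, completely oblivious to $\nabla f(x_t)$; outside the strip, $g$ has the deterministic sign of $x_t$ and NSGD takes a $\gamma_t$-step toward zero. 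The dynamics thus reduce to a reflected random walk around the strip, and the third hypothesis $\gamma_{\max} \epsilon^2 < 2\Delta(\sigma - \epsilon)$ controls how far a single NSGD step can displace the iterate relative to the strip width.

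Since $|\nabla f(x_t)| = \ell|x_t|$, it suffices to establish $\mathbb{E}|x_t| \geq \epsilon/\ell$ for all $t$, which I would prove by induction. Conditioning on $x_t = y$, a direct case analysis yields $\phi(y) := \mathbb{E}[|x_{t+1}| \mid x_t = y] = \max(|y|, \gamma_t)$ inside the strip and $\phi(y) = |y| - \gamma_t$ outside it. In the strip, the martingale identity $\mathbb{E}[x_{t+1}\mid x_t] = x_t$ combined with Jensen's inequality $\mathbb{E}|x_{t+1}| \geq |\mathbb{E} x_{t+1}|$ propagates the bound $\mathbb{E}|x_t| \geq x_0 = \epsilon/\ell$; in the reflecting regime, the stepsize hypothesis is invoked to ensure that the ``leakage'' from a reflecting step, $\phi(y) = |y| - \gamma_t \geq \sigma/\ell - \gamma_{\max}$, remains above $\epsilon/\ell$ on average.

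The hard part will be the interaction between the two regimes at the strip boundary: the pointwise bound $\phi(y) \geq \epsilon/\ell$ does not hold uniformly in $y$ (it fails, for instance, when $|y| < \epsilon/\ell$ inside the strip with $\gamma_t < \epsilon/\ell$), so the induction must be carried out at the level of $\mathbb{E}|x_t|$ rather than pointwise. This forces one to split $\mathbb{E}|x_{t+1}| = \mathbb{E}[\phi(x_t)\mathbf{1}_{\text{strip}}] + \mathbb{E}[\phi(x_t)\mathbf{1}_{\text{reflect}}]$, exploit the martingale structure inside the strip to pin $\mathbb{E} x_t$ at $x_0$, and control the probability of reaching the reflecting region (e.g., via Markov's inequality) so that the leakage term is absorbed precisely by the stepsize hypothesis in the theorem.
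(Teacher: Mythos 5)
Your construction is, at its core, the same as the paper's: a one\hyp{}dimensional quadratic, noise that flips the sign of the stochastic gradient with probability $1/2$ exactly where $|\nabla f|$ is below the noise level, so that NSGD degenerates to a symmetric $\pm\gamma_t$ walk inside a strip and drifts deterministically inward outside it, with the identity $\tfrac12(|a-\gamma|+|a+\gamma|)=\max(|a|,\gamma)$ showing the conditional expected gradient norm cannot decrease inside the strip. Two concrete things go wrong in your instantiation. First, you pin the curvature at $\ell$, the strip at $\{|x|<\sigma/\ell\}$, and the start at $x_0=\epsilon/\ell$, whereas the paper chooses a curvature $L\le\ell$ with $L<(\sigma-\epsilon)/\gamma_{\max}$, a strip half-width $w$ with $\epsilon/L+\gamma_{\max}<w<\sigma/L$, and $x_0$ strictly above $\epsilon/L$. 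That slack is exactly what guarantees that one inward step taken from outside the strip still lands at $|x|>\epsilon/L$. With your choices, the hypothesis $\epsilon^2<2\Delta(\sigma-\epsilon)/\gamma_{\max}$ does \emph{not} imply $\ell\gamma_{\max}<\sigma-\epsilon$ (it involves $\Delta$, which may be arbitrarily large), so the out-of-strip update $x-\gamma_t\,\mathrm{sign}(x)$ has $|x_{t+1}|=\bigl||x_t|-\gamma_t\bigr|$, which can overshoot past the origin and make the gradient arbitrarily small; your formula $\phi(y)=|y|-\gamma_t$ and the whole ``reflection'' picture are only valid under the slack condition you have discarded.

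Second, the step you yourself flag as ``the hard part'' is a genuine gap, and the repair you sketch does not close it. The process $x_t$ is a martingale only up to the first exit time from the strip (outside, the increment is $-\gamma_t\,\mathrm{sign}(x_t)$, a sign-dependent drift), so ``pinning $\mathbb{E}x_t$ at $x_0$'' is unavailable once exits can occur; optional stopping controls only $\mathbb{E}|x_{t\wedge T}|$, not $\mathbb{E}|x_t|$; and once the law of $x_t$ is close to symmetric about the origin, Jensen's bound $\mathbb{E}|x_t|\ge|\mathbb{E}x_t|$ is vacuous. More fundamentally, the boundary leakage is not a one-shot event that a single Markov-inequality term can absorb: every visit to the boundary loses a little, and the confined symmetric walk relaxes toward a spread-out law on the strip whose mean absolute value is on the order of half the strip half-width. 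Since you start with zero slack ($\mathbb{E}|x_0|=\epsilon/\ell$ exactly), no per-step bookkeeping of the kind you describe can deliver $\mathbb{E}|x_t|\ge\epsilon/\ell$ uniformly in $t$. To complete the argument you must build slack into the instance itself --- the freedom in $L$, $w$, and $x_0$ that the paper's proof exploits and that the theorem's three hypotheses are designed to permit --- rather than try to recover it analytically from the bare martingale-plus-Jensen estimate.
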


This theorem implies that fixing function class ($\ell, \Delta, \sigma$) and any sequence $\{\gamma_t\}_t$ uniformly upper bounded by $\gamma_{\max}$, NSGD cannot converge to an arbitrarily small $\epsilon$. Specifically, the expected gradient norm will stay larger than $\min\{\sigma, \sqrt{2\ell\Delta}, \gamma_{\max}^{-1}(-\Delta+\sqrt{\Delta^2 + 2\Delta\gamma_{\max}\sigma})\}$. Most $\{\gamma_t\}_t$ used in practice is upper bounded, e.g., constant or decreasing sequences.
The condition $\epsilon^2 < 2\ell\Delta$ is necessary by noting that $\|\nabla f(x_0)\|^2 \leq 2\ell [f(x_0) - \min_xf(x)] \leq 2\ell\Delta$. Considering $\gamma_t = 1/\sqrt{t+1} $, when $\Delta \geq \sigma$ and $\sqrt{2\ell\Delta} \geq \sigma$, it matches with Proposition \ref{thm:NSGD} where NSGD can only converge to a $\Theta(\sigma)$-stationary point.  Since Sign-SGD and NSGD coincide in one-dimensional objectives, our non-convergent example also applies to Sign-SGD. It sheds light on why increasing batch size improves Normalized and Sign-SGD \citep{zhao2021convergence,anonymous2023heavytailed}. However, they are generally different in higher dimensions,
 and \citet{karimireddy2019error} show that sign-SGD may not converge even with full-batch. 

\paragraph{NSGD with momentum.} 
While NSGD may not always converge, \citet{cutkosky2020momentum} introduced NSGD with momentum (NSGD-M) presented in Algorithm \ref{alg:nsgdm} with constant $\gamma_t \equiv \gamma$. We provide the following modification with diminishing $\gamma_t$ that eliminates the need to specify the total number of runs beforehand.

\begin{floatalgo}
\begin{minipage}[H]{0.46\textwidth}
\begin{algorithm}[H] 
\setstretch{1.47}
    \caption{NSGD-M}
    \begin{algorithmic}[1]
      \STATE \textbf{Input:} initial point $x_0$,  stepsize sequence $\{\gamma_t\}$, momentum sequence $\{\alpha_t\}$, and initial momentum $g_0 $.
        \FOR{$t = 0,1,2,...$}
            \STATE  $x_{t+1} = x_t -  \frac{\gamma_t}{\|g_t\|} g_t$
            \STATE sample  $\xi_{t+1}$
            \STATE  $g_{t+1} = (1-\alpha_{t}) g_{t} + \alpha_{t} g(x_{t+1}; \xi_{t+1})$
        \ENDFOR
    \end{algorithmic} \label{alg:nsgdm}
\end{algorithm}
\end{minipage}
\hfill
\begin{minipage}[H]{0.52\textwidth}
\begin{algorithm}[H] 
    \caption{AMSGrad-norm}
    \begin{algorithmic}[1]
      \STATE \textbf{Input:} initial point $x_0$, momentum parameters $0 \leq \beta_1 < 1$ and $0 \leq \beta_2 \leq 1$, stepsize sequence $\{\gamma_t\}$
      and initial momentum $m_0$ and $v_0 > 0$.
      \STATE $\hat v_0 = v_0$
        \FOR{$t = 0,1,2,...$}
            \STATE sample  $\xi_{t}$
            \STATE $m_{t+1} = \beta_1 m_t + (1 - \beta_1) g(x_t; \xi_t)$
            \STATE $v^2_{t+1} = \beta_2 v^2_t + (1 - \beta_2) \|g(x_t; \xi_t)\|^2$
            \STATE $\hat v^2_{t+1} = \max\{\hat v^2_t, v^2_{t+1}\}$
            \STATE  $x_{t+1} = x_t - \frac{\gamma_t}{\sqrt{\hat v^2_{t+1}}} m_{t+1}$
        \ENDFOR
    \end{algorithmic} \label{alg:amsgrad_full}
\end{algorithm}
\end{minipage}
\end{floatalgo}

\begin{proposition}\label{thm:nsgdm}
Under Assumptions \ref{assume:smoothness} and \ref{assume:stochastic_grad}, if we run NSGD-M with $\alpha_t = \frac{\sqrt{2}}{\sqrt{t+2}}$ and $\gamma_t = \frac{\gamma}{(t + 1)^{3/4}}$, then for any $\gamma > 0$, %
\begin{equation*}
\frac{1}{T} \sum_{t = 0}^{T-1}  \mathbb{E}\|\nabla f(x_t)\| \leq \frac{ C \left(  \frac{\Delta}{\gamma} +  (\sigma  + \ell \gamma) \log(T) \right) }{T^{\frac{1}{4}}} ,
\end{equation*}
where $C > 0$ is a numerical constant. 
\end{proposition}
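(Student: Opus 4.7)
The plan is to adapt the momentum-based variance reduction analysis of \citet{cutkosky2020momentum} to the time-varying stepsize $\gamma_t = \gamma/(t+1)^{3/4}$ and momentum weight $\alpha_t = \sqrt{2}/\sqrt{t+2}$ in the statement. Let $e_t := g_t - \nabla f(x_t)$ measure the gap between the momentum vector and the true gradient. Since $\langle g_t/\|g_t\|, \nabla f(x_t)\rangle = \|g_t\| - \langle g_t/\|g_t\|, e_t\rangle \geq \|\nabla f(x_t)\| - 2\|e_t\|$, the descent lemma applied to $x_{t+1} = x_t - \gamma_t g_t/\|g_t\|$ yields
\begin{equation*}
f(x_{t+1}) \leq f(x_t) - \gamma_t \|\nabla f(x_t)\| + 2\gamma_t \|e_t\| + \frac{\ell \gamma_t^2}{2}.
\end{equation*}

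Next I would derive a recursion for $\mathbb{E}\|e_t\|^2$. Adding and subtracting $\nabla f(x_{t+1})$ inside the momentum update gives $e_{t+1} = (1-\alpha_t) e_t + (1-\alpha_t)[\nabla f(x_t) - \nabla f(x_{t+1})] + \alpha_t \zeta_{t+1}$, where $\zeta_{t+1} := g(x_{t+1};\xi_{t+1}) - \nabla f(x_{t+1})$ is conditionally zero-mean with variance at most $\sigma^2$. Using $\|x_{t+1}-x_t\| = \gamma_t$ and $\ell$-smoothness to bound the drift term by $\ell \gamma_t$, and Young's inequality with $\beta = \alpha_t/(1-\alpha_t)$, I obtain
\begin{equation*}
\mathbb{E}\|e_{t+1}\|^2 \leq (1-\alpha_t)\mathbb{E}\|e_t\|^2 + \frac{\ell^2 \gamma_t^2}{\alpha_t} + \alpha_t^2 \sigma^2.
\end{equation*}
The specific choices of $\alpha_t$ and $\gamma_t$ are calibrated so that both inhomogeneous terms are of order $(\sigma^2+\ell^2\gamma^2)/(t+2)$, which together with the contraction factor $1 - \sqrt{2}/\sqrt{t+2}$ permits an induction establishing $\mathbb{E}\|e_t\|^2 \leq C_0(\sigma^2 + \ell^2\gamma^2)/\sqrt{t+2}$ for a numerical constant $C_0$. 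By Jensen's inequality, $\mathbb{E}\|e_t\| \leq \sqrt{C_0}(\sigma + \ell\gamma)/(t+2)^{1/4}$.

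To finish, I would sum the descent inequality from $t=0$ to $T-1$, take expectations, and substitute the error bound. The resulting sums evaluate to: $f(x_0) - \inf f \leq \Delta$; $\sum_t \gamma_t \mathbb{E}\|e_t\| = O\big(\gamma(\sigma+\ell\gamma)\sum_t (t+2)^{-1}\big) = O(\gamma(\sigma+\ell\gamma)\log T)$; and $\sum_t \gamma_t^2 = O(\gamma^2)$ since $\sum (t+1)^{-3/2}$ converges. Lower-bounding the weighted sum by the terminal stepsize $\gamma_t \geq \gamma/T^{3/4}$ converts $\sum_t \gamma_t \mathbb{E}\|\nabla f(x_t)\| \leq B$ into $\frac{1}{T}\sum_t \mathbb{E}\|\nabla f(x_t)\| \leq B/(\gamma T^{1/4})$, which upon substituting $B$ reproduces the claimed rate.

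The main technical obstacle will be the inductive solution of the recursion in the second paragraph: the constants $\sqrt{2}$ and $3/4$ are tuned so that the contraction $(1-\alpha_t)/\sqrt{t+2}$ just barely dominates $1/\sqrt{t+3}$ once the forcing term is absorbed, which requires careful use of $\sqrt{t+3}-\sqrt{t+2} \leq 1/(2\sqrt{t+2})$ to propagate the inductive hypothesis and to select $C_0$. Everything else is bookkeeping; crucially, no bounded-gradient assumption or problem-parameter-dependent tuning of $\gamma$ enters anywhere, which is precisely the intended contrast with untuned SGD in Theorem~\ref{thm:unbounded}.
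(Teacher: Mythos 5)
Your proposal is correct and follows the same overall skeleton as the paper's proof: a descent inequality for the normalized step in terms of $\|\nabla f(x_t)\|$ and the momentum error $e_t = g_t - \nabla f(x_t)$, the identical decomposition $e_{t+1} = (1-\alpha_t)e_t + (1-\alpha_t)\left[\nabla f(x_t)-\nabla f(x_{t+1})\right] + \alpha_t \zeta_{t+1}$, and a final summation with the crude lower bound $\gamma_t \geq \gamma T^{-3/4}$. The one genuine difference is how the error recursion is resolved. The paper bounds the \emph{first} moment $\mathbb{E}\|\hat e_t\|$ by fully unrolling the recursion, splitting off the martingale part via $(\mathbb{E}\|\cdot\|^2)^{1/2}$, and then invoking an external sum--product estimate (Lemma 15 of Fatkhullin et al.) to obtain $\mathbb{E}\|\hat e_t\| \lesssim \sigma\alpha_t^{1/2} + \ell\gamma_t\alpha_t^{-1}$; you instead derive a one-step contraction for the \emph{second} moment, $\mathbb{E}\|e_{t+1}\|^2 \leq (1-\alpha_t)\mathbb{E}\|e_t\|^2 + \ell^2\gamma_t^2/\alpha_t + \alpha_t^2\sigma^2$, close it by induction to get $\mathbb{E}\|e_t\|^2 \lesssim (\sigma^2+\ell^2\gamma^2)/\sqrt{t+2}$, and apply Jensen. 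Both yield the same $\log T$ after weighting by $\gamma_t$; your route is self-contained and more elementary (no external sum--product lemma), while the paper's route generalizes more mechanically to other $(\alpha_t,\gamma_t)$ schedules since the exponents enter only through the cited lemma. Your descent inequality also carries cleaner constants ($-\gamma_t$ and $+2\gamma_t$ rather than the $-\gamma_t/3$, $+8\gamma_t/3$ of the cited Lemma 2), and the derivation you sketch for it is valid. The only detail worth making explicit is the base case: you should state that $g_0$ is an unbiased stochastic gradient at $x_0$ so that $\mathbb{E}\|e_0\|\leq\sigma$ (the paper makes the same implicit assumption), and note that $\alpha_0=1$ makes the induction start cleanly at $t=1$.
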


It implies that NSGD-M attains a complexity of $\widetilde{\cO}((\gamma^{-4}+\gamma^4\ell^4)\epsilon^{-4})$ for any $\gamma > 0$. Compared with Theorem \ref{thm:unbounded} and \ref{thm:lower_bound}, NSGD-M not only achieves near-optimal dependency in the target accuracy $\epsilon$, but also shreds the exponential term when the hyper-parameter is agnostic to smoothness constant.

\subsection{AMSGrad-norm}

AMSGrad was introduced by \citet{reddi2018convergence} to fix the possible non-convergence issue of Adam.  Notably, current analyses of AMSGrad in the stochastic setting show a convergence rate of $\widetilde{\cO}(1/T^{1/4})$, but they rely on the assumption of \textit{bounded stochastic gradients} \citep{chen2019convergence, zhou2018convergence}, which is much stronger than assumptions used for SGD analysis. Here, we examine the simpler norm version of AMSGrad, presented in Algorithm \ref{alg:amsgrad_full}. We prove that without assuming bounded stochastic gradients, AMSGrad-norm with default $\gamma_t = \gamma/\sqrt{t + 1}$ 
may converge at an arbitrarily slow polynomial rate. In fact, this holds even if the true gradients are bounded. We believe this result is of independent interest.

\begin{theorem}
\label{thm:amsgrad_stoc}
    For any $\ell > 0$, $\Delta > 0$, $\sigma > 0$ and $T > 1$, there exists a
    $\ell$-smooth function $f: \mathbb{R}^2 \rightarrow \mathbb{R}^2$, $x_0$ with $f(x_0) - \inf_x f(x) \leq \Delta$ and
    noise distribution $P$ with variance upper bounded by $\sigma^2$, such that if we
    run AMSGrad-norm  with $0 \leq \beta_1 \leq 1$, $0 \leq \beta_2 < 1$ and $\gamma_t = \frac{\gamma}{\sqrt{t+1}}$, we have
    with probability $\frac{1}{2}$, it holds that
\begin{equation*}
        \min_{t \in \{0, 1, ..., T-1\}} \|\nabla f(x_t) \| \geq \sqrt{\frac{\Delta}{16 \max\left\{1/\ell,
        \frac{ \gamma \sqrt{2\Gamma\left(1-\frac{\zeta}{2}\right)} }{\sigma \left(e\left(\frac{1}{\zeta} - 1 \right)\right)^{\frac{\zeta}{2}} (1-\zeta) \sqrt{1-\beta_2}} \left(T^{1-\zeta} - \zeta \right) \right\}}}
\end{equation*}
    for any $\frac{1}{2} < \zeta < 1$, where $\Gamma(\cdot)$ denotes the Gamma function. %
\end{theorem}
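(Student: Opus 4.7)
My plan is to build a two-dimensional hard instance in which the noise affects only a ``dummy'' coordinate and carries a heavy polynomial tail, thereby inflating the AMSGrad scaling factor $\hat v_t$ and throttling progress in the signal coordinate. Take $f(x) = \tfrac{\ell}{2} x_1^2$, which is $\ell$-smooth with $\inf f = 0$, and let $x_0 = (\sqrt{2\Delta/\ell},0)$ so that $f(x_0) - \inf f = \Delta$. Use the oracle $g(x;\xi) = (\ell x_1, \xi)$ where $\xi$ is i.i.d.\ symmetric with a Pareto-type tail $\mathbb{P}(\xi^2 \geq y) = (y_0/y)^{1/\zeta}$ for $y \geq y_0$. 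A direct integration gives $\mathbb{E}[\xi^2] = y_0/(1-\zeta)$, so choosing $y_0 = (1-\zeta)\sigma^2$ satisfies the variance bound. The crucial feature is that $\|\nabla f(x_t)\| = \ell |x_{t,1}|$, so the entire task reduces to lower-bounding $|x_{t,1}|$ uniformly in $t \leq T-1$.

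Initialize $m_0 = 0$. A simple induction shows that as long as $x_{k,1} \geq 0$ for every $k \leq t$, the first coordinate of the momentum satisfies $0 \leq m_{t+1,1} \leq \ell x_{0,1}$ (it is a convex combination of nonnegative gradients $\ell x_{k,1}$), so $x_{t,1}$ is nonincreasing in $t$ and $\min_{t \leq T-1} |x_{t,1}|$ is attained at $t = T-1$. From $v_{k+1}^2 \geq (1-\beta_2)\|g(x_k;\xi_k)\|^2 \geq (1-\beta_2)\xi_k^2$, the max step in AMSGrad yields $\hat v_{t+1}^2 \geq (1-\beta_2)M_{t+1}$ with $M_{t+1} := \max_{k \leq t}\xi_k^2$. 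Telescoping the update for $x_{t,1}$ then gives
\begin{equation*}
    x_{0,1} - x_{T-1,1} \;=\; \sum_{t=0}^{T-2}\frac{\gamma_t\,m_{t+1,1}}{\sqrt{\hat v_{t+1}^2}} \;\leq\; \frac{\gamma\,\ell\,x_{0,1}}{\sqrt{1-\beta_2}}\,\sum_{t=0}^{T-2}\frac{1}{\sqrt{(t+1)\,M_{t+1}}}.
\end{equation*}

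The main technical step is to bound $S_T := \sum_{t=0}^{T-2}\frac{1}{\sqrt{(t+1)M_{t+1}}}$ in expectation. For our distribution, the CDF of $M_{t+1}$ on $[y_0,\infty)$ is $(1-(y_0/y)^{1/\zeta})^{t+1}$, and the substitution $u = (y_0 z^2)^{1/\zeta}$ turns $\mathbb{E}[M_{t+1}^{-1/2}] = \int_0^{1/\sqrt{y_0}}(1-(y_0 z^2)^{1/\zeta})^{t+1}\,dz$ into the Beta integral $\tfrac{\zeta}{2\sqrt{y_0}}B(\zeta/2, t+2)$. Using Gautschi-type inequalities to control $\Gamma(t+2)/\Gamma(t+2+\zeta/2)$, together with the reflection identity $\Gamma(\zeta/2)\Gamma(1-\zeta/2) = \pi/\sin(\pi\zeta/2)$ and the substitution $y_0 = (1-\zeta)\sigma^2$, produces the sharp constant involving $\sqrt{2\Gamma(1-\zeta/2)}/((e(1/\zeta-1))^{\zeta/2})$ that appears inside the max in the theorem. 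Summing the bound $(t+1)^{-(1+\zeta)/2}$ over $t$ yields $\mathbb{E}[S_T] = O(T^{(1-\zeta)/2}/(1-\zeta))$, and Markov's inequality gives $S_T \leq 2\,\mathbb{E}[S_T]$ with probability at least $\tfrac12$. Substituting this into the displacement bound and using $x_{0,1}^2 = 2\Delta/\ell$, one obtains $\min_t\|\nabla f(x_t)\| = \ell x_{T-1,1}$ lower bounded in the stated form: in the regime where the displacement is at most $x_{0,1}/2$, the trivial bound $x_{T-1,1} \geq x_{0,1}/2$ produces the $1/\ell$ entry of the max, while in the complementary regime the $T^{1-\zeta}$ entry dominates.

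\textbf{Main obstacle.} The delicate points are (i) carrying out the moment calculation with sharp constants matching the $\Gamma(1-\zeta/2)$ and $(e(1/\zeta-1))^{\zeta/2}$ factors, which requires careful bookkeeping in the Beta-function reduction and the Gautschi bound, and (ii) ensuring that the induction guaranteeing $0 \leq m_{t+1,1} \leq \ell x_{0,1}$ (on which the monotone decrease of $x_{t,1}$ rests) is valid throughout the event produced by Markov, not merely in expectation. The latter is handled by noting that once $x_{t,1}$ turns negative the gradient lower bound is already enforced in absolute value, so the monotonicity argument only needs to hold until the first potential zero-crossing.
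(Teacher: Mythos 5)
Your overall architecture matches the paper's: a two-dimensional instance with heavy-tailed noise injected only into a dummy coordinate so that $\hat v_{t+1}$ grows polynomially, reducing the problem to a one-dimensional lower bound for momentum-SGD with an upper-bounded stepsize sequence. Your Markov-inequality control of $S_T=\sum_t ((t+1)M_{t+1})^{-1/2}$ is a legitimate alternative to the paper's route (which instead shows $\max_{k\le t}|\xi_k|\ge C(t+1)^{\zeta-1/2}$ \emph{simultaneously for all $t$} via a union bound over Fr\'echet maxima); it would even give a smaller cumulative stepsize, of order $T^{(1-\zeta)/2}$ rather than $T^{1-\zeta}$, though the specific constant $\sqrt{2\Gamma(1-\zeta/2)}/(e(1/\zeta-1))^{\zeta/2}$ in the statement is an artifact of the Fr\'echet CDF and the $1/(4(t+1)^2)$ union-bound threshold, so your Pareto-tail computation will not reproduce it as claimed.

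The genuine gap is in the one-dimensional construction. You fix $f(x)=\tfrac{\ell}{2}x_1^2$ with $x_{0,1}=\sqrt{2\Delta/\ell}$ and bound the total displacement by $\tfrac{\gamma\ell x_{0,1}}{\sqrt{1-\beta_2}}S_T \le \ell x_{0,1}\sum_t\tilde\eta_t$. This only yields $x_{T-1,1}\ge x_{0,1}/2$ when $\sum_t\tilde\eta_t\le 1/(2\ell)$, i.e.\ it covers only the $1/\ell$ branch of the max. In the complementary (and for large $T$, dominant) regime $\sum_t\tilde\eta_t\gg 1/\ell$, your displacement bound exceeds $x_{0,1}$ and gives no information: with a fixed curvature $\ell$ the iterate can land arbitrarily close to the minimizer, where $\|\nabla f(x_t)\|=\ell|x_{t,1}|$ is arbitrarily small, so the asserted bound can fail. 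The remark that ``once $x_{t,1}$ turns negative the gradient lower bound is already enforced in absolute value'' is backwards --- to turn negative the iterate must pass through a neighborhood of $0$ where the gradient is tiny, and the theorem requires a bound on the \emph{minimum} over the trajectory. The paper avoids this exactly by not fixing the curvature: Lemma~\ref{lemma:lower_sgd_momentum} (following Proposition~1 of \citep{drori2020complexity}) takes $F(x)=x^2/\bigl(4\max\{1/\ell,\sum_t\tilde\eta_t\}\bigr)$, so that each step moves the iterate by at most a fraction $\tilde\eta_t/\bigl(2\sum_k\tilde\eta_k\bigr)$ of its distance to the minimizer; these fractions sum to at most $1/2$, guaranteeing $x_t\ge x_0/2$ throughout and hence the stated bound. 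Replacing your fixed quadratic with this $T$-dependent one (which is permissible for a lower bound) repairs the argument.
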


The intuition behind this theroem is that since AMSGrad utilizes the maximum norm of past stochastic gradients with momentum in the denominator of stepsizes, some noise distributions enable this maximum norm to increase polynomially, making the stepsizes too small. However, we can still explore its benefit in the deterministic setting. Whether it converges without assuming bounded gradients, to the best of our knowledge, is unknown. Here, for simplicity, we consider AMSGrad-norm without momentum, i.e., $\beta_1 = \beta_2 = 0$.

\begin{theorem} \label{theorem:ams_grad}
    Under Assumption \ref{assume:smoothness},
    if we run  AMSGrad-norm with $\gamma_t=\frac{\gamma}{\sqrt{t+1}}$, $v_0 > 0$ and $\beta_1 = \beta_2 = 0$ in the deterministic setting, then
    for any $\gamma > 0$ and $0 < \alpha < 1$, 
    \begin{equation*}
            \frac{1}{T} \sum_{t=0}^{T-1} \|\nabla f(x_t)\| 
         \leq \begin{dcases} \frac{\sqrt{2 \Delta \max\{v_0, \sqrt{2\ell\Delta} \}} }{\sqrt{\gamma} T^{\frac{1}{4}}} , &\text{when} \ \  v_0 < \gamma \ell, \\
         \frac{ \gamma^2 \ell^2 }{v_0^2 \sqrt{T}} + \frac{\sqrt{2(M + \Delta)   \max\{\gamma\ell, \sqrt{2\ell(M+\Delta)} \} }}{\sqrt{\gamma} T^{\frac{1}{4}}}, &\text{when} \ \  v_0 \geq \gamma \ell,
         \end{dcases}
    \end{equation*}
    where $M = \ell\gamma^2 \left( 1+ \log \left( \frac{\ell\gamma}{v_0}\right)\right)$.
\end{theorem}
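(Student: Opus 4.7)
The plan is to drive everything off the $\ell$-smoothness descent inequality along the AMSGrad-norm trajectory, exploiting the self-adaptive property $\hat v_{t+1}\geq\|\nabla f(x_t)\|$. With $\beta_1=\beta_2=0$, the update collapses to $x_{t+1}=x_t-\tfrac{\gamma}{\sqrt{t+1}\,\hat v_{t+1}}\nabla f(x_t)$, where $\hat v_{t+1}=\max\{v_0,\max_{s\leq t}\|\nabla f(x_s)\|\}$ is nondecreasing in $t$. Substituting into smoothness yields
\[
f(x_{t+1})-f(x_t)\leq -\frac{\gamma\|\nabla f(x_t)\|^2}{\sqrt{t+1}\,\hat v_{t+1}}\Bigl(1-\frac{\ell\gamma}{2\sqrt{t+1}\,\hat v_{t+1}}\Bigr),
\]
which is a strict descent step precisely when $\sqrt{t+1}\,\hat v_{t+1}\geq\ell\gamma$. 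The self-bound $\|\nabla f(x_t)\|^2/\hat v_{t+1}^2\leq 1$ also gives the unconditional relation $f(x_{t+1})-f(x_t)\leq-\tfrac{\gamma\|\nabla f(x_t)\|^2}{\sqrt{t+1}\,\hat v_{t+1}}+\tfrac{\ell\gamma^2}{2(t+1)}$.

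I would then case-split on whether the descent condition is guaranteed at $t=0$. Since $\hat v_{t+1}\geq v_0$, the condition fails only on a prefix of iterations with $t+1\leq (\gamma\ell/v_0)^2$, which is empty whenever $v_0\geq\gamma\ell$. In the empty-prefix regime, telescoping the clean descent yields $\sum_{t=0}^{T-1}\|\nabla f(x_t)\|^2/(\sqrt{t+1}\,\hat v_{t+1})\leq 2\Delta/\gamma$; monotonicity of $f$ combined with $\|\nabla f(x_t)\|^2\leq 2\ell\Delta$ then pins $\hat v_T\leq\max\{v_0,\sqrt{2\ell\Delta}\}$. In the nonempty-prefix regime, I would use the crude one-step bound across the transient, summing $\sum_{t+1\leq(\gamma\ell/v_0)^2}\ell\gamma^2/[2(t+1)]\sim M:=\ell\gamma^2\bigl(1+\log(\gamma\ell/v_0)\bigr)$. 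After the transient, clean descent resumes with $\Delta$ replaced by $\Delta+M$, and a fresh smoothness bound on the inflated function-value budget yields $\hat v_T\leq\max\{\gamma\ell,\sqrt{2\ell(M+\Delta)}\}$. The extra $\gamma^2\ell^2/(v_0^2\sqrt T)$ summand should arise from separately accounting for the very first iteration, where $\hat v_1=v_0$ makes the bound $\hat v_{t+1}\leq\hat v_T$ needlessly loose inside the Cauchy--Schwarz sum below.

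The last step is the Cauchy--Schwarz conversion
\[
\sum_{t=0}^{T-1}\|\nabla f(x_t)\|\leq\sqrt{\sum_{t=0}^{T-1}\frac{\|\nabla f(x_t)\|^2}{\sqrt{t+1}\,\hat v_{t+1}}}\cdot\sqrt{\sum_{t=0}^{T-1}\sqrt{t+1}\,\hat v_{t+1}},
\]
bounding the second factor by $\hat v_T\cdot\tfrac{2}{3}(T+1)^{3/2}$ via $\hat v_{t+1}\leq\hat v_T$ and $\sum_{t<T}\sqrt{t+1}\leq\tfrac{2}{3}(T+1)^{3/2}$. Dividing by $T$ turns $T^{3/4}$ into the advertised $T^{-1/4}$ rate, with the case-specific bound on $\hat v_T$ producing either $\max\{v_0,\sqrt{2\ell\Delta}\}$ or $\max\{\gamma\ell,\sqrt{2\ell(M+\Delta)}\}$ inside the square root.

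The hard part will be keeping the slack constant $M$ dependent only on $\gamma\ell/v_0$ and not on $T$: a naive accumulation of $\ell\gamma^2/[2(t+1)]$ over all $T$ iterations would introduce an $\ell\gamma^2\log T$ slack and hence a spurious $\sqrt{\log T}$ factor in both $\hat v_T$ and the final rate. Recovering the sharp $M$ requires restricting the crude estimate to exactly the transient prefix $t+1\leq(\gamma\ell/v_0)^2$ and showing that pure descent prevails on the tail, and then handling the Cauchy--Schwarz so that the transient slack does not re-enter the bound on $\hat v_T$.
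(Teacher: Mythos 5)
Your proposal is correct and follows essentially the same route as the paper's proof of the general statement (Theorem~\ref{thm:amsgrad_general_ss}): identify the transient prefix on which the effective stepsize exceeds $1/\ell$, bound its length by $(\gamma\ell/v_0)^{2}$ using $\hat v_{t+1}\geq v_0$, charge the function-value increase over that prefix to $M=\ell\gamma^2\bigl(1+\log(\ell\gamma/v_0)\bigr)$, observe that descent prevails on the tail because $\sqrt{t+1}\,\hat v_{t+1}$ is nondecreasing, deduce $\hat v_T\leq\max\{\gamma\ell,\sqrt{2\ell(M+\Delta)}\}$ (resp.\ $\max\{v_0,\sqrt{2\ell\Delta}\}$), and convert to a first-power bound. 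Two minor remarks. First, your account of the $\gamma^2\ell^2/(v_0^2\sqrt T)$ summand is not how it arises in the paper: there it comes from separately adding $\sum_{t<\tau}\|\nabla f(x_t)\|^2\leq\tau(\gamma\ell)^2$ because the paper discards the negative first-order term during the transient. In your version you keep that term, so the transient gradients are absorbed into the telescoped sum $\sum_t\|\nabla f(x_t)\|^2/(\sqrt{t+1}\,\hat v_{t+1})\leq 2(\Delta+M)/\gamma$ and the extra summand simply need not appear --- a slightly stronger bound that still implies the stated one. Second, your assignment of the clean bound to $v_0\geq\gamma\ell$ and the $M$-inflated bound to $v_0<\gamma\ell$ matches the logic of the paper's proof ($\tau=0$ is forced exactly when $v_0\geq\gamma\ell$), whereas the theorem as printed labels the two cases the other way around; this appears to be a transposition in the statement rather than an error on your part.
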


The theorem implies that AMSGrad-norm achieves a complexity of $\widetilde{\cO}((\ell^4\gamma^4+\ell^2+\ell^3\gamma^2 + \ell\gamma^{-2})\epsilon^{-4})$ with the default $\gamma_t = \Theta(t^{-1/2})$ \citep{reddi2018convergence, chen2019convergence, guo2021novel}. Compared with untuned Gradient Descent, it gets rid of the exponential dependency. In the proof, we show that before the first iteration $\tau$ when stepsize $\eta_t$ reduces to $1/\ell$, the accumulated gradient norms $\sum_{t=0}^{\tau-1}\left\|\nabla f\left(x_t\right)\right\|^2$ are upper bounded polynomially, which is in striking contrast with SGD in Theorem \ref{thm:lower_bound}. 
We further provide theoretical guarantees for more general schemes
$\frac{\gamma}{(t+1)^{\alpha}}$ with $0 < \alpha < 1$ in Theorem \ref{thm:amsgrad_general_ss} in the appendix. We also derive matching lower bounds  in Theorem~\ref{thm:amsgrad_deter_lower_bound} for any $0 < \alpha < 1$, and justify that AMSGrad may fail to converge with constant $\gamma_t \equiv \gamma$ (i.e., $\alpha = 0$) if the problem parameter is unknown.

\subsection{AdaGrad-norm}

AdaGrad chooses its stepsize to be inversely proportional to the element-wise accumulated past gradients \citep{duchi2011adaptive, mcmahan2010adaptive}. Its norm-version, AdaGrad-norm \citep{streeter2010less, ward2020adagrad}, picks stepsize in (\ref{alg:sgd}) to be 
\begin{equation*}
\eta_t = \frac{\eta}{\sqrt{ v_0^2 + \sum_{k = 0}^t \|g(x_k;\xi_k)\|^2}},
\end{equation*}
where $v_0 > 0$. Very recently, AdaGrad is proven to converge in nonconvex optimization without the assumption on bounded gradients or tuning $\eta$ \citep{Faw_2022_Power_Adapt, yang2022nest}. 
Although the result in \citep{yang2022nest} are presented for minimax optimization problems,  a similar result follows immediately for minimization problems. We present the following result for the completeness of the paper and to further illustrate the benefits of adaptive methods over SGD.

\begin{proposition}
\label{prop:adagrad}
  Under Assumptions \ref{assume:smoothness} and \ref{assume:stochastic_grad}, if we run AdaGrad-norm, then for any $\eta >0$ and $v_0 > 0$,
\begin{equation*}
    \frac{1}{T} \sum_{t = 0}^{T-1}  \mathbb{E}\|\nabla f(x_t)\| \leq \frac{2A}{\sqrt{T}} + \frac{\sqrt{v_0 A}}{\sqrt{T}} + \frac{2\sqrt{A \sigma}}{T^{\frac{1}{4}}},
\end{equation*}
where $ A = \widetilde{\cO}\left( \frac{\Delta}{\eta} + \sigma +  \ell\eta \right)$. 
\end{proposition}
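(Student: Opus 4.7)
The plan is to adapt the unbounded-gradient AdaGrad-norm analysis template of \citep{Faw_2022_Power_Adapt, yang2022nest} and convert the standard $\sum \mathbb{E}\|\nabla f\|^2$ guarantee into the $\sum \mathbb{E}\|\nabla f\|$ bound of the proposition via Cauchy--Schwarz. Let $b_t^2 = v_0^2 + \sum_{k=0}^t \|g(x_k;\xi_k)\|^2$ so that $\eta_t = \eta/b_t$. The starting point is the smoothness descent lemma
\begin{equation*}
    f(x_{t+1}) \le f(x_t) - \eta_t \langle \nabla f(x_t), g(x_t;\xi_t)\rangle + \frac{\ell \eta_t^2}{2}\|g(x_t;\xi_t)\|^2,
\end{equation*}
whose main difficulty is that $\eta_t$ depends on $g(x_t;\xi_t)$ through $b_t$, so taking conditional expectation does not yield $\eta_t \|\nabla f(x_t)\|^2$.

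To decouple the stepsize from the current gradient, I would introduce the surrogate $\tilde\eta_t = \eta/b_{t-1}$ (with $b_{-1}:=v_0$), which is measurable with respect to the history before iteration $t$. Writing $\eta_t = \tilde\eta_t - (\tilde\eta_t - \eta_t)$, the surrogate part produces the ``good'' term $\tilde\eta_t \|\nabla f(x_t)\|^2$ in conditional expectation, while the error part satisfies $|\tilde\eta_t - \eta_t| = \eta\|g(x_t;\xi_t)\|^2/(b_{t-1}b_t(b_{t-1}+b_t))$ and, after Cauchy--Schwarz and Young's inequality, can be absorbed into a fraction of the quadratic descent term up to constants.

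Summing over $t = 0,\dots,T-1$, taking total expectation, and invoking the standard AdaGrad-style telescoping inequalities
\begin{equation*}
    \sum_{t=0}^{T-1}\frac{\|g(x_t;\xi_t)\|^2}{b_t^2} \le \log\frac{b_{T-1}^2}{v_0^2}, \qquad \sum_{t=0}^{T-1} \frac{\|g(x_t;\xi_t)\|^2}{b_t} \le 2(b_{T-1} - v_0),
\end{equation*}
leaves me with a self-bounding estimate of the form $\sum_t \mathbb{E}[\tilde\eta_t\|\nabla f(x_t)\|^2] \lesssim \Delta + \ell\eta\, \mathbb{E}\log b_{T-1}$. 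Lower bounding $\tilde\eta_t \ge \eta/b_{T-1}$ and splitting $\|g(x_t;\xi_t)\|^2 \le 2\|\nabla f(x_t)\|^2 + 2\|g(x_t;\xi_t) - \nabla f(x_t)\|^2$ via Assumption~\ref{assume:stochastic_grad} yields an implicit inequality in $X = \sum_t \mathbb{E}\|\nabla f(x_t)\|^2$ of the form $X \lesssim C + D\sqrt{X + T\sigma^2}$; solving this quadratic-in-$\sqrt{X}$ inequality gives $X = \widetilde{\cO}(\sqrt{T}\cdot A)$ with $A$ as in the statement, and Cauchy--Schwarz then converts to $\frac{1}{T}\sum_t \mathbb{E}\|\nabla f(x_t)\| \le \sqrt{X/T}$, recovering the leading $T^{-1/4}$ rate.

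The hard part will be the self-bounding step: because $b_{T-1}$ depends on the entire stochastic trajectory and sits under an expectation alongside the very gradients that we are bounding, one must split $\|g(x_t;\xi_t)\|^2$ into signal and noise, use Jensen in the correct direction to move between $\mathbb{E}\sqrt{\cdot}$ and $\sqrt{\mathbb{E}\cdot}$, and solve an implicit random inequality without invoking any a priori gradient bound. A secondary bookkeeping task is to arrange the resulting constants so that the dependence on $v_0$, $\eta$, and $\sigma$ separates cleanly into the three contributions $2A/\sqrt{T}$, $\sqrt{v_0 A}/\sqrt{T}$, and $2\sqrt{A\sigma}/T^{1/4}$ stated in the proposition.
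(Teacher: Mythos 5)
Your outline is the standard direct analysis of AdaGrad-norm without bounded gradients, and it is a viable route, but it is genuinely different from (and much longer than) the one the paper takes. The paper's proof of Proposition~\ref{prop:adagrad} is a two-line reduction: it defines $\widetilde f(x,y) = f(x) - \frac{\ell}{2}y^2$, observes that running AdaGrad-norm on $f$ is identical to running NeAda-AdaGrad of \citep{yang2022nest} on $\widetilde f$ with $y_t \equiv 0$ (so the inner-maximization error $\mathcal{E}$ vanishes, the condition number is $\kappa=1$, and the batch size is $M=1$), and then reads the bound off Theorem~3.1 of that paper. All of the machinery you describe --- the history-measurable surrogate stepsize $\tilde\eta_t = \eta/b_{t-1}$, the telescoping sums $\sum_t \|g_t\|^2/b_t^2 \le \log(b_{T-1}^2/v_0^2)$ and $\sum_t \|g_t\|^2/b_t \le 2(b_{T-1}-v_0)$, and the self-bounding inequality in $\sqrt{X}$ --- lives inside the cited theorem rather than in this paper, which is why the result is presented ``for completeness.'' Your plan buys self-containedness at the cost of reproving that theorem. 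If you do carry it out, be aware that the step you dispose of in one sentence, absorbing $(\tilde\eta_t - \eta_t)\langle \nabla f(x_t), g_t\rangle$ into a fraction of the descent term, is where essentially all of the technical difficulty of \citep{Faw_2022_Power_Adapt, yang2022nest} is concentrated: the error term carries $\|g_t\|^3$ over products of the $b$'s, must be split into signal and noise \emph{before} applying Young's inequality (otherwise the constant in front of $\tilde\eta_t\|\nabla f(x_t)\|^2$ cannot be kept below one without a priori gradient bounds), and the noise part feeds back into the same self-bounding recursion you solve at the end; it deserves a full lemma, not a clause.
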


The result implies a complexity of $\widetilde{\cO}\left((\eta^{-2} + \sigma^2 + \eta^2\ell^2)\left( \epsilon^{-2} + \sigma^2\epsilon^{-4}\right)\right)$. 
Notably, if we choose $\eta$ to be $1/\sqrt{\ell}$, it  achieves the optimal complexity in both $\ell$ and $\epsilon$ up to logarithmic terms like well-tuned SGD \citep{arjevani2022lower}. Even if $\eta$ is independent to $\ell$, AdaGrad-norm does not suffer from the exponential term present in untuned SGD. 
One of the intuitions behind this in the deterministic setting, drawn parallel to AMSGrad-norm, is that the accumulated squared gradient norm before the stepsize drops to $1/\ell$  will be upper bounded by a polynomial term (see Theorem 3.2 in \citep{li2022adaptive}). 
Another benefit of AdaGrad over other methods is to achieve optimal convergence rates simultaneously in deterministic and stochastic settings with the same hyper-parameters. This is sometimes described as ``noise adaptivity", which is beyond the scope of this paper.

\section{Conclusion and Future Directions}
In this work, we study stochastic gradient methods for minimizing smooth functions in the parameter-agnostic regime. Firstly, we show SGD with polynomially decaying stepsize $1/\sqrt{t}$ is able to converge with the order-optimal rate, with and without bounded gradients (Proposition~\ref{lemma:sgd_with_bound_g} and Theorem~\ref{thm:unbounded}). Its limitation lies in an unavoidable exponential term in $\ell^2$ when we do not assume bounded gradients (Theorem \ref{thm:lower_bound}). We demonstrate that several existing adaptive methods do not suffer from the exponential dependency, such as NGD, AdaGrad, AMSGrad-norm in the deterministic setting (Proposition \ref{thm:NSGD} and Theorem~\ref{theorem:ams_grad}), and NSGD-M, AdaGrad in the stochastic setting (Proposition \ref{thm:nsgdm} and Proposition \ref{prop:adagrad}). However, it does not mean adaptive methods are always better than SGD. We provide a non-convergence result for NSGD (Theorem \ref{lemma:nonconvergence}) and a slow convergence result for AMSGrad-norm (Thoerem \ref{thm:amsgrad_stoc}) in the stochastic case. A detailed exposition of these results can be found in Appendix~\ref{sec:summary}. We believe our results shed light on explaining commonly observed large gradients during training and provide a better theoretical understanding of the convergence behaviors of adaptive methods in the regime with unbounded stochastic gradients. 

The results can be extended in several directions. Firstly, it is interesting to understand whether similar benefits of adaptive methods persist for the high probability convergence guarantees and extend to other adaptive optimizers. 
Secondly, we emphasize the significance of eliminating the assumption of bounded gradients for more adaptive algorithms. 
Such an assumption can hide the dependence on $\ell$ and obscure the advantage over SGD.
Thirdly, based on our negative results concerning AMSGrad-norm, further exploration of the convergence properties of AMSGrad and its variants becomes interesting. This exploration could involve scenarios where true function gradients are unbounded, but additional assumptions can be made regarding the noise distribution.
Lastly, understanding the impact of adaptive algorithms on the optimization of possibly non-smooth nonconvex objectives, which frequently arise in the training of modern machine learning models, is another intriguing avenue for future research.

\section*{Acknowledgement}
The work is supported by ETH research grant and Swiss National Science Foundation (SNSF) Project Funding No. 200021-207343; Ilyas Fatkhullin is partly funded by ETH AI Center.

\bibliography{ref}
\bibliographystyle{plainnat}

\clearpage

\appendix

\section{Results Summary Table}
\label{sec:summary}
\begin{table*}[h]
		\centering
		\small
		\renewcommand{\arraystretch}{1.5}
  \hspace*{-3em}
  \noindent\adjustbox{max width=1.1\textwidth}{%
  		\begin{threeparttable}[b]
			\begin{tabular}{c | c | c | c | c}
				\hline
				\hline
				\multirow{2}{*}{\textbf{Algorithms}} & 
				\multirow{2}{*}{
					\makecell[c]{
						\textbf{Upper bound;} \vspace{0.1em}\\
						\textbf{deterministic} 
					}
				} & 
				\multirow{2}{*}{
					\makecell[c]{
						\textbf{Lower bound;} \vspace{0.1em}\\
						\textbf{deterministic}
					}
				} 
				&
                    \multirow{2}{*}{
					\makecell[c]{
						\textbf{Upper bound;} \vspace{0.1em}\\
						\textbf{stochastic}
					}
				} 
				&
                    \multirow{2}{*}{
					\makecell[c]{
						\textbf{Lower bound;} \vspace{0.1em}\\
						\textbf{stochastic}
					}
				} 
				\\
				& &  & &
				\\
				\hline 
				\hline
				\multirow{2}{*}{
                    \makecell[c]{
					    SGD (Alg. \ref{alg:sgd}) \vspace{0.1em}\\
						$\eta_t = \frac{\eta}{(t+1)^{\alpha}}$
					}
                    }
				& 
                    \multirow{2}{*}{
					\makecell[c]{
					    $\widetilde{\cO}\left( (4e^{2\alpha})^{\frac{(\eta\ell)^{1/\alpha}}{(1-\alpha)\wedge \alpha}}\epsilon^{\frac{-2}{(1-\alpha)\wedge \alpha}}\right)$ \vspace{-0.2em}\\
					 $\alpha \in (0,1)$	 \ $[$Thm.  \ref{thm:unbounded}, \ref{thm:general_stepsize}$]$
					}
				} 
				& 
                    \multirow{2}{*}{
					\makecell[c]{
						$\Omega\left( (8e)^{\eta^2\ell^2/8}\epsilon^{-4}\right)$ \vspace{0.1em}\\
					$\alpha = 1/2$ \	$[$Thm. \ref{thm:lower_bound}$]$
					}
				} 
				& 
                    \multirow{2}{*}{
					\makecell[c]{
						$\widetilde{\cO}\left( (4e^{2\alpha})^{\frac{(\eta\ell)^{1/\alpha}}{(1-\alpha)\wedge \alpha}}\epsilon^{\frac{-2}{(1-\alpha)\wedge \alpha}}\right)$  \vspace{-0.2em}\\
					   $\alpha \in (0,1)$ \ 	$[$Thm. \ref{thm:unbounded}, \ref{thm:general_stepsize}$]$
					}
				} 
				& 
                    \multirow{2}{*}{
					\makecell[c]{
						$\Omega\left( (8e)^ {\eta^2\ell^2/8}\epsilon^{-4}\right)$ \vspace{0.1em}\\
					$\alpha = 1/2$ \	$[$Thm. \ref{thm:lower_bound}$]$
					}
				} 
				\\
                    & & & &
                    \\
				\hline
				\multirow{2}{*}{
                    \makecell[c]{
						NSGD (Alg. \ref{alg:nsgd}) \vspace{0.1em}\\
						$\eta_t = \frac{\gamma_t}{\| g(x_t; \xi_t)\|}$
					}
                    }
				& 
                    \multirow{2}{*}{
					\makecell[c]{
					    $\widetilde{\cO}\left( \epsilon^{-2}\right)$, $\gamma_t = \frac{\gamma}{\sqrt{t+1}}$\vspace{0.1em}\\
         \citep{cutkosky2020momentum} \&
						$[$Prop. \ref{thm:NSGD}$]$
					}
				} 
				& 
                    \multirow{2}{*}{
					\makecell[c]{
						$\Omega\left( \epsilon^{-2}\right)$ \vspace{0.1em}\\
						\citep{carmon2020lower}
					}
				} 
				& 
                    \multirow{2}{*}{
                    \makecell[c]{
					N/A due to \\ lower bound
                 }
				} 
				& 
                    \multirow{2}{*}{
					\makecell[c]{
						Nonconvergent \vspace{0.1em}\\
						$\forall $ bounded $\{\gamma_t\}$ $[$Thm. \ref{lemma:nonconvergence}$]$
					}
				} 
				\\
                    & & & &
                    \\
				\hline
				\multirow{2}{*}{
                    \makecell[c]{
					    NSGD-M (Alg. \ref{alg:nsgdm}) \vspace{0.1em}\\
						$\eta_t = \frac{\gamma}{(t+1)^{\alpha}\| g_t\|}$
					}
                    }
				& 
                    \multirow{2}{*}{
					\makecell[c]{
					    $\widetilde{\cO}\left( \epsilon^{-2}\right)$, $\alpha = 1/2$ \vspace{0.1em}\\
         \citep{cutkosky2020momentum} \&
						$[$Prop. \ref{thm:NSGD}$]$
					}
				} 
				& 
                    \multirow{2}{*}{
					\makecell[c]{
						$\Omega\left( \epsilon^{-2}\right)$ \vspace{0.1em}\\
						\citep{carmon2020lower}
					}
				} 
				& 
                    \multirow{2}{*}{
					\makecell[c]{
						$\widetilde{\cO}\left( \epsilon^{-4}\right)$, $\alpha = 3/4$  \vspace{0.1em}\\
      \citep{cutkosky2020momentum} \&
						$[$Prop. \ref{alg:nsgdm}$]$
					}
				} 
				& 
                    \multirow{2}{*}{
					\makecell[c]{
						$\Omega\left( \epsilon^{-4}\right)$ \vspace{0.1em}\\
						\citep{arjevani2022lower}
					}
				} 
				\\
                    & & & &
                    \\
				\hline
				\multirow{3}{*}{
                    \makecell[c]{
					   AMSGrad-norm \vspace{0.3em}
                        (Alg. \ref{alg:amsgrad_full})\vspace{0.3em}\\
						 $\eta_t = \frac{\gamma}{(t+1)^{\alpha}\sqrt{\hat v^2_{t+1}}}$
					}
                    }
				& 
                    \multirow{3}{*}{
					\makecell[c]{
					    $\widetilde{\cO}\left( \epsilon^{-2/(1-\alpha)}\right)$, $\alpha \in (0, 1)$ \vspace{0.1em}\\
						$[$Thm. \ref{theorem:ams_grad}, \ref{thm:amsgrad_general_ss}$]$
					}
				} 
				& 
                    \multirow{3}{*}{
					\makecell[c]{
						$\Omega\left( \epsilon^{-2/(1-\alpha)}\right)$, $\alpha \in (0,1)$ \vspace{0.1em}\\
						Nonconvergent, $\alpha = 0$ \vspace{0.1em}\\
                        $[$Thm. \ref{thm:amsgrad_deter_lower_bound}$]$
					}
				} 
				& 
                    \multirow{3}{*}{
                    \makecell[c]{
					N/A due to \\ lower bound
                 }
				} 
				& 
                    \multirow{3}{*}{
					\makecell[c]{
						$\Omega\left( \epsilon^{-2/(1-\zeta)}\right)$, $\alpha = 1/2$ \vspace{0.3em}\\
					$\forall \zeta\in (0.5, 1)$ 	$[$Thm. \ref{thm:amsgrad_stoc}$]$
					}
				} 
				\\
                    & & & &
                    \\
                    & & & &
                    \\
				\hline
                    \multirow{2}{*}{
                    \makecell[c]{
					    AdaGrad-norm \vspace{0.1em}
						(Alg. \ref{alg:adagrad}) \\ $\eta_t = \frac{\eta}{\sqrt{ v_0^2 + \sum_{k = 0}^t \|g(x_k;\xi_k)\|^2}}$
					}
                    }
				& 
                    \multirow{2}{*}{
					\makecell[c]{
					    $\widetilde{\cO}\left( \epsilon^{-2}\right)$ \vspace{0.1em}\\
         \citep{yang2022nest} \&
						$[$Prop. \ref{prop:adagrad}$]$
					}
				} 
				& 
                    \multirow{2}{*}{
					\makecell[c]{
						$\Omega\left( \epsilon^{-2}\right)$  \vspace{0.1em}\\
						\citep{carmon2020lower}
					}
				} 
				& 
                    \multirow{2}{*}{
                    \makecell[c]{
						$\widetilde{\cO}\left( \epsilon^{-4}\right)$  \vspace{0.1em}\\
      \citep{yang2022nest} \&
						$[$Prop. \ref{prop:adagrad}$]$
					}
				} 
				& 
                    \multirow{2}{*}{
					\makecell[c]{
						$\Omega\left( \epsilon^{-4}\right)$ \vspace{0.1em}\\
					\citep{arjevani2022lower}
					}
				} 
				\\
                    & & & &
                    \\
				\hline
				\hline
			\end{tabular}
		\end{threeparttable}
}
  \caption{Comparisons of complexities to find an $\epsilon$-stationary point, i.e., $\mathbb{E} \|\nabla f(x)\| \leq \epsilon$, between SGD, NSGD, NSGD-M, AMSGrad-norm and AdaGrad-norm, with more general stepsize setups than Table~\ref{table:summary_results1}. We only assume $f$ is $\ell$-smooth, and unbiased stochastic gradients have bounded variance $\sigma$. Hyper-parameters (e.g., $\gamma$ and $\eta$) are arbitrary and untuned. In this table, $\cO$ and $\Omega$ hide polynomial terms in problem parameters and hyper-parameters. We use $\eta_t$ to denote the effective stepsize at iteration $t$.
  } 
		\label{table:summary_results2}
	\end{table*}

\section{Proofs for SGD in Section \ref{sec:sgd}}
\label{sec:apdx_Sgd}
\bigskip

\subsection{Upper Bounds for SGD}
We provide an extended theorem of Theorem \ref{thm:unbounded} and include more general decaying stepsizes $\eta_t = \eta/(t+1)^\alpha$ with $0 < \alpha < 1$. 

\begin{theorem}
\label{thm:general_stepsize}
Under Assumptions \ref{assume:smoothness} and \ref{assume:stochastic_grad}, if we run SGD with stepsize $\eta_t = \eta / (t + 1)^{\alpha}$ where $\eta > 0$ and $1/2 \leq \alpha < 1$, then with $\eta \leq 1/\ell$, 
\begin{equation*}
\begin{split}
\Ep{\frac{1}{T}\sum_{t=0}^{T-1} \norm*{\nabla f(x_t)}^2}  \leq \left\{
\begin{aligned}
    &\frac{2}{\eta\sqrt{T}} \left(\Delta +  \frac{\ell\sigma^2\eta^2}{2}(1+\log T)\right), \text{ when } \alpha = 1/2, \\
    &\frac{2}{\eta T^{1-\alpha}} \left(\Delta +  \frac{\ell\sigma^2\eta^2}{2\left(1-2^{1-2\alpha} \right)}\right), \text{ when } 1/2 < \alpha  < 1, \\
    &\frac{2}{\eta T^{\alpha}} \left(\frac{\Delta}{T^{1-2\alpha}} +  \frac{\ell\sigma^2\eta^2}{2\left(1 - 2\alpha \right)}\right), \text{ when } 0 < \alpha  < 1/2;
\end{aligned}
\right.
\end{split}
\end{equation*}
with $\eta > 1/\ell$, 
\begin{equation*}
\begin{split}
\Ep{\frac{1}{T}\sum_{t=0}^{T-1} \norm*{\nabla f(x_t)}^2}  \leq \left\{ 
\begin{aligned}
&\frac{\sqrt{2} \ (4e)^{\tau}}{\eta\sqrt{\pi\tau T}}\left[1 + \ell\eta\left(1+2\sqrt{\tau}\right) \right]\left( \Delta + \frac{\ell\sigma^2\eta^2}{2}(1+\log T)\right),  \\ & \qquad \text{ when } \alpha = 1/2, \\
&\frac{2 \ (4e^{2\alpha})^{\tau}\ }{\eta(2\pi\tau)^\alpha \ T^{1-\alpha}}\left[1 + \ell\eta\left(1 + \frac{\tau^{1-\alpha} }{1-\alpha}\right) \right]\left( \Delta + \frac{\ell\sigma^2\eta^2}{2\left(1-2^{1-2\alpha} \right)}\right), \\ & \qquad  \text{ when } 1/2 < \alpha < 1,  \\
&\frac{2 \ (4e^{2\alpha})^{\tau}\ }{\eta(2\pi\tau)^\alpha \ T^{\alpha}}\left[1 + \ell\eta\left(1 + \frac{\tau^{1-\alpha} }{1-\alpha}\right) \right]\left( \frac{\Delta}{T^{1-2\alpha}} + \frac{\ell\sigma^2\eta^2}{2\left(1-2\alpha \right)}\right),  \\ & \qquad \text{ when } 0 < \alpha < 1/2,
\end{aligned}
\right.
\end{split}
\end{equation*}
where  $\tau = \lceil (\eta\ell)^{1/\alpha} - 1\rceil$.
\end{theorem}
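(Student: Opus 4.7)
The plan is to derive every bound from the single one-step descent inequality
\[
\mathbb{E}[f(x_{t+1})] \leq \mathbb{E}[f(x_t)] - \eta_t\left(1 - \tfrac{\ell\eta_t}{2}\right)\mathbb{E}\|\nabla f(x_t)\|^2 + \tfrac{\ell\eta_t^2\sigma^2}{2},
\]
which follows from $\ell$-smoothness and the unbiasedness plus bounded variance of $g(x;\xi)$. Writing $\phi_t := \mathbb{E}[f(x_t) - \inf f]$, I split into the small-stepsize regime $\eta \leq 1/\ell$ and the large-stepsize regime $\eta > 1/\ell$, and in each regime handle the three ranges of $\alpha$ by a common template: a lower bound $\eta_t \geq \eta/T^\alpha$ to pull a minimum stepsize out of the left side, and an $\alpha$-dependent estimate of the noise sum $\sum_{t=0}^{T-1}\eta_t^2 = \eta^2\sum (t+1)^{-2\alpha}$ on the right.

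For the small-stepsize regime, each $\eta_t \leq 1/\ell$ gives $1-\ell\eta_t/2 \geq 1/2$, so telescoping and dividing by $\eta/(2T^\alpha)$ yields the weighted gradient average $\leq (2T^\alpha/\eta)(\Delta + \tfrac{\ell\sigma^2}{2}\sum\eta_t^2)$. The noise sum is bounded case by case: $1+\log T$ for $\alpha = 1/2$, a dyadic geometric series giving $1/(1-2^{1-2\alpha})$ for $\alpha > 1/2$, and the integral estimate $T^{1-2\alpha}/(1-2\alpha)$ for $\alpha < 1/2$. Substituting each yields the three bounds in the theorem under $\eta \leq 1/\ell$.

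For the large-stepsize regime, define $\tau := \lceil(\eta\ell)^{1/\alpha}-1\rceil$, the first index with $\eta_t \leq 1/\ell$. The strategy is to restart the small-stepsize argument from $t = \tau$, so only $\phi_\tau$ and the warm-up contribution $\sum_{t<\tau}\phi_t$ need to be controlled. Dropping the $-\eta_t\|\nabla f(x_t)\|^2$ term and using $\|\nabla f(x_t)\|^2 \leq 2\ell\phi_t$ (a consequence of $\ell$-smoothness) upgrades the descent inequality to the linear recursion $\phi_{t+1} \leq (1+\ell^2\eta_t^2)\phi_t + \ell\sigma^2\eta_t^2/2$, which unrolls to
\[
\phi_\tau \leq \Delta\prod_{t=0}^{\tau-1}(1+\ell^2\eta_t^2) + \frac{\ell\sigma^2}{2}\sum_{t=0}^{\tau-1}\eta_t^2\prod_{s=t+1}^{\tau-1}(1+\ell^2\eta_s^2).
\]
Since $\ell^2\eta_t^2 \geq 1$ throughout the warm-up, $(1+\ell^2\eta_t^2) \leq 2\ell^2\eta_t^2$, so the leading product collapses to $2^\tau(\ell\eta)^{2\tau}/(\tau!)^{2\alpha}$; applying Stirling $(\tau!)^{2\alpha} \geq (2\pi\tau)^\alpha(\tau/e)^{2\alpha\tau}$ together with the defining relation $(\tau+1)^\alpha \geq \ell\eta$ cancels $(\ell\eta)^{2\tau}$ against $\tau^{2\alpha\tau}$ up to a factor $(1+1/\tau)^{2\alpha\tau}\leq e^{2\alpha}$, producing the advertised $(4e^{2\alpha})^\tau/(2\pi\tau)^\alpha$ factor.

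The closing step reapplies the small-stepsize analysis on the window $[\tau,T-1]$ with $\phi_\tau$ in place of $\Delta$, and absorbs the warm-up gradient contribution via $\sum_{t<\tau}\mathbb{E}\|\nabla f(x_t)\|^2 \leq 2\ell\sum_{t<\tau}\phi_t$, which inherits the same exponential factor and produces the bracketed prefactor $1+\ell\eta(1+\tau^{1-\alpha}/(1-\alpha))$ (the $\tau^{1-\alpha}/(1-\alpha)$ term arising from $\sum_{t<\tau}\eta_t$); the noise sum is then treated by the same three-case split as in the small-stepsize regime. The main obstacle is the product bound: a naive $\exp(\sum\ell^2\eta_t^2)$ yields an unacceptable $(\ell\eta)^{O(\tau)}$ factor, so exploiting the Stirling cancellation anchored at the critical index $\tau \approx (\eta\ell)^{1/\alpha}$ is what keeps the exponential base dependent only on $\alpha$; beyond that, the proof reduces to routine but tedious bookkeeping across the three $\alpha$-sub-cases.
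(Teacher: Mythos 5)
Your proposal follows essentially the same route as the paper's proof: the same one-step descent inequality, the same split at the critical index $\tau=\lceil(\eta\ell)^{1/\alpha}-1\rceil$, the same linear recursion $\phi_{t+1}\leq(1+\ell^2\eta_t^2)\phi_t+\ell\sigma^2\eta_t^2/2$ via $\|\nabla f\|^2\leq 2\ell\phi_t$ for the warm-up phase, and the same Stirling cancellation of $(\ell\eta)^{2\tau}$ against $(\tau!)^{2\alpha}$ anchored at $\tau\approx(\eta\ell)^{1/\alpha}$ to obtain the $(4e^{2\alpha})^{\tau}/(2\pi\tau)^{\alpha}$ factor, followed by the identical three-case treatment of the noise sum. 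Aside from trivially different bookkeeping of constants in the product bound, this is the paper's argument.
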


\begin{proof}
 By $\ell$-smoothness of $f(\cdot)$,
\begin{align*}
    f(x_{t+1}) & \leq f(x_t) + \langle \nabla f(x_t), x_{t+1} - x_t \rangle + \frac{\ell}{2}\|x_{t+1}- x_t \|^2 \\
    & = f(x_t) - \eta_t \langle \nabla f(x_t), g(x_t;\xi_t) \rangle + \frac{\ell\eta_t^2}{2} \|g(x_t;\xi_t)\|^2
\end{align*}
Taking expectation, 
\begin{align} \nonumber
    \mathbb{E}  f(x_{t+1}) & \leq  \mathbb{E} f(x_t) - \eta_t \mathbb{E} \|\nabla f(x_t)\|^2 + \frac{\ell\eta_t^2}{2} \mathbb{E}\|\nabla f(x_t)\|^2 + \frac{\ell\eta_t^2}{2} \sigma^2 \\
    & \leq  \mathbb{E} f(x_t) - \left(\eta_t - \frac{\ell\eta_t^2}{2}\right) \mathbb{E} \|\nabla f(x_t)\|^2 + \frac{\ell\eta_t^2}{2}\sigma^2
\end{align}
We note that $\eta_t - \frac{\ell\eta_t^2}{2} \geq \frac{\eta_t}{2}$ when $\eta_t \leq \frac{1}{\ell}$, i.e., $t \geq (\eta \ell)^{1/\alpha} -1$. Define $\tau = \lceil (\eta\ell)^{1/\alpha} - 1 \rceil$. Therefore,
for all $t < \tau$, 
\begin{align} 
\label{before_t}
    \mathbb{E}  f(x_{t+1})  \leq  \mathbb{E} f(x_t) +  \frac{\ell\eta_t^2}{2} \mathbb{E}\|\nabla f(x_t)\|^2 + \frac{\ell\eta_t^2}{2}\sigma^2. 
\end{align}
For all $t \geq \tau$, we have 
\begin{align} 
    \mathbb{E}  f(x_{t+1})  \leq  \mathbb{E} f(x_t) -  \frac{\eta_t}{2} \mathbb{E} \|\nabla f(x_t)\|^2 + \frac{\ell\eta_t^2}{2}\sigma^2. 
\end{align}
Summing from $t = \tau$ to $ T - 1$, we have 
\begin{align} \label{second_grad_sum}
    \sum_{t = \tau}^{ T -1} \frac{\eta_t}{2} \mathbb{E} \|\nabla f(x_t)\|^2 & \leq \mathbb{E} f(x_{\tau}) - \mathbb{E} f(x_{ T}) +  \sum_{t = \tau}^{ T -1}\frac{\ell\eta_t^2}{2}\sigma^2 %
\end{align}
Now we want to bound $\mathbb{E} f(x_{\tau}) - f(x_{T}) \leq \mathbb{E} f(x_{\tau}) - f^*$, where $f^* \triangleq \min_{x\in \mathbb{R}^d}f(x)$. From (\ref{before_t}), 
\begin{align} \nonumber
    \mathbb{E}  f(x_{t+1}) - f^* & \leq  \mathbb{E} f(x_t) - f^*  + \frac{\ell\eta_t^2}{2} \mathbb{E}\|\nabla f(x_t)\|^2 + \frac{\ell\eta_t^2}{2} \sigma^2 \\ \nonumber
    & \leq  (1 + \ell^2\eta_t^2)[\mathbb{E} f(x_t) - f^* ] + \frac{\ell\eta_t^2}{2}\sigma^2, %
\end{align}
where in the second inequality we use $\|\nabla f(x)\|^2 \leq 2\ell[f(x) - f^*]$. When $\tau = 0$, $f(x_\tau) - f(x_T) \leq \Delta$; when $\tau \geq 1$, recursing the inequality above, for $j \leq \tau$,
\begin{align} \nonumber
    \mathbb{E} f(x_j) - f^* & \leq
    \Delta\left(\prod_{t = 0}^{j-1}1 + \ell^2\eta_t^2\right)  + \sum_{k=0}^{j - 2} \left( \prod_{t = k+1}^{j - 1}1 + \ell^2\eta_t^2\right)\frac{\ell\eta_k^2}{2}\sigma^2 + \frac{\ell\eta^2_{j-1}}{2}\sigma^2 \\ \nonumber
    & \leq 
    \left(\prod_{t = 0}^{j-1}1 +\ell^2\eta_t^2\right)\left( \Delta + \sum_{t=0}^{j-1}\frac{\ell\eta_t^2}{2}\sigma^2\right) \\ \label{before_tau}
    & \leq \left(\prod_{t = 0}^{\tau-1}1 +\ell^2\eta_t^2\right)\left( \Delta + \sum_{t=0}^{\tau-1}\frac{\ell\eta_t^2}{2}\sigma^2\right).
\end{align}
Also, with $\|\nabla f(x)\|^2 \leq 2\ell[f(x) - f^*]$, if $\tau \geq 1$, 
\begin{align*}
    \sum_{t = 0}^{\tau -1} \frac{\eta_t}{2} \mathbb{E} \|\nabla f(x_t)\|^2 &  \leq  \sum_{t = 0}^{\tau -1} \eta_t \ell \mathbb{E} \left( f(x_t) - f^*\right)  \\
    & \leq  \ell \left(\sum_{t = 0}^{\tau -1} \eta_t\right) \left(\prod_{t = 0}^{\tau-1}1 +\ell^2\eta_t^2\right)\left( \Delta + \sum_{t=0}^{\tau-1}\frac{\ell\eta_t^2}{2}\sigma^2\right),
\end{align*}
where in the second inequality we use (\ref{before_tau}) . Combining with (\ref{second_grad_sum}) and (\ref{before_tau}), if $\tau \geq 1$
\begin{align*}
 \sum_{t = 0}^{T -1} \frac{\eta_t}{2} \mathbb{E} \|\nabla f(x_t)\|^2 
& \leq \left(\prod_{t = 0}^{\tau-1}1 +\ell^2\eta_t^2\right)\left( \Delta + \sum_{t=0}^{\tau-1}\frac{\ell\eta_t^2}{2}\sigma^2\right) +  \sum_{t = 0}^{T -1}\frac{\ell\eta_t^2}{2}\sigma^2 \\ 
& \qquad + \ell \left(\sum_{t = 0}^{\tau -1} \eta_t\right)  \left(\prod_{t = 0}^{\tau-1}1 +\ell^2\eta_t^2\right)\left( \Delta + \sum_{t=0}^{\tau-1}\frac{\ell\eta_t^2}{2}\sigma^2\right).
\end{align*}
We note that 
\begin{align*}
      \prod_{t = 0}^{\tau-1}\left(1 +\ell^2\eta_t^2 \right) &= \prod_{t = 0}^{\tau-1}\left(1 + \frac{\ell^2\eta^2}{(t+1)^{2\alpha}} \right) = \frac{\prod_{t= 0}^{\tau - 1}\left(\ell^2\eta^2 + (t + 1)^{2\alpha}\right)}{(\tau!)^{2\alpha}}  \leq \frac{\left(\ell^2\eta^2 + \tau^{2\alpha} \right)^\tau}{(\tau!)^{2\alpha}} \\
      & \leq \frac{\left( 2\ell^2\eta^2\right)^\tau}{(\tau!)^{2\alpha}} 
      \leq \frac{(2\ell^2\eta^2)^\tau}{\left[\sqrt{2\pi\tau}\left( \frac{\tau}{e}\right)^\tau\exp{(\frac{1}{12\tau+1})}\right]^{2\alpha}} \\
      & \leq \frac{1}{(2\pi\tau)^{\alpha}}\left( \frac{2\ell^2\eta^2 e^{2\alpha}}{\tau^{2\alpha}}\right)^\tau \leq \frac{1}{(2\pi\tau)^\alpha}\left(4e^{2\alpha}\right)^\tau,
\end{align*}
where in the third inequality we use Stirling's approximation. 
Therefore, 
\begin{equation*}
    \sum_{t = 0}^{T -1} \frac{\eta_t}{2} \mathbb{E} \|\nabla f(x_t)\|^2 \leq \frac{1}{(2\pi\tau)^\alpha}\left(4e^{2\alpha}\right)^\tau\left[1 + \ell\left(\sum_{t = 0}^{\tau -1} \eta_t\right) \right]\left( \Delta + \sum_{t=0}^{T-1}\frac{\ell\eta_t^2}{2}\sigma^2\right).
\end{equation*}
Plugging in $\eta_t = \eta/(t+1)^\alpha$, when $\alpha = 1/2$,
\begin{equation*}
    \sum_{t = 0}^{T -1} \mathbb{E} \|\nabla f(x_t)\|^2 
    \leq \frac{\sqrt{2T}}{\eta\sqrt{\pi\tau}}(4e)^{\tau}\left[1 + \ell\eta\left(1+2\sqrt{\tau}\right) \right]\left( \Delta + \frac{\ell\sigma^2\eta^2}{2}(1+\log T)\right);
\end{equation*}
when $1/2 < \alpha < 1$,
\begin{equation*}
    \sum_{t = 0}^{T -1} \mathbb{E} \|\nabla f(x_t)\|^2 
    \leq \frac{2\ T^{\alpha}}{\eta(2\pi \tau)^{\alpha}}(4e^{2\alpha})^{\tau}\left[1 + \ell\eta\left(1 + \frac{\tau^{1-\alpha} }{1-\alpha}\right) \right]\left( \Delta + \frac{\ell\sigma^2\eta^2}{2\left(1-2^{1-2\alpha} \right)}\right).
\end{equation*}
when $0 < \alpha < 1/2$,
\begin{equation*}
    \sum_{t = 0}^{T -1} \mathbb{E} \|\nabla f(x_t)\|^2 
    \leq \frac{2\ T^{\alpha}}{\eta(2\pi \tau)^{\alpha}}(4e^{2\alpha})^{\tau}\left[1 + \ell\eta\left(1 + \frac{\tau^{1-\alpha} }{1-\alpha}\right) \right]\left( \Delta + \frac{\ell\sigma^2\eta^2 T^{1-2\alpha}}{2\left(1-2\alpha \right)}\right);
\end{equation*}
If $\tau = 0$, from (\ref{second_grad_sum}), 
\begin{equation*}
    \sum_{t = 0}^{ T -1} \frac{\eta_t}{2} \mathbb{E} \|\nabla f(x_t)\|^2  \leq \Delta +  \sum_{t = 0}^{ T -1}\frac{\ell\eta_t^2}{2}\sigma^2,
\end{equation*}
Plugging in $\eta_t$, when $\alpha = 1/2$,
\begin{equation*}
    \sum_{t = 0}^{ T -1} \mathbb{E} \|\nabla f(x_t)\|^2  \leq \frac{2\sqrt{T}}{\eta} \left(\Delta +  \frac{\ell\sigma^2\eta^2}{2}(1+\log T)\right);
\end{equation*}
when $1/2 < \alpha < 1$,
\begin{equation*}
    \sum_{t = 0}^{ T -1} \mathbb{E} \|\nabla f(x_t)\|^2  \leq \frac{2T^\alpha}{\eta} \left(\Delta +  \frac{\ell\sigma^2\eta^2}{2\left(1-2^{1-2\alpha} \right)}\right).
\end{equation*}
when $0 < \alpha < 1/2$,
\begin{equation*}
    \sum_{t = 0}^{ T -1} \mathbb{E} \|\nabla f(x_t)\|^2  \leq \frac{2T^\alpha}{\eta} \left(\Delta +  \frac{\ell\sigma^2\eta^2 T^{1 - 2\alpha}}{2\left(1- 2\alpha \right)}\right).
\end{equation*}

\end{proof}

\begin{remark}
When we run SGD with stepsize $\eta_t = \eta/(t+1)^{\alpha}$, where $1/2 < \alpha < 1$, Theorem \ref{thm:general_stepsize} implies a complexity of $\cO\left((4e^{2\alpha})^{\frac{(\eta\ell)^{1/\alpha}}{1-\alpha}}(\eta\ell)^{\frac{1}{\alpha(1-\alpha)}} \cdot \epsilon^{\frac{-2}{1-\alpha}}\right)$ in the large initial stepsize regime $\eta > 1/\ell$. Compared with the case $\alpha = 1/2$, when $\alpha$ is larger, the convergence rate in $T$ is slower, but it also comes with a smaller exponent, i.e., $(\eta\ell)^{1/\alpha}$. This is because $\alpha = 1/2$ leads to the best convergence rate  in $T$ \citep{drori2020complexity}, while the faster decaying stepsize $\alpha > 1/2$ will reach the desirable stepsize $1/\ell$ earlier so that it accumulates less gradient norms before $\tau$. For $0 < \alpha < 1/2$, however, it comes with both a larger exponent and a slower convergence rate. 

\end{remark}

\begin{proposition} [with bounded gradient]
\label{lemma:sgd_with_bound_g}
Under Assumption \ref{assume:smoothness}, \ref{assume:stochastic_grad} and additionally assuming that the gradient norm is upper bounded by $G$, i.e., $\|\nabla f(x) \| \leq G$ for all $x \in \bR^{d}$, if we run SGD with stepsize $\eta_t = \eta / \sqrt{t + 1}$ with $\eta > 0$, then
\begin{align*}
  \Ep{\frac{1}{T} \sum_{t=0}^{T-1} \norm*{\nabla f(x_t)}^2} 
  \leq \frac{1}{\sqrt{T}}\left(\frac{\Delta}{\eta} + \frac{\ell \eta \left(G^2 + \sigma^2\right)}{2} \log T \right).
\end{align*}
\end{proposition}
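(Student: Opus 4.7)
The plan is to follow the standard descent-lemma argument for SGD but, crucially, replace the variance term $\sigma^2$ by $G^2 + \sigma^2$ using the bounded-gradient assumption, and then exploit the fact that $\eta_t = \eta/\sqrt{t+1} \geq \eta/\sqrt{T}$ uniformly over $t \in \{0,\ldots,T-1\}$ to obtain the advertised $1/\sqrt{T}$ rate.

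First, I would apply $\ell$-smoothness along the trajectory to get
\[
f(x_{t+1}) \leq f(x_t) - \eta_t \langle \nabla f(x_t), g(x_t;\xi_t)\rangle + \tfrac{\ell\eta_t^2}{2}\|g(x_t;\xi_t)\|^2.
\]
Taking conditional expectation with respect to $\xi_t$ and using unbiasedness together with the variance bound $\mathbb{E}\|g(x_t;\xi_t) - \nabla f(x_t)\|^2 \leq \sigma^2$ and the bounded-gradient assumption $\|\nabla f(x_t)\|^2 \leq G^2$, I obtain $\mathbb{E}\|g(x_t;\xi_t)\|^2 \leq G^2 + \sigma^2$. This is precisely where the added assumption is used, and it lets me drop the troublesome quadratic term in $\|\nabla f(x_t)\|^2$ on the right-hand side that otherwise forces the $\eta_t \leq 1/\ell$ restriction. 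The descent inequality simplifies to
\[
\mathbb{E} f(x_{t+1}) \leq \mathbb{E} f(x_t) - \eta_t \,\mathbb{E}\|\nabla f(x_t)\|^2 + \tfrac{\ell \eta_t^2 (G^2+\sigma^2)}{2}.
\]

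Next, I would telescope from $t=0$ to $T-1$, bound $f(x_0) - \mathbb{E} f(x_T) \leq \Delta$, and arrive at
\[
\sum_{t=0}^{T-1} \eta_t\, \mathbb{E}\|\nabla f(x_t)\|^2 \;\leq\; \Delta + \tfrac{\ell(G^2+\sigma^2)}{2}\sum_{t=0}^{T-1}\eta_t^2.
\]
On the left I lower bound $\eta_t \geq \eta/\sqrt{T}$ to factor out a uniform stepsize and convert the sum of gradient norms to the average. On the right I use the standard estimate $\sum_{t=0}^{T-1}\eta_t^2 = \eta^2 \sum_{t=0}^{T-1} 1/(t+1) \leq \eta^2 \log T$ (treating the harmonic sum in the usual way, absorbing the constant into the logarithm for $T$ large). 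Dividing through by $T\cdot \eta/\sqrt{T} = \eta\sqrt{T}$ then yields exactly
\[
\frac{1}{T}\sum_{t=0}^{T-1}\mathbb{E}\|\nabla f(x_t)\|^2 \;\leq\; \frac{1}{\sqrt{T}}\!\left(\frac{\Delta}{\eta} + \frac{\ell\eta(G^2+\sigma^2)}{2}\log T\right),
\]
as claimed.

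There is essentially no hard step here — the whole point of the bounded-gradient assumption (as the authors emphasize in the preceding discussion) is that it collapses the messy two-stage analysis of Theorem \ref{thm:unbounded} into a one-line descent inequality with no exponential blow-up, because the $G^2$ constant effectively absorbs all the dependence that would otherwise have to be tracked through $(\eta\ell)$-dependent factors. The only minor bookkeeping concern is keeping the logarithmic factor tight (i.e.\ $\sum 1/(t+1) \leq 1+\log T$, which the stated bound implicitly absorbs), and ensuring the uniform lower bound $\eta_t \geq \eta/\sqrt{T}$ is applied only on the left-hand side where it strengthens the inequality.
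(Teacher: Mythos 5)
Your proposal is correct and follows essentially the same route as the paper's own proof: descent lemma, bounding $\Ep{\|g(x_t;\xi_t)\|^2}\leq G^2+\sigma^2$ via the bounded-gradient assumption, telescoping, lower-bounding $\eta_t\geq\eta/\sqrt{T}$ on the left, and bounding the harmonic sum by a logarithm on the right. The only discrepancy is the cosmetic one you already flag (the harmonic sum is really $\leq 1+\log T$), which the paper's own write-up shares.
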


\begin{proof}
By the smoothness of $f(\cdot)$, we have
\begin{align*}
  f(x_{t+1}) \leq f(x_t) - \eta_t \inp*{\nabla F(x_t; \xi_t)}{\nabla f(x_t)} + \frac{\ell\eta_t^2}{2} \norm*{\nabla F(x_t; \xi_t)}^2.
\end{align*}
Taking expectation and  summing from $t=0$ to $T-1$,
\begin{align*}
 \Ep{\sum_{t=0}^{T-1} \eta_t \norm*{\nabla f(x_t)}^2}
 &\leq f(x_0) - f(x_{T}) + \frac{\ell}{2}\sum_{t=0}^{T-1} \eta_t^2 \Ep{\norm*{\nabla F(x_t; \xi_t)}^2} \\
 &\leq \Delta + \frac{\ell}{2}\sum_{t=0}^{T-1} \eta_t^2 \Ep{\norm*{\nabla F(x_t; \xi_t)}^2} \\
 &\leq \Delta + \frac{\ell}{2}\sum_{t=0}^{T-1} \eta_t^2 \left( \norm*{\nabla f(x_t)}^2 + \Ep{\norm*{\nabla F(x_t; \xi_t) - f(x_t)}^2} \right) \\
 &\leq \Delta + \frac{\ell}{2}\sum_{t=0}^{T-1} \eta_t^2 \left(G^2 + \sigma^2\right).
\end{align*}
Let $\eta_t = \eta/\sqrt{t+1}$, 
\begin{align*}
  \frac{\eta}{\sqrt{T}} \Ep{\sum_{t=0}^{T-1} \norm*{\nabla f(x_t)}^2} \leq \Ep{\sum_{t=0}^{T-1} \frac{\eta}{\sqrt{t+1}} \norm*{\nabla f(x_t)}^2} & \leq \Delta + \frac{\ell \left(G^2 + \sigma^2\right) }{2}\sum_{t=0}^{T-1} \frac{\eta^2}{t+1}, \\
  & \leq \Delta + \frac{\ell \eta^2 \left(G^2 + \sigma^2\right) }{2}\log T.
\end{align*}

\end{proof}

\subsection{Lower Bound for SGD}

\paragraph{Proof for Theorem \ref{thm:lower_bound}}
\mbox{}\\

\begin{proof} 
We construct the hard instance with 4 segments of quadratic functions. The function is symmetric about $x = 0$, and we will define it on $x \leq 0$ as below. We illustrate it in Figure \ref{fig:lower_bound_demo}. \\
\textit{Segment 1.} We define 
$
f(x) = \frac{\ell}{2} x^2.
$
We pick $x_0$ such that $f(x_0) - f^* = \Delta$, i.e., $x_0 = \sqrt{\frac{2\Delta}{\ell}}$. We define $t_0$ to be the first iteration that $\eta_{t_0} = \frac{\eta}{\sqrt{t_0 + 1}} \geq \frac{4}{\ell}$, i.e., $t_0 = \left \lfloor \frac{\eta^2\ell^2}{16} - 1  \right\rfloor$. With the update rule $
x_{t+1} = x_t - \frac{\eta\ell}{\sqrt{t+1}}x_t = \left(1 - \frac{\eta\ell}{\sqrt{t+1}} \right)x_t
$,
we have for $t \leq t_0$
\begin{align*}
|x_t|^2 & = \left[\prod_{k=1}^t
\left(\frac{\eta\ell}{\sqrt{k}} -1 \right)
\right]^2|x_0|^2 
 \geq \prod_{k=1}^t  \frac{\eta^2\ell^2}{2k}|x_0|^2 \\
& = \frac{(\eta^2\ell^2/2)^{t}}{(t)!}|x_0|^2  > \frac{(\eta^2\ell^2/2)^t}{\sqrt{2\pi t}(t/e)^te^{1/12t}}|x_0|^2 
 \geq \frac{1}{3\sqrt{t}}(8e)^t|x_0|^2,
\end{align*}
where in the inequality we use $\left(\frac{\eta\ell}{\sqrt{k}} - 1\right)^2 \geq \frac{\eta^2\ell^2}{2k}$ with $k\leq t_0$, in the second inequality we use Stirling's approximation, and in the last inequality we use $t\leq \eta^2\ell^2/16$. We note that
$$
|x_{t_0}|^2 \geq \frac{1}{3\sqrt{t_0}}(8e)^{t_0}|x_0|^2 \geq \frac{4}{3\eta\ell}(8e)^{\eta^2\ell^2/16-2}|x_0|^2.
$$
Without loss of generality, we assume $x_{t_0} > 0$.
Segment 1 is define on the domain $\{x: |x| \leq x_{t_0}\}$. \vspace{5mm} \\ 
\textit{Segment 2.} This segment is the mirror of Segment 1. On domain $\{x: x_{t_0} \leq x \leq 2x_{t_0}\}$, we define
$
f(x) = -\frac{\ell}{2}(x- 2x_{t_0})^2 + \ell x_{t_0}^2.
$
\vspace{5mm} \\
\textit{Segment 3. } We note that 
$$
x_{t_0+1} = x_{t_0} - \eta_{t_0}\ell x_{t_0} = \left(1 - \frac{\eta\ell}{\sqrt{t_0 + 1}} \right) x_{t_0} \leq -3x_{t_0},
$$
where  the inequality is from the definition of $t_0$, and   
$$\Tilde{\Delta} \triangleq 
\frac{\ell x_{t_0}^2}{2} \geq
\frac{2}{3\eta}(8e)^{\eta^2\ell^2/16-2}|x_0|^2 = \frac{4}{3\eta\ell}(8e)^{\eta^2\ell^2/16-2}\Delta.
$$
We construct a quadratic function such that: it passes $(-2x_{t_0}, \ell x_{t_0}^2)$ with gradient 0; the gradient at $x = x_{t_0 + 1}$ is $\frac{\sqrt{\widetilde \Delta}}{2 \sqrt{\max\{1/\ell, \sum_{t = t_0 + 1}^{T-1} \eta_t \}}}$. This quadratic function is uniquely defined to be 
$$
f(x) = -\frac{\sqrt{\widetilde \Delta}\ (x + 2x_{t_0})^2}{4(-2x_{t_0} - x_{t_0+1}) \sqrt{\max\{1/\ell, \sum_{t = t_0 + 1}^{T-1} \eta_t \}}} + \ell x_{t_0}^2.
$$
It can be verified that this function is $\ell$-smooth: as $x_{t_0+1} \leq -3x_{t_0}$,
\begin{align*}
    \frac{\sqrt{\frac{1}{2}\ell x_{t_0}^2}}{2(-2x_{t_0} - x_{t_0+1})} \leq \sqrt{\ell}
    \Longleftrightarrow \frac{\sqrt{\Tilde{\Delta}}}{2(-2x_{t_0} - x_{t_0+1})\sqrt{\ell}}  \leq \ell \\
    \Longrightarrow \frac{\sqrt{\widetilde \Delta}\ }{2(-2x_{t_0} - x_{t_0+1}) \sqrt{\max\{1/\ell, \sum_{t = t_0 + 1}^{T-1} \eta_t \}}} \leq \ell.
\end{align*}
The function is defined on the domain $\{x: x_{t_0+1} \leq x \leq -2x_{t_0} \}$. %
\vspace{5mm} \\
\textit{Segment 4. } For convenience, we define $w = f(x_{t_0+1})$. We can verify that $w \geq \Tilde{\Delta}$: as $\frac{1}{\sqrt{t_0}} < \frac{4}{\eta\ell}$,
\begin{align*}
   & -2x_{t_0} - \left(1- \frac{\eta\ell}{\sqrt{t_0 + 1}} \right)x_{t_0} \leq 4\sqrt{\frac{x_{t_0}}{2}} \Longleftrightarrow \frac{-2x_{t_0} - x_{t_0+1}}{4} \leq \sqrt{\frac{x_{t_0}}{2}} \\
   & \Longrightarrow \frac{-2x_{t_0} - x_{t_0+1}}{4 \sqrt{\max\{1/\ell, \sum_{t = t_0 + 1}^{T-1} \eta_t \}}} \leq \sqrt{\frac{1}{2}\ell x_{t_0}^2} 
    \Longleftrightarrow \frac{\sqrt{\widetilde \Delta} \ (x_{t_0+1} + 2x_{t_0})^2 }{4(-2x_{t_0} - x_{t_0+1}) \sqrt{\max\{1/\ell, \sum_{t = t_0 + 1}^{T-1} \eta_t \}}} \leq \Tilde{\Delta}.
\end{align*}
So we conclude $w \geq \Tilde{\Delta}$. Now we construct a quadratic function similar to that in Proposition 1 of \citep{drori2020complexity}: it passes $(x_{t_0+1}, w)$ with gradient $\frac{\sqrt{\widetilde \Delta}}{2\sqrt{ \max\{1/\ell, \sum_{t = t_0 + 1}^{T-1} \eta_t \}}}$; 
the minimum is at $x = x_{t_0+1} - \sqrt{\Tilde{\Delta}\max\{1/\ell, \sum_{t = t_0 + 1}^{T-1} \eta_t \}} $. This quadratic function is  defined to be
\begin{equation*}
    f(x) = \frac{\left(x - x_{t_0+1} + \sqrt{\Tilde{\Delta}\max\{1/\ell, \sum_{t = t_0 + 1}^{T-1} \eta_t \}} \right)^2}{4 \max\{1/\ell, \sum_{t = t_0 + 1}^{T-1} \eta_t \}} + w - \frac{\Tilde{\Delta}}{4}
\end{equation*}
on the domain $\{x: x \leq x_{t_0+1} \}$. It is obvious that $f(x) \geq 0$ and is $\ell$-smooth. Following the same reasoning of Proposition 1 in \citep{drori2020complexity}, also presented as Lemma \ref{lemma:lower_bound_help} in the appendix for completeness, we can conclude for all $t: t_0 + 1 \leq t \leq T$,
\begin{align*}
    |\nabla f(x_t)|  \geq \frac{\sqrt{\Tilde{\Delta}}}{4\sqrt{\max\{1/\ell, \sum_{t = t_0 + 1}^{T-1} \eta_t \}}} 
     \geq \frac{1}{4}\sqrt{\Tilde{\Delta}}\min\bigg\{\sqrt{\ell}, (2\eta)^{-1/2}T^{-1/4} \bigg\},
\end{align*}
where in the second inequality we use 
$\sum_{t = t_0 + 1}^{T-1} \eta_t = \sum_{t = t_0 + 1}^{T-1} \frac{\eta}{\sqrt{t+1}} \leq 2\eta T^{1/2} $.

\end{proof}

The following lemma is used in the proof of Theorem \ref{thm:lower_bound}. It is a straightforward modification of Proposition 1 in \citep{drori2020complexity}. We present it here for completeness. 

\begin{lemma}
\label{lemma:lower_bound_help}
Under the same setting and notations as the proof of 
Theorem \ref{thm:lower_bound}, if we run gradient descent with stepsize $\{\eta_t\}_{t=t_0+1}^{T-1}$ starting from point $x_{t_0+1}$ on function
\begin{equation*}
    f(x) = \frac{\left(x - x_{t_0+1} + \sqrt{\Tilde{\Delta}\max\{1/\ell, \sum_{t = t_0 + 1}^{T-1} \eta_t \}} \right)^2}{4 \max\{1/\ell, \sum_{t = t_0 + 1}^{T-1} \eta_t \}} + w - \frac{\Tilde{\Delta}}{4},
\end{equation*}
 then for all $t: t_0 + 1 \leq t \leq T$,
\begin{align*}
    |\nabla f(x_t)| & \geq \frac{\sqrt{\Tilde{\Delta}}}{4\sqrt{\max\{1/\ell, \sum_{t = t_0 + 1}^{T-1} \eta_t \}}}.
\end{align*}
\end{lemma}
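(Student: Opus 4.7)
Denote $S := \max\{1/\ell, \sum_{t=t_0+1}^{T-1}\eta_t\}$ for brevity, and let $x^\star := x_{t_0+1} - \sqrt{\tilde\Delta\,S}$ be the minimizer of the quadratic $f$. A direct differentiation gives
\[
  f'(x) \;=\; \frac{x - x_{t_0+1} + \sqrt{\tilde\Delta\,S}}{2S} \;=\; \frac{x - x^\star}{2S}.
\]
The plan is to track the shifted iterate $y_t := x_t - x^\star$. Since $f'$ is linear in $y_t$, the gradient descent recursion $x_{t+1}=x_t-\eta_t f'(x_t)$ collapses to the simple one-dimensional linear recursion
\[
  y_{t+1} \;=\; y_t\Bigl(1 - \tfrac{\eta_t}{2S}\Bigr), \qquad y_{t_0+1}=\sqrt{\tilde\Delta\,S}.
\]
Then $|f'(x_t)| = |y_t|/(2S)$, so proving the lemma reduces to showing $|y_t| \ge \sqrt{\tilde\Delta\,S}/2$ for all $t_0+1 \le t \le T$.

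The next step is a product-inequality argument. By the definition of $S$, one has $\eta_k \le \sum_{j=t_0+1}^{T-1}\eta_j \le S$ for every index $k$ in the relevant range, hence each contraction factor satisfies $1-\eta_k/(2S) \in [1/2,\,1]$. In particular all factors are positive, so $y_t$ retains the sign of $y_{t_0+1}>0$ throughout, and
\[
  y_t \;=\; \sqrt{\tilde\Delta\,S}\,\prod_{k=t_0+1}^{t-1}\Bigl(1 - \tfrac{\eta_k}{2S}\Bigr).
\]
Applying Weierstrass's product inequality $\prod_k(1-a_k)\ge 1-\sum_k a_k$, valid because $a_k:=\eta_k/(2S)\in[0,1]$, together with $\sum_{k=t_0+1}^{T-1}\eta_k/(2S)\le 1/2$, yields
\[
  \prod_{k=t_0+1}^{t-1}\Bigl(1-\tfrac{\eta_k}{2S}\Bigr) \;\ge\; 1 - \sum_{k=t_0+1}^{T-1}\frac{\eta_k}{2S} \;\ge\; \tfrac{1}{2}.
\]
Combining the last two displays gives $y_t \ge \tfrac12\sqrt{\tilde\Delta\,S}$, and consequently
\[
  |f'(x_t)| \;=\; \frac{y_t}{2S} \;\ge\; \frac{\sqrt{\tilde\Delta}}{4\sqrt{S}},
\]
which is exactly the stated bound.

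There is essentially no hard step; the only subtlety to verify is that the iterates never ``overshoot'' the minimizer $x^\star$ (which would flip the sign of $y_t$ and could allow the gradient to vanish in principle). This is handled uniformly by the observation $\eta_k/(2S)\le 1/2$ above, which comes for free from the choice of $S$ as an upper bound on the total step-size budget. Everything else is a routine linear-recursion computation specific to a quadratic with positive curvature $1/(2S)$.
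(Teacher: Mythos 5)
Your proof is correct and follows essentially the same route as the paper: both track the shifted iterate $x_t - x_{t_0+1} + \sqrt{\tilde\Delta\,S}$, reduce gradient descent to the linear recursion with contraction factors $1-\eta_k/(2S)$, and lower-bound the resulting product by $1/2$ using $\sum_k \eta_k \le S$. The only (immaterial) difference is that you bound the product via the Weierstrass inequality $\prod_k(1-a_k)\ge 1-\sum_k a_k$, whereas the paper uses $1-z/2\ge \exp\bigl(z\log\tfrac12\bigr)$; both yield the same constant.
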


\begin{proof}
From the update of gradient descent, we have
\begin{align*}
    x_{t+1} & = x_t - \eta_t \cdot \frac{x_t - x_{t_0+1} + \sqrt{\Tilde{\Delta}\max\{1/\ell, \sum_{t = t_0 + 1}^{T-1} \eta_t \}} }{2 \max\{1/\ell, \sum_{t = t_0 + 1}^{T-1} \eta_t \}},
\end{align*}
which leads to 
\begin{align*}
    & \fakeeq x_{t+1}  - x_{t_0+1} + \sqrt{\Tilde{\Delta}\max\{1/\ell, \sum_{t = t_0 + 1}^{T-1} \eta_t \}} \\ & = \left( 1 -  \frac{\eta_t}{2 \max\{1/\ell, \sum_{t = t_0 + 1}^{T-1} \eta_t \}}\right) \left(  x_{t}  - x_{t_0+1} + \sqrt{\Tilde{\Delta}\max\{1/\ell, \sum_{t = t_0 + 1}^{T-1} \eta_t \}}\right).
\end{align*}
Recursing this, for $j\leq T$
\begin{align*}
    & \fakeeq x_{j}  - x_{t_0+1} + \sqrt{\Tilde{\Delta}\max\{1/\ell, \sum_{t = t_0 + 1}^{T-1} \eta_t \}}
    \\
     & = \prod_{k=t_0+1}^{j-1} \left( 1 -  \frac{\eta_k}{2 \max\{1/\ell, \sum_{t = t_0 + 1}^{T-1} \eta_t \}}\right) \left(  x_{t_0+1}  - x_{t_0+1} + \sqrt{\Tilde{\Delta}\max\{1/\ell, \sum_{t = t_0 + 1}^{T-1} \eta_t \}}\right) \\
    & \geq \exp\left(\log \frac{1}{2}\cdot \sum_{k=t_0+1}^{j-1} \frac{\eta_k}{2 \max\{1/\ell, \sum_{t = t_0 + 1}^{T-1} \eta_t \}} \right)  \sqrt{\Tilde{\Delta}\max\{1/\ell, \sum_{t = t_0 + 1}^{T-1} \eta_t \}} \\
     & \geq \frac{1}{2}\sqrt{\Tilde{\Delta}\max\{1/\ell, \sum_{t = t_0 + 1}^{T-1} \eta_t \}},
\end{align*}
where in the second inequality, we use $1-z/2 \geq \exp(\log \frac{1}{2}\cdot z)$ for $0\leq z\leq 1$. This directly implies what we want to prove by computing $\nabla f(x_j)$.

\end{proof}

\section{Proofs for NSGD Family in Section \ref{sec:adaptive}}
\label{sec:apdx_adaptive}

\bigskip

\begin{algorithm}[ht] 
    \caption{Normalized Stochastic Gradient Descent (NSGD)}
    \setstretch{1.23}
    \begin{algorithmic}[1]
      \STATE \textbf{Input:} initial point $x_0$
        \FOR{$t = 0,1,2,...$}
            \STATE sample  $\xi_t$ %
            \STATE  $x_{t+1} = x_t -  \frac{\gamma_t}{\|g(x_t; \xi_t)\|}g(x_t; \xi_t)$
        \ENDFOR
    \end{algorithmic} \label{alg:nsgd}
\end{algorithm}

\begin{figure}[t]
    \centering
    \subfigure[example 1]{
      \centering
      \includegraphics[width=0.24\linewidth]{./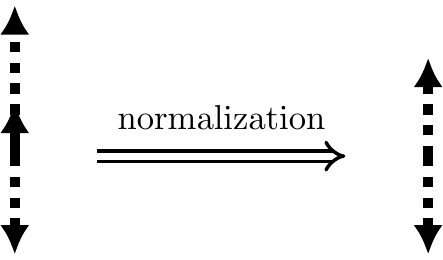}
    }
    \hspace{3cm}
    \subfigure[example 2]{
      \centering
      \includegraphics[width=0.3\linewidth]{./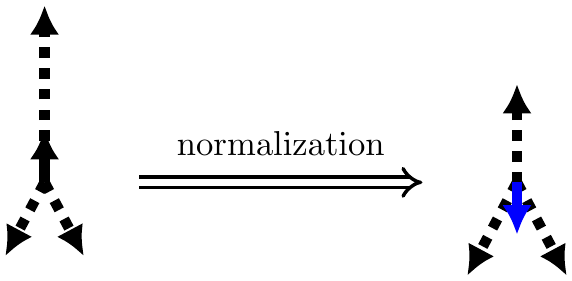}
    }
    \caption{The expected update of NSGD can vanish (example 1) or
    be in the opposite direction (example 2) of the true gradient. The solid black arrow represents
    the true gradient and the dashed arrows are the possible stochastic gradients (with equal possibilities). The solid blue arrow is the expected direction of NSGD update.}
    \label{fig:nsgd_diverge}
\end{figure}

\paragraph{Proof for Theorem \ref{lemma:nonconvergence}} \mbox{}\\
\begin{proof}
 Let us pick $f(x) = \frac{L}{2}x^2$ with $\frac{\epsilon^2}{2\Delta} < L \leq\ell$ and $L < \frac{\sigma - \epsilon}{\gamma_{\max}}$. Then we pick $x_0$ such that $\frac{\epsilon}{L} < x_0 < \sqrt{\frac{2\Delta}{L}} $, which implies that $\|\nabla f(x_0)\| > \epsilon$ and $f(x_0) - \min_x f\leq \Delta$. Now we define $D = \{x: -w \leq x \leq w  \}$ with $\frac{\epsilon}{L} + \gamma_{\max} < w < \frac{\sigma}{L}$. For $x \in D$, we have $\|\nabla f(x)\| \leq \sigma$ and we construct the noisy gradients: with $\delta > 1$
 $$
g(x; \xi) = (1 + \delta) \nabla f(x) \text{ w.p. } \frac{1}{2}, \text{ and } g(x; \xi) = (1 - \delta)\nabla f(x)  \text{ w.p. } \frac{1}{2}.
$$
It is obvious that $\nabla f(x)=\mathbb{E}[g(x; \xi)]$ and the variance at this point $\mathbb{E}\|\nabla f(x) - g(x; \xi)\|^2 = \delta^2\|\nabla f(x) \|^2 \leq \sigma^2$ with $\delta$ sufficiently close to 1. With the update rule, we note that $x_{t+1} = x_t - \gamma_t$ w.p. 1/2 and $x_{t+1} = x_t + \gamma_t$ w.p. 1/2, and therefore
$$
\mathbb{E}_{\xi_t}\left[\|\nabla f(x_{t+1}) \vert x_t \in D\|\right] = \frac{1}{2}[L\|x_t -\gamma_t\| + L\|x_t + \gamma_t\|] \geq L\|x_t\| = \|\nabla f(x_t)\|.
$$
For $x \notin D$, we have $\|x\| > \epsilon/L + \gamma_{\max}$, and we assume there is no noise in the gradients. Therefore, if $x_t \notin D$, we know that after one step of update $\|x_{t+1}\| > \epsilon/L$, which implies $\|\nabla f(x_{t+1})\| > \epsilon$. Combining two cases that $x_t \in D$ and $x_t \notin D$, we know that $\mathbb{E}\|\nabla f(x_t)\| > \epsilon$ for all $t$.

\end{proof}

\paragraph{Proof of Proposition~\ref{thm:NSGD}}
\mbox{}\\
\begin{proof}
    Denote $ e_t = g(x_t; \xi_t) - \nabla f(x_t)$.
    By Lemma~2 in \citep{cutkosky2020momentum}, 
    \begin{equation*}
    f(x_{t+1}) - f(x_t) \leq - \frac{\gamma_t}{3}\|\nabla f(x_t)\| + \frac{8\gamma_t}{3}\| e_t || + \frac{\ell\gamma_t^2}{2}.
    \end{equation*}
    Telescoping from $t=0$ to $T-1$, 
    $$
    \frac{\gamma}{3 T^{\nicefrac{1}{2}}} \sum_{t = 0}^{T-1} \|\nabla f(x_t)\| \leq \frac{1}{3}\sum_{t = 0}^{T-1}  \gamma_t\|\nabla f(x_t)\|  \leq \Delta + \frac{8}{3 }\sum_{t = 0}^{T-1} \gamma_t  \|e_t\|  + \sum_{t = 0}^{T-1}\frac{\ell\gamma_t^2}{2} ,
    $$
    Taking expectation, rearranging and using $\Ep{ \norm{ e_t } } \leq \left( \Ep{\norm{e_t}^2} \right)^{1/2}\leq \sigma$, we derive
    \begin{align} \nonumber
    \sum_{t = 0}^{T-1} \Ep{ \|\nabla f(x_t)\|}  &\leq 3 T^{\nicefrac{1}{2} }\left[ \frac{\Delta}{\gamma} + \frac{8 \sigma }{3 } \sum_{t = 0}^{T-1} \frac{1}{(t+1)^{1/2}}  + \frac{\ell \gamma }{2} \sum_{t=0}^{T-1} \frac{1}{t+1}  \right] \\
    &\leq 3 T^{1/2}\left[ \frac{\Delta}{\gamma} + 8\sigma T^{1/2} + \ell \gamma \log(T)  \right]. \nonumber
    \end{align}
\end{proof}

\bigskip

\paragraph{Proof for Proposition \ref{thm:nsgdm}}
\mbox{}\\
\begin{proof}
We define $\hat e_t = {g}_t - \nabla f(x_t)$. By Lemma~2 in \citep{cutkosky2020momentum}, for any $\gamma_t > 0$
\begin{equation}\label{eq:nsgdm-descent}
    f(x_{t+1}) - f(x_t) \leq - \frac{\gamma_t}{3}\|\nabla f(x_t)\| + \frac{8\gamma_t}{3}\|\hat{e}_t|| + \frac{\ell\gamma_t^2}{2}.
\end{equation}
 Telescoping from $t = 0$ to $T - 1$,
 $$
 \frac{\gamma}{3 T^{\nicefrac{3}{4}}} \sum_{t = 0}^{T-1} \|\nabla f(x_t)\| \leq \frac{1}{3}\sum_{t = 0}^{T-1}  \gamma_t\|\nabla f(x_t)\|  \leq \Delta + \frac{8}{3}\sum_{t = 0}^{T-1}\gamma_t \|\hat{e}_t|| + \sum_{t = 0}^{T-1}\frac{\ell\gamma_t^2}{2} ,
 $$
 By taking expectation on both sides, rearranging and controlling the variance term using Lemma~\ref{le:hat_et_bound}, we derive
 \begin{align} \nonumber
 \sum_{t = 0}^{T-1} \Ep{ \|\nabla f(x_t)\| } &\leq 3T^{\nicefrac{3}{4} }\left[ \frac{\Delta}{\gamma} + \frac{8}{3 \gamma }\sum_{t = 0}^{T-1} \gamma_t \Ep{ \|\hat{e}_t|| } + \frac{\ell \gamma }{2} \sum_{t=0}^{T-1}(t+1)^{-3/2}  \right] \notag \\
 &\leq 3 T^{3/4}\left[ \frac{\Delta}{\gamma} + \frac{8}{3}\left(  C_1 \sigma  + C_2 \ell \gamma \right) \log(T)  +  \frac{ 2 \ell \gamma }{T^{1/2}}  \right] \notag \\
 &\leq C T^{3/4}\left[ \frac{\Delta}{\gamma} +  \left( \sigma  +  \ell \gamma \right) \log(T)  \right]. \nonumber
  \end{align}

\end{proof}

\begin{lemma}\label{le:hat_et_bound}
Under the setting of Theorem~\ref{thm:nsgdm}, there exist numerical constants $C_1, C_2 > 0$ such that for all $t \geq 1$,
$$
 \Ep{\norm{ \hat{e}_{t} }} \leq C_1 \sigma \alpha_t^{\nicefrac{1}{2}} + C_2 \ell \gamma_t \alpha_t^{-1}, 
$$
$$
\sum_{t=0}^{T-1} \gamma_t \Ep{ \norm{  \hat{e}_{t} }} \leq  \left( C_1 \sigma \gamma  + C_2 \ell \gamma^2 \right) \log(T)  ,
$$
where $\hat e_t = {g}_t - \nabla f(x_t)$. 
\end{lemma}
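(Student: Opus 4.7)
The plan is to derive a one-step recursion for $u_t := \mathbb{E}\|\hat e_t\|^2$ and solve it under the prescribed schedules $\alpha_t = \sqrt{2}/\sqrt{t+2}$, $\gamma_t = \gamma/(t+1)^{3/4}$. Writing $\zeta_{t+1} := g(x_{t+1};\xi_{t+1}) - \nabla f(x_{t+1})$, I decompose
$$
\hat e_{t+1} = (1-\alpha_t)\,\hat e_t + (1-\alpha_t)\bigl(\nabla f(x_t) - \nabla f(x_{t+1})\bigr) + \alpha_t\,\zeta_{t+1}.
$$
Because $x_{t+1}$ and $\hat e_t$ are measurable with respect to the filtration $\mathcal{F}_t$ generated by $\xi_0,\dots,\xi_t$, while $\zeta_{t+1}$ is conditionally mean-zero given $\mathcal{F}_t$ with second moment at most $\sigma^2$, the cross term involving $\zeta_{t+1}$ vanishes upon taking conditional expectation of $\|\hat e_{t+1}\|^2$.

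Applying Young's inequality with parameter $\alpha_t$ to the remaining squared norm, combined with $(1-\alpha_t)^2(1+\alpha_t) \leq 1-\alpha_t$ and the identity $\|x_{t+1}-x_t\| = \gamma_t$ from the NSGD-M update (so that $\|\nabla f(x_t)-\nabla f(x_{t+1})\| \leq \ell\gamma_t$ by Assumption~\ref{assume:smoothness}), I obtain
$$
u_{t+1} \;\leq\; (1-\alpha_t)\,u_t \;+\; \frac{2\ell^2\gamma_t^2}{\alpha_t} \;+\; \alpha_t^2\sigma^2 .
$$

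I then solve this recursion via the ansatz $u_t \leq A\sigma^2 \alpha_t + B\ell^2 \gamma_t^2/\alpha_t^2$. The key algebraic inputs are $\alpha_{t+1}/\alpha_t \geq 1 - \alpha_t/2$ (monotonicity of $1/\sqrt{t+2}$) and $(\gamma_t^2/\alpha_t^2)(1-\alpha_t) \leq \gamma_{t+1}^2/\alpha_{t+1}^2$, both easy to verify for the specified schedules. Inductively this produces absolute constants $A, B$ with $u_t \leq A\sigma^2\alpha_t + B\ell^2\gamma_t^2/\alpha_t^2$, after which Jensen's inequality together with $\sqrt{a+b}\leq \sqrt{a}+\sqrt{b}$ yields the pointwise bound $\mathbb{E}\|\hat e_t\| \leq \sqrt{u_t}\leq C_1\sigma\alpha_t^{1/2} + C_2\ell\gamma_t/\alpha_t$.

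For the cumulative bound I substitute the schedules to obtain $\gamma_t\alpha_t^{1/2} = \Theta(\gamma/(t+1))$ and $\gamma_t^2/\alpha_t = \Theta(\gamma^2/(t+1))$, both harmonic; summing from $t=0$ to $T-1$ picks up a single $\log T$ factor and yields the second inequality after redefining the constants. The main technical hurdle is the recursion-solving step: the inductive ansatz must be matched term by term, which requires exploiting the precise rate $\alpha_t = \Theta(1/\sqrt{t+2})$ so that the bias contribution $\ell^2\gamma_t^2/\alpha_t$ and the variance contribution $\alpha_t^2\sigma^2$ both scale like $1/(t+2)$ and remain aligned with the contraction $1-\alpha_t$; any polynomial mis-tuning of the momentum parameter would destroy this alignment and force a strictly worse rate.
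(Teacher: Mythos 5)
Your proof is correct in substance but follows a genuinely different route from the paper. You derive a one-step recursion for the second moment $u_t=\mathbb{E}\|\hat e_t\|^2$ (cross term killed by conditional unbiasedness, bias separated by Young's inequality) and close it with the inductive ansatz $u_t\le A\sigma^2\alpha_t+B\ell^2\gamma_t^2/\alpha_t^2$, finishing with Jensen. The paper instead \emph{unrolls} the recursion $\hat e_{t+1}=(1-\alpha_t)\hat e_t+\alpha_t e_{t+1}+(1-\alpha_t)S_t$ all the way to $t=0$, splits the result into an initial term, a martingale sum (bounded in $L^2$ via pairwise orthogonality of the $e_t$'s and then Jensen), and a deterministic bias sum, and then invokes an off-the-shelf sum-product estimate (Lemma~\ref{le:sum_prod_bound1}) to evaluate $\sum_t\gamma_t\prod_\tau(1-\alpha_\tau)$ and $\sum_t\alpha_t^2\prod_\tau(1-\alpha_\tau)$. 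Your version is self-contained and more elementary, at the cost of slightly lossier constants from the Young step; the paper's version is modular and transfers immediately to other polynomial schedules $(p,q)$ covered by the cited lemma. One place where your sketch is thinner than it should be: the stated helper inequality $(1-\alpha_t)\gamma_t^2/\alpha_t^2\le\gamma_{t+1}^2/\alpha_{t+1}^2$ is not by itself enough to propagate the $B$-term --- you need the quantitative gap $\gamma_{t+1}^2/\alpha_{t+1}^2-(1-\alpha_t)\gamma_t^2/\alpha_t^2\ge(2/B)\,\gamma_t^2/\alpha_t$ to absorb the forcing term $2\ell^2\gamma_t^2/\alpha_t$. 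This does hold for $t\ge1$ with an absolute constant $B$ (one checks $\frac{\gamma_{t+1}^2\alpha_t^2}{\gamma_t^2\alpha_{t+1}^2}-(1-\alpha_t)\ge\alpha_t(1-\alpha_t)$ and $1-\alpha_t$ is bounded away from $0$ for $t\ge1$), and the base case $t=0$ is harmless since $\alpha_0=1$ annihilates $u_0$ --- the same device the paper uses --- but you should make that quantitative step explicit in a full write-up.
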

\begin{proof}
Define $e_t = g(x_t;\xi_t) - \nabla f(x_t)$, $S_t = \nabla f(x_t) - \nabla f(x_{t+1})$. Then 
\begin{eqnarray}
  \hat{e}_{t+1} &=& {g}_{t+1} - \nabla f(x_{t+1}) \notag \\
  & = & (1- \alpha_{t}) {g}_{t} + \alpha_{t} g(x_{t+1}; \xi_{t+1}) - \nabla f(x_{t+1}) \notag \\
  &=& (1- \alpha_t)  \hat{e}_t + \alpha_t  \epsilon_{t+1} + (1- \alpha_t) S_t  \notag .
\end{eqnarray}
Unrolling the recursion from $t = T-1$ to $t = 0$, we have
    \begin{eqnarray}\label{eq:hat_et_unrolled}
    \hat{e}_{T} &= &  \left( \prod_{t = 0}^{T-1} (1 - \alpha_{t}) \right) \hat{e}_0 + \sum_{t = 0}^{T-1} \alpha_{t} e_{t+1} \prod_{\tau = t + 1 }^{T-1} (1 - \alpha_{\tau}) + \sum_{t = 0}^{T-1} (1-\alpha_{t}) S_{t} \prod_{\tau = t + 1}^{T-1} (1 - \alpha_{\tau})\,. 
\end{eqnarray}
Define the $\sigma$-field $\mathcal{F}_{t} := \sigma( \left\{x_0, \xi_0, \dots, \xi_{t-1} \right\} )$. Notice that for any~$t_2 > t_1 \geq 0$  we have 

\begin{equation}\label{eq:et1_et2_independence}
\Ep{ \langle e_{t_1}, e_{t_2} \rangle }  = \Ep{ \Ep{ \langle e_{t_1}, e_{t_2} \rangle | \mathcal{F}_{t_2} } } = \Ep{ \langle e_{t_1}, \Ep{  e_{t_2}  | \mathcal{F}_{t_2} }  \rangle  } = 0.
\end{equation}
Then taking norm, applying expectation on both sides of \eqref{eq:hat_et_unrolled} and using $\Ep{\|\hat e_0\|} \leq \sigma $, we have
\begin{eqnarray}
    \Ep{\norm{ \hat{e}_{T} }} & \leq  &  \left( \prod_{t = 0}^{T-1} (1 - \alpha_{t}) \right) \sigma + \Ep{ \left\| \sum_{t = 0}^{T-1} \alpha_{t} e_{t+1} \prod_{\tau = t + 1 }^{T-1} (1 - \alpha_{\tau}) \right\| } \notag\\
    &&\qquad + \Ep{ \left\| \sum_{t = 0}^{T-1} (1-\alpha_{t}) S_{t} \prod_{\tau = t + 1}^{T-1} (1 - \alpha_{\tau})  \right\| } \,   \notag \\
    & \leq &  \left( \prod_{t = 0}^{T-1} (1 - \alpha_{t}) \right) \sigma + \left(\Ep{ \left\| \sum_{t = 0}^{T-1} \alpha_{t} e_{t+1} \prod_{\tau = t + 1 }^{T-1} (1 - \alpha_{\tau}) \right\|^2 } \right)^{\nicefrac{1}{2}} \notag \\
    && \qquad +  \sum_{t = 0}^{T-1} (1-\alpha_{t}) \Ep{ \left\| S_{t} \right\| } \prod_{\tau = t + 1}^{T-1} (1 - \alpha_{\tau})   \,   \notag \\
    & \leq &  \left( \prod_{t = 0}^{T-1} (1 - \alpha_{t}) \right) \sigma + \left( \sum_{t = 0}^{T-1} \alpha_{t}^2 \Ep{ \left\|e_{t+1}\right\|^2} \prod_{\tau = t + 1 }^{T-1} (1 - \alpha_{\tau})^2  \right)^{\nicefrac{1}{2}} \notag \\
    &&\qquad +  \ell  \sum_{t = 0}^{T-1} (1-\alpha_{t}) \gamma_t \prod_{\tau = t + 1}^{T-1} (1 - \alpha_{\tau})    \,   \notag \\
    & \leq &  \left( \prod_{t = 0}^{T-1} (1 - \alpha_{t}) \right) \sigma +  \left( \sum_{t = 0}^{T-1} \alpha_{t}^2  \prod_{\tau = t + 1 }^{T-1} (1 - \alpha_{\tau})  \right)^{\nicefrac{1}{2}} \sigma +  \left( \sum_{t = 0}^{T-1} \gamma_t \prod_{\tau = t + 1}^{T-1} (1 - \alpha_{\tau}) \right) \ell   \notag \, ,
\end{eqnarray}
where the first inequality holds by Jensen's inequality applied to $x \mapsto x^2$, the second inequality follows by \eqref{eq:et1_et2_independence} and the bound $\norm{S_t} \leq \ell \norm{x_{t+1}- x_t} = \ell \gamma_t$. The last step is due to bounded variance $\Ep{\|\hat \epsilon_0\|} \leq \sigma $ and $\alpha_t \leq 1$.

By the choice of momentum sequence, we have $\alpha_0=1$ and the first term is zero. By Lemma~\ref{le:sum_prod_bound1}, there exist numerical constants $C_1, C_2 > 0$ such that 
$$
\left( \sum_{t = 0}^{T-1} \alpha_{t}^2  \prod_{\tau = t + 1 }^{T-1} (1 - \alpha_{\tau})  \right)^{\nicefrac{1}{2}} \leq C_1 \alpha_T^{\nicefrac{1}{2}}, \qquad \left( \sum_{t = 0}^{T-1} \gamma_t \prod_{\tau = t + 1}^{T-1} (1 - \alpha_{\tau}) \right) \leq C_2 \gamma_T \alpha_T^{-1}.
$$ 
Therefore, for all $T \geq 1$, we have 
$$
 \Ep{\norm{ \hat{e}_{T} }} \leq  C_1 \sigma \alpha_T^{\nicefrac{1}{2}} + C_2 \ell \gamma_T \alpha_T^{-1}.
$$

\begin{eqnarray}
 \sum_{t=0}^{T-1} \gamma_t \Ep{ \norm{  \hat{e}_{t} }} &\leq&  C_1 \sigma \sum_{t=0}^{T-1} \gamma_t \alpha_t^{\nicefrac{1}{2}} + C_2 \ell \sum_{t=0}^{T-1} \gamma_t^2 \alpha_t^{-1} \notag \\
 &\leq&  C_1 \sigma \gamma \sum_{t=0}^{T-1} (t+1)^{-\nicefrac{3}{4}} (t+1)^{-\nicefrac{1}{4}} + C_2 \ell \gamma^2 \sum_{t=0}^{T-1} (t+1)^{-\nicefrac{3}{2}} (t+1)^{\nicefrac{1}{2}}\notag \\
 &\leq& \left( C_1 \sigma \gamma  + C_2 \ell \gamma^2 \right) \log(T) \notag .
\end{eqnarray}

\end{proof}

\begin{lemma}[Lemma 15 in \citep{Fatkhullin_SPG_FND_2023}]\label{le:sum_prod_bound1}
Let $q \in [0, 1)$, $p \geq 0$, $\gamma_0 > 0$ and let $\eta_t =  \rbr{ \frac{2}{t+2} }^q$, $\gamma_t = \gamma_0 \rbr{ \frac{1}{t+1} }^p$ for every integer~$t$. Then for any integers~$t$ and~$T \geq 1$, it holds 
    $$
     \sum_{t = 0}^{T-1} \gamma_{t} \prod_{\tau = t+1 }^{T-1} (1 - \eta_{\tau}) \leq C \gamma_t \eta_T^{-1},
    $$
   where $C := 2^{p-q}(1-q)^{-1} t_0 \exp \left(2^q(1-q) t_0^{1-q}\right)+2^{2 p+1-q}(1-q)^{-2}$ and $t_0 := \max \left\{\left(\frac{p}{(1-q) 2^q}\right)^{\frac{1}{1-q}}, 2\left(\frac{p-q}{(1-q)^2}\right)\right\}^{\frac{1}{1-q}}$.
\end{lemma}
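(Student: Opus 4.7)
The goal is to bound $S_T := \sum_{t=0}^{T-1} \gamma_t \prod_{\tau=t+1}^{T-1}(1-\eta_\tau)$ by a constant multiple of $\gamma_T \eta_T^{-1}$ (I read the $\gamma_t$ on the right-hand side as a typo for $\gamma_T$, since $t$ is a bound variable on the left). My plan follows the usual three-step recipe for such weighted one-sided telescoping sums: (i) convert the product into an exponential, (ii) lower-bound the exponent via an integral comparison, and (iii) split the sum at the threshold $t_0$ from the statement and handle the head and the tail separately.

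For step (i), since $\eta_\tau \in [0,1]$, $\prod_{\tau=t+1}^{T-1}(1-\eta_\tau) \leq \exp\bigl(-\sum_{\tau=t+1}^{T-1}\eta_\tau\bigr)$. For step (ii), the map $\tau \mapsto (2/(\tau+2))^q$ is non-increasing, so a sum-vs-integral comparison gives $\sum_{\tau=t+1}^{T-1}\eta_\tau \geq c_q\bigl(T^{1-q}-(t+2)^{1-q}\bigr)$ with $c_q := 2^q/(1-q)$. Substituting $\gamma_t = \gamma_0 (t+1)^{-p}$ yields $S_T \leq \gamma_0 \sum_{t=0}^{T-1}(t+1)^{-p}\exp\bigl(-c_q T^{1-q}+c_q(t+2)^{1-q}\bigr)$. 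For the head piece $t < t_0$, I would bound $(t+1)^{-p} \leq 1$ and the exponential by its value at $t_0$, obtaining a contribution of at most $\gamma_0\, t_0\, \exp(c_q (t_0+1)^{1-q})\exp(-c_q T^{1-q})$; comparing with $\gamma_T \eta_T^{-1} = \gamma_0 T^{-p}\cdot 2^{-q}(T+2)^q$ reproduces the first summand $2^{p-q}(1-q)^{-1} t_0 \exp(2^q(1-q) t_0^{1-q})$ of $C$. For the tail $t \geq t_0$, apply the first-order inequality $T^{1-q}-(t+2)^{1-q}\geq (1-q)T^{-q}(T-t-2)$, which turns the summand into a geometric factor in $T-t$ with ratio $\exp(-2^q T^{-q})$ multiplied by the polynomial prefactor $(t+1)^{-p}$. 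Summing the geometric series produces a factor of order $T^q \sim \eta_T^{-1}$; bounding $(t+1)^{-p} \leq 2^p (T+1)^{-p}$ for $t$ not too far from $T$ (and absorbing the remaining range via the very small $\exp(-c_q T^{1-q}/2)$ factor) gives the second summand $2^{2p+1-q}(1-q)^{-2}$ of $C$ times $\gamma_T \eta_T^{-1}$.

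The main obstacle is the calibration of $t_0$: it has to be simultaneously large enough that the linearization used in the tail is valid uniformly on $[t_0, T-1]$ (this is precisely what the condition $t_0 \geq (p/((1-q) 2^q))^{1/(1-q)}$ enforces, so that the exponential decay dominates the polynomial prefactor $(t+1)^{-p}$ term-by-term) and that the conversion of the tail sum into a clean geometric series with ratio depending only on $\eta_T$ is legitimate (controlled by $t_0 \geq 2(p-q)/(1-q)^2$), yet small enough that the head estimate $t_0 \exp(c_q t_0^{1-q})$ does not overwhelm $\gamma_T\eta_T^{-1}$ for moderate $T$. Balancing these two constraints is what pins down the specific form of $t_0$ in the statement, and tracking the universal constants $2^{p-q}(1-q)^{-1}$ and $2^{2p+1-q}(1-q)^{-2}$ through the integral comparisons is routine but tedious bookkeeping that I would relegate to a careful write-up rather than attempt to grind through here.
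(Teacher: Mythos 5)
The paper never proves this lemma: it is imported verbatim as Lemma~15 of \citep{Fatkhullin_SPG_FND_2023}, so there is no in-paper argument to compare yours against. Judged on its own, your plan is the standard (and surely the intended) route: use $1-x\le e^{-x}$, lower-bound $\sum_{\tau=t+1}^{T-1}\eta_\tau$ by an integral comparison as $\tfrac{2^q}{1-q}\bigl(T^{1-q}-(t+2)^{1-q}\bigr)$ up to boundary adjustments, split the sum at $t_0$, bound the head crudely, and linearize $T^{1-q}-(t+2)^{1-q}\ge (1-q)T^{-q}(T-t-2)$ so that the tail becomes a geometric series whose sum is of order $T^{q}\sim\eta_T^{-1}$. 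You are also right to read the $\gamma_t$ on the right-hand side as $\gamma_T$; that is exactly how the lemma is invoked in the proof of Lemma~\ref{le:hat_et_bound}. Each of these steps is individually valid.

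Two caveats. First, the proposal is a plan rather than a proof: everything that pins down the specific $C$ and $t_0$ in the statement is deferred, and as written your head estimate does not ``reproduce the first summand of $C$'' --- it carries the factor $\exp\bigl(\tfrac{2^q}{1-q}(t_0+1)^{1-q}\bigr)$, which for $q\in(0,1)$ is strictly larger than the $\exp\bigl(2^q(1-q)\,t_0^{1-q}\bigr)$ appearing in the stated constant, so matching the lemma's $C$ exactly would require either a sharper head estimate or exploiting the compensating factor $\exp\bigl(-\tfrac{2^q}{1-q}T^{1-q}\bigr)$ when maximizing over $T$. Second, the case $T\le t_0$ needs separate words: there the ``head'' is the entire sum and the exponent $\tfrac{2^q}{1-q}\bigl((t+2)^{1-q}-T^{1-q}\bigr)$ can be slightly positive, which is harmless but must be controlled explicitly (it is bounded by a universal constant). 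Neither issue is a conceptual gap --- your argument does establish the bound with some constant $C(p,q)$ of the displayed shape, which is all the paper ever uses from this lemma --- but as submitted it does not verify the particular $C$ and $t_0$ asserted in the statement.
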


\section{Proofs for Scalar AMSGrad and AdaGrad in Section \ref{sec:adaptive}}
\label{sec:apdx_adaptive_amsgrad}

\bigskip

The following is an extended version of Theorem \ref{theorem:ams_grad} including $\gamma_t=\frac{\gamma}{(t+1)^{\alpha}}$ with  $0 < \alpha < 1$. 

\begin{theorem}
\label{thm:amsgrad_general_ss}
    Under Assumption \ref{assume:smoothness},
    if we run AMSGrad-norm with $\gamma_t=\frac{\gamma}{(t+1)^{\alpha}}$, $v_0 > 0$ and $\beta_1=\beta_2=0$ in the deterministic setting, then
    for any $\gamma > 0$ and $0 < \alpha < 1$, if $v_0 < \gamma \ell$
    \[
        \frac{1}{T} \sum_{t=0}^{T-1} \|\nabla f(x_t)\|^2 
         \leq  \frac{2 \Delta }{\gamma T^{1-\alpha}}  \max\{v_0, \sqrt{2\ell\Delta} \} ,
    \] 
    if $v_0 \geq \gamma \ell$
    \[
        \frac{1}{T} \sum_{t=0}^{T-1} \|\nabla f(x_t)\|^2 
        \leq \frac{\left( \frac{\ell \gamma}{v_{0}}\right)^{\frac{1}{\alpha}} \gamma^2 \ell^2 }{T} + \frac{2(M + \Delta)  }{\gamma T^{1-\alpha}} \max\{\gamma\ell, \sqrt{2\ell(M+\Delta)} \} ,
    \] 
    where 
    \begin{align*}
    M
 = \left\{
\begin{aligned}
    & \ell\gamma^2 \left( 1+ \log \left( \frac{\ell\gamma}{v_0}\right)\right), \text{ when } \alpha = 1/2, \\
    &\frac{\ell\gamma^2}{2\left(1-2^{1-2\alpha} \right)}, \text{ when } 1/2 < \alpha  < 1, \\
    &\frac{\gamma(\ell\gamma)^{\frac{1}{\alpha} - 1}}{2(1-2\alpha)v_0^{\frac{1}{\alpha}-2}}, \text{ when } 0 < \alpha  < 1/2.
\end{aligned}
\right.
\end{align*}
\end{theorem}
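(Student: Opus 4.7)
The plan is to adapt the standard $\ell$-smoothness descent analysis to AMSGrad-norm, exploiting the key structural fact that $\hat v_{t+1}^2 \geq \|\nabla f(x_t)\|^2$ (since $\hat v_{t+1}^2 = \max\{\hat v_t^2, v_{t+1}^2\}$ with $v_{t+1}^2 = \|\nabla f(x_t)\|^2$ in the deterministic, $\beta_2 = 0$ case). From $\ell$-smoothness applied to the update $x_{t+1} = x_t - \frac{\gamma_t}{\hat v_{t+1}} \nabla f(x_t)$, I would first derive the clean one-step bound
\begin{equation*}
f(x_{t+1}) \;\leq\; f(x_t) \;-\; \frac{\gamma_t \|\nabla f(x_t)\|^2}{\hat v_{t+1}} \;+\; \frac{\ell \gamma_t^2}{2},
\end{equation*}
where the quadratic error term collapses to $\ell\gamma_t^2/2$ precisely because $\|\nabla f(x_t)\|^2/\hat v_{t+1}^2 \leq 1$. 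Telescoping, and lower bounding $\gamma_t \geq \gamma/T^\alpha$ with $\hat v_{t+1} \leq \hat v_T$, gives the aggregate inequality
\begin{equation*}
\sum_{t=0}^{T-1}\|\nabla f(x_t)\|^2 \;\leq\; \frac{\hat v_T T^\alpha}{\gamma}\left(\Delta + \frac{\ell}{2}\sum_{t=0}^{T-1}\gamma_t^2\right),
\end{equation*}
so the remaining task is to (i) control $\hat v_T$ and (ii) evaluate the $\sum \gamma_t^2$ tail.

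Next, I would split the trajectory at $\tau = \lceil (\gamma\ell/v_0)^{1/\alpha}-1\rceil$, the first index with $\gamma_\tau\ell\leq v_0 \leq \hat v_{\tau+1}$. For $t\geq\tau$ the effective stepsize $\eta_t \leq 1/\ell$, so descent strengthens to $f(x_{t+1}) \leq f(x_t) - \frac{\gamma_t}{2\hat v_{t+1}}\|\nabla f(x_t)\|^2$, and $f$ is monotone decreasing. For $t<\tau$ only the weaker one-sided bound $f(x_{t+1})\leq f(x_t) + \ell\gamma_t^2/2$ is available, so $f(x_\tau) - f^* \leq \Delta + \frac{\ell}{2}\sum_{s=0}^{\tau-1}\gamma_s^2$. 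The partial sum is evaluated via elementary integral estimates with $\tau \sim (\gamma\ell/v_0)^{1/\alpha}$: it produces the three stated expressions for $M$ — a $\log(\ell\gamma/v_0)$ factor when $\alpha=1/2$, a constant $1/(1-2^{1-2\alpha})$ when $\alpha>1/2$, and the polynomial $\tau^{1-2\alpha}/(1-2\alpha)$ when $\alpha<1/2$. Combining both phases, $f(x_t)-f^* \leq \Delta + M$ uniformly in $t$, and therefore $\|\nabla f(x_t)\|^2 \leq 2\ell(\Delta+M)$ by smoothness, which yields $\hat v_T \leq \max\{v_0,\sqrt{2\ell(\Delta+M)}\}$. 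In Case 2 ($v_0 \geq \gamma\ell$), $\tau = 0$ and the residual first-step contribution $\ell\gamma_0^2/2$ after plugging in $\gamma_0 = \gamma$ and dividing by $T$ gives the boundary term $(\gamma\ell/v_0)^{1/\alpha}\gamma^2\ell^2/T$ (equivalently, the $\Delta$ term absorbs this since $\tau=0$, so the explicit $(\gamma\ell/v_0)^{1/\alpha}$ factor captures how "short" the transient is).

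Finally, substituting these $\hat v_T$ and $M$ bounds into the aggregate inequality yields the two stated results. The main obstacle is the tighter bookkeeping needed in Case 1 to drop the $M$ term in favor of $\sqrt{2\ell\Delta}$: one must argue that the very gradients responsible for any $f$-value increase during the transient automatically inflate $\hat v_{t+1}$, which in turn drives $\eta_t$ below $1/\ell$ and re-enables descent, so that the maximum function-value excess seen along the trajectory is still controlled by $\Delta$ rather than $\Delta + M$. This self-correcting coupling between gradient magnitude and adaptive stepsize is the distinctive feature of AMSGrad-norm that the proof must exploit, and it is what separates this analysis sharply from the exponential-in-$\eta^2\ell^2$ behavior exhibited by plain SGD in Theorem~\ref{thm:lower_bound}. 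The complementary lower-order step — verifying the three algebraic formulas for $M$ from the $\sum_{s=0}^{\tau-1}(s+1)^{-2\alpha}$ integrals — is routine but must be done carefully to match the exact constants in the theorem statement.
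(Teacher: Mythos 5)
Your overall skeleton --- split at the first time $\tau$ the effective stepsize drops to $1/\ell$, bound the function increase on $[0,\tau)$ by $\frac{\ell}{2}\sum_{t<\tau}\gamma_t^2 = M$ using $\|\nabla f(x_t)\|^2/\hat v_{t+1}^2\le 1$, deduce $\|\nabla f(x_t)\|^2\le 2\ell(M+\Delta)$ and hence a uniform bound on $\hat v_t$, then telescope the strengthened descent inequality for $t\ge\tau$ --- is exactly the paper's argument, and your evaluation of the three $M$ integrals matches. Two steps do not go through, however. First, your final average ranges over all $t<T$, but the descent telescoping only controls $\sum_{t\ge\tau}\|\nabla f(x_t)\|^2$; the transient-phase gradients must be bounded separately. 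The needed observation is that for $t<\tau$ one has $\eta_t>1/\ell$, hence $\|\nabla f(x_t)\|\le \hat v_{t+1}<\ell\gamma_t\le\ell\gamma$, giving $\sum_{t<\tau}\|\nabla f(x_t)\|^2\le\tau\gamma^2\ell^2\le(\ell\gamma/v_0)^{1/\alpha}\gamma^2\ell^2$. That is where the first term of the second display actually comes from; your attribution of it to a ``residual first-step contribution'' in the $\tau=0$ case is incorrect --- when $\tau=0$ there is no such term at all.

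Second, the step you single out as the ``main obstacle'' --- dropping $M$ in favor of $\sqrt{2\ell\Delta}$ when $v_0<\gamma\ell$ via a ``self-correcting coupling'' --- is not proved and, as stated, would fail: during the transient the function value genuinely can rise by $\Theta(M)$, and nothing in the adaptive mechanism caps the excess at $\Delta$. The paper never makes such an argument. Its $M$-free bound is established only in the case $\tau=0$ (no transient), where $f$ decreases from the first step and $\hat v_t\le\max\{v_0,\sqrt{2\ell\Delta}\}$ holds trivially; the $M$-dependent bound is established for $\tau\ge1$, which forces $v_0<\gamma\ell$. (The conditions printed in the theorem statement appear to be swapped relative to the proof --- note, for instance, that $\log(\ell\gamma/v_0)$ inside $M$ is negative when $v_0>\gamma\ell$ --- and taking them at face value is what led you to invent this obstacle.) If you align the two cases with $\tau=0$ versus $\tau\ge1$ and add the transient-gradient bound above, your argument closes without any self-correction lemma.
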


\begin{proof}
The effective stepsize of AMSGrad-norm contains a maximum over all gradient
norms in the denominator. As it is desirable to find a lower bound for the
effective stepsize, we begin by bounding the gradient norms.

Let $\tau$ be the first iteration where the effective stepsize is less or equal
to $1/\ell$, i.e., $\eta_{\tau - 1} > 1/\ell$ and $\eta_\tau \leq 1 / \ell$. First, we assume $\tau \geq 1$, i.e., $v_0 < \gamma \ell$.
The time stamp $\tau$ itself is naturally bounded by
\[
    \eta_{\tau - 1} = \frac{\gamma}{\tau^\alpha v_\tau} > \frac{1}{\ell} 
    \quad \Longrightarrow \quad 
    \tau < \left( \frac{\ell \gamma}{v_{\tau}}\right)^{\frac{1}{\alpha}}
    \leq \left( \frac{\ell \gamma}{v_{0}}\right)^{\frac{1}{\alpha}}.
\]
 We have
\begin{equation} \label{eq:amsgrad_befre_tau}
        \sum_{t=0}^{\tau -1} \| \nabla f(x_t) \|^2 \leq 
    \tau \gamma^2 \ell^2 \leq \left( \frac{\ell \gamma}{v_{0}}\right)^{\frac{1}{\alpha}} \gamma^2 \ell^2.
\end{equation}
By $\ell$-smoothness of $f(\cdot)$,
\begin{align} \nonumber
    f(x_{t+1}) & \leq f(x_t) + \langle \nabla f(x_t), x_{t+1} - x_t \rangle + \frac{\ell}{2}\|x_{t+1}- x_t \|^2 \\ \label{eq:step_decrease}
    & = f(x_t) - \eta_t \| \nabla f(x_t)\|^2 + \frac{\ell\eta_t^2}{2} \|\nabla f(x_t)\|^2 \\ \nonumber
    &\leq f(x_t) + \frac{\ell\eta_t^2}{2} \|\nabla f(x_t)\|^2.
\end{align}
Therefore,
\begin{align*}
    f(x_{\tau}) - f(x_0) & \leq \frac{\ell}{2}\sum_{t=0}^{\tau - 1}\eta_t^2\| \nabla f(x_t) \|^2  = \frac{\ell}{2}\sum_{t=0}^{\tau-1} \frac{\gamma_t^2}{v_{t+1}^2} \| \nabla f(x_t) \|^2 
\leq \frac{\ell}{2}\sum_{t=0}^{\tau-1} \gamma_t^2 \\
& \leq \left\{
\begin{aligned}
    &\frac{\ell\gamma^2}{2} (1 + \log \tau), \text{ when } \alpha = 1/2, \\
    &\frac{\ell\gamma^2}{2\left(1-2^{1-2\alpha} \right)}, \text{ when } 1/2 < \alpha  < 1, \\
    &\frac{\ell\gamma^2\tau^{1-2\alpha}}{2(1-2\alpha)}, \text{ when } 0 < \alpha  < 1/2.
\end{aligned}
\right.
\end{align*}
We denote the right hand side as $M$. Also from  (\ref{eq:step_decrease}) and definition of $\tau$, we know that $f(x_t) \leq f(x_{\tau})$ for $t \geq \tau$ and therefore, for all $t \geq \tau$,
$$
f(x_t) - f^* = f(x_{\tau}) - f(x_0) + f(x_0) - f^* \leq M + \Delta,
$$
which implies
$$
\|\nabla f(x_t)\|^2 \leq 2\ell(f(x_t) - f^*) \leq 2\ell(M + \Delta).
$$
Therefore, we can bound for all $t \geq 0$, 
$$v_t \leq \max\{\gamma\ell, \sqrt{2\ell(M+\Delta)} \}. $$ 
For $t \geq \tau$, by (\ref{eq:step_decrease})
\begin{align*}
    f(x_{t+1}) - f(x_t) 
     \leq - \frac{\eta_t}{2} \| \nabla f(x_t) \|^2.
\end{align*}
By telescoping from $t=\tau$ to $T-1$, we get
\begin{align*}
    2 \left(f(x_\tau) - f(x_T)\right)
    &\geq \sum_{t=\tau}^{T-1} \eta_t \| \nabla f(x_t) \|^2 \\
    &= \sum_{t=\tau}^{T-1} \frac{\gamma}{(t+1)^{\alpha} v_{t+1}} \| \nabla f(x_t) \|^2 \\
    &\geq \sum_{t=\tau}^{T-1} \frac{\gamma}{T^{\alpha} v_{t+1}} \| \nabla f(x_t) \|^2 \\
    &\geq \sum_{t=\tau}^{T-1} \frac{\gamma}{T^{\alpha} \max\{\gamma\ell, \sqrt{2\ell(M+\Delta)} \}} \| \nabla f(x_t) \|^2.
\end{align*}
Then we have
\begin{align*}
    \sum_{t=\tau}^{T-1} \| \nabla f(x_t) \|^2
    &\leq \frac{2}{\gamma} \left(f(x_\tau) - f(x_T)\right) T^{\alpha} \max\{\gamma\ell, \sqrt{2\ell(M+\Delta)} \} \\
    &\leq \frac{2}{\gamma} \left(f(x_\tau) - f(x^*)\right) T^{\alpha} \max\{\gamma\ell, \sqrt{2\ell(M+\Delta)} \}\\
    &\leq \frac{2(M + \Delta)}{\gamma} T^{\alpha} \max\{\gamma\ell, \sqrt{2\ell(M+\Delta)} \}.
\end{align*}
Combining with (\ref{eq:amsgrad_befre_tau}),
we obtain 
\begin{equation*}
    \sum_{t = 0}^{T-1} \| \nabla f(x_t) \|^2 \leq  \left( \frac{\ell \gamma}{v_{0}}\right)^{\frac{1}{\alpha}} \gamma^2 \ell^2 + \frac{2(M + \Delta)  T^{\alpha}}{\gamma} \max\{\gamma\ell, \sqrt{2\ell(M+\Delta)} \}.
\end{equation*}
When $\tau = 0$, we have
\begin{align*}
    2 \left(f(x_0) - f(x_T)\right)
    &\geq
     \sum_{t=0}^{T-1} \frac{\gamma}{T^{\alpha} v_{t+1}} \| \nabla f(x_t) \|^2 \geq \sum_{t=\tau}^{T-1} \frac{\gamma}{T^{\alpha} \max\{v_0, \sqrt{2\ell\Delta} \}} \| \nabla f(x_t) \|^2,
\end{align*}
which implies
\begin{equation*}
    \sum_{t = 0}^{T-1} \| \nabla f(x_t) \|^2 \leq  \frac{2 \Delta T^{\alpha}}{\gamma}  \max\{v_0, \sqrt{2\ell\Delta} \}.
\end{equation*}

\begin{remark}
    For any $0 < \alpha < 1$, if we compare simplified AMSGrad with $\gamma_t = \frac{\gamma}{(t+1)^{\alpha}}$ to SGD with $\eta_t = \frac{\eta}{(t+1)^{\alpha}}$ in the deterministic case (setting $\sigma = 0$ in Theorem \ref{thm:general_stepsize}), we observe that they achieve the same convergence rate. However, the complexity of simplified AMSGrad only includes polynomial term in $\gamma$ and $\ell$, while that of SGD includes an exponential term in $(\eta\ell)^{1/\alpha}$.
\end{remark}

\end{proof}

In the following, we will first provide the lower bounds for scalar version of AMSGrad (referred to as AMSGrad-norm) with each $\alpha \in (0, 1)$ and  discuss why it may fail with $\alpha = 0$ when problem parameters are unknown, which means that it can not achieve the optimal complexity $\cO(\epsilon^{-2})$ in the deterministic setting. Second, we show that it also fails to achieve the optimal convergence rate in the stochastic setting when stochastic gradients are unbounded.
To make the results more general,
we consider the standard scalar AMSGrad with momentum hyper-parameters $\beta_1$ and $\beta_2$, which is presented in Algorithm~\ref{alg:amsgrad_full}.

Before proceeding to our results, we present a lemma which is handy for
conducing lower bounds for SGD-like algorithms with momentum (see Algorithm~\ref{alg:sgd_momentum}).
As long as an upper bound is known for stepsize $\eta_t$, we can derive a lower bound
similar to Proposition~1 in \citep{drori2020complexity}.

\begin{algorithm}[ht] 
    \caption{General SGD with Momentum}
    \begin{algorithmic}[1]
      \STATE \textbf{Input:} initial point $x_0$, momentum parameters $0 \leq \beta_1 < 1$ and
      initial moment $m_0$.
        \FOR{$t = 0,1,2,...$}
            \STATE sample  $\xi_{t}$
            \STATE $m_{t+1} = \beta_1 m_t + (1 - \beta_1) g(x_t; \xi_t)$
            \STATE set a stepsize $\eta_t > 0$
            \STATE  $x_{t+1} = x_t - \eta_t m_{t+1}$
        \ENDFOR
    \end{algorithmic} \label{alg:sgd_momentum}
\end{algorithm}

\begin{lemma} \label{lemma:lower_sgd_momentum}
    For any $\ell > 0$, $\Delta > 0$ and $T > 1$, there exists a
    $\ell$-smooth function $f: \mathbb{R} \rightarrow \mathbb{R}$, and $x_0$ with $f(x_0) - \inf_x f(x) \leq \Delta$, such that if we
    run Algorithm~\ref{alg:sgd_momentum} with deterministic gradients and $\eta_t \leq \tilde \eta_t$ for $t=0, 1, 2,..., T-1$,
    then we have
    \[
        \min_{t \in \{0, 1, ..., T-1\}} |\nabla f(x_t) | \geq \sqrt{\frac{\Delta}{16 \max\{1/\ell, \sum_{t=0}^{T-1} \tilde \eta_t\}}}.
    \]
\end{lemma}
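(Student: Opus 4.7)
The plan is to adapt the one-dimensional quadratic hard instance from Proposition~1 of \citet{drori2020complexity} to the momentum setting. Set $S := \max\{1/\ell, \sum_{t=0}^{T-1} \tilde\eta_t\}$ and take $f(x) := (x - x^*)^2/(4S)$ with $x^* := x_0 + 2\sqrt{\Delta S}$. A direct calculation gives smoothness $1/(2S) \leq \ell$ and $f(x_0) - \inf f = \Delta$, so $f$ is a valid choice. I will assume the standard initialization $m_0 = 0$. Introduce the shifted iterate $y_t := x_t - x^*$, so that $y_0 = -2\sqrt{\Delta S}$, $\nabla f(x_t) = y_t/(2S)$, and the algorithm becomes $m_{t+1} = \beta_1 m_t + (1-\beta_1)\,y_t/(2S)$, $y_{t+1} = y_t - \eta_t m_{t+1}$.

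The core of the proof is an induction on $t$ establishing the invariant $-2\sqrt{\Delta S} \leq y_t \leq -\sqrt{\Delta S}$. The base case $t=0$ is immediate. For the inductive step, the hypothesis $y_s \leq 0$ for all $s \leq t$ and $m_0 = 0$ let us unroll the momentum recursion to express $m_{s+1}$ as a convex combination of past (non-positive) gradients, so that $m_{s+1} \leq 0$ and $|m_{s+1}| \leq \max_{k \leq s} |\nabla f(x_k)| = \max_{k \leq s} |y_k|/(2S) \leq \sqrt{\Delta/S}$, where the last step uses $|y_s| \leq 2\sqrt{\Delta S}$. Since $m_{s+1} \leq 0$ and $\eta_s \geq 0$, the sequence $y_s$ is non-decreasing, which preserves the lower bound $y_{t+1} \geq y_0 = -2\sqrt{\Delta S}$. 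Telescoping the updates,
\[
y_{t+1} - y_0 \;=\; -\sum_{s=0}^{t} \eta_s\, m_{s+1} \;\leq\; \sqrt{\Delta/S}\,\sum_{s=0}^{T-1} \tilde\eta_s \;\leq\; \sqrt{\Delta/S}\cdot S \;=\; \sqrt{\Delta S},
\]
so $y_{t+1} \leq -2\sqrt{\Delta S} + \sqrt{\Delta S} = -\sqrt{\Delta S}$, closing the induction.

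From the invariant, $|\nabla f(x_t)| = |y_t|/(2S) \geq \sqrt{\Delta S}/(2S) = \tfrac{1}{2}\sqrt{\Delta/S} \geq \sqrt{\Delta/(16S)}$, which is the desired bound.

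The main obstacle is the sign-tracking required by the induction: unlike in the memoryless case, $m_{t+1}$ mixes the current gradient with past momentum, so a priori it could point in an unfavorable direction. Resolving this requires carefully exploiting that every gradient along the trajectory has the same sign (guaranteed by the $y_t \leq 0$ portion of the invariant) together with $m_0=0$, so that $m_{t+1}$ is a convex combination of one-sided gradients. Once this is in place, the rest is a routine telescoping of displacements bounded by $\sum \eta_s$ times the maximal gradient magnitude $\sqrt{\Delta/S}$.
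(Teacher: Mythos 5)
Your construction and mechanism are essentially the paper's: the same quadratic $x\mapsto (x-x^*)^2/(4S)$ with $S=\max\{1/\ell,\sum_t\tilde\eta_t\}$, initial distance $2\sqrt{\Delta S}$ from the minimizer, and the observation that the total displacement is at most $\sum_t\tilde\eta_t$ times the maximal gradient magnitude, hence at most half the initial distance. Your induction is correct as written, and it even yields the slightly better constant $\tfrac12\sqrt{\Delta/S}$.

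The one real discrepancy is your added assumption $m_0=0$. Algorithm~\ref{alg:sgd_momentum} takes an arbitrary initial moment $m_0$ as input (and the lemma is invoked for AMSGrad-norm, Algorithm~\ref{alg:amsgrad_full}, whose $m_0$ is likewise an arbitrary input), so the lemma needs to cover $m_0\neq 0$. Your invariant genuinely breaks there: unrolling gives $m_{s+1}=\beta_1^{s+1}m_0+(1-\beta_1)\sum_{k\le s}\beta_1^{s-k}\nabla f(x_k)$, which is no longer a sub-convex combination of one-signed gradients. A large $|m_0|$ pointing toward the minimizer can overshoot past $-\sqrt{\Delta S}$ (breaking the upper half of your invariant), while an $m_0$ pointing away inflates $\max_k|y_k|$ beyond $2\sqrt{\Delta S}$ and destroys your bound $|m_{s+1}|\le\sqrt{\Delta/S}$. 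The paper handles this by normalizing the sign of $m_0$ via a mirrored choice of $x_0$, letting $\tau$ be the first iteration with $m_\tau$ pointing downhill (so the iterate only moves away from the minimizer before $\tau$, which can only help), and then measuring the displacement from $x_{\tau-1}$ rather than $x_0$, bounding $m_t\le\nabla f(x_{\tau-1})$ for $t\ge\tau$ so that $x_t\ge\tfrac12 x_{\tau-1}\ge\tfrac12 x_0$. You should either incorporate this $\tau$-splitting or explicitly restrict the lemma (and its downstream uses) to $m_0=0$.
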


\begin{proof}
    We construct a quadratic function similar to Proposition~1 in \citep{drori2020complexity}.
    The following function is considered:
    \[
        f(x) = \frac{x^2}{4\max\left\{1/\ell, \sum_{t=0}^{T-1} \tilde \eta_t \right\}}.
    \]
    Without loss of generality, we assume the initial moment $m_0$ is non-positive, and we set the initial point $x_0$ as
    \[
        x_0 = \sqrt{\Delta \max\left\{1/\ell, \sum_{t=0}^{T-1} \tilde \eta_t \right\}}.
    \]
    Otherwise if the initial moment is set to be positive, then we let $x_0$ be negative and follow the same reasoning.

    Since $x_0$ is positive, the first gradient direction would be positive, i.e., $\nabla f(x_0) > 0$. Let $\tau$ be the first iteration such that $m_{\tau} > 0$. By the update rule and definition of $\tau$, it is obvious that $x_t \geq x_0$ for $t \leq \tau -1$. 
    If $T \leq \tau$, it trivially holds that $\nabla f(x_t) \geq \nabla f(x_0)$ for all $0 \leq t \leq T - 1$. Otherwise, 
    we have $m_{\tau} = \beta_1 m_{\tau - 1} + (1 - \beta_1)\nabla f(x_{\tau - 1}) \leq (1 - \beta_1)\nabla f(x_{\tau - 1})$. That is to say, the gradient estimation $m_\tau$ used in the
    $\tau$-th step has the correct direction but its magnitude is no larger than
    the actual gradient. Starting from the $\tau$-th iteration, $x_t$ will monotonically 
    move left towards the solution.
    Note that since our stepsize is small enough, i.e.,
    \[
    \eta_t \leq \tilde \eta_t < 2\max\left\{1/\ell, \sum_{t=0}^{T-1} \tilde \eta_t \right\},
    \]
    the updates will remain positive, i.e., $x_t > 0$ for $t \geq \tau$. By the update rule, we note that $x_{t+1} \leq x_t$ for $t \geq \tau$, and therefore $\nabla f(x_{t+1}) < \nabla f(x_t)$. 
    We can conclude that for any $t \geq \tau$, we have $m_t \leq \nabla f(x_{\tau-1})$.
    Then for $t \geq \tau - 1$ we have
    \begin{align*}
        x_t & = x_{\tau - 1} - \sum_{k=\tau - 1}^{t-1} \eta_t m_{t+1} \\
            &\geq x_{\tau - 1} - \sum_{k=\tau - 1}^{t-1} \tilde \eta_t \nabla f(x_{\tau-1}) \\
            &= x_{\tau - 1} - \sum_{k=\tau - 1}^{t-1} \frac{\tilde \eta_t}{2\max\left\{1/\ell, \sum_{t=0}^{T-1} \tilde \eta_t \right\}}x_{\tau-1} \\
            &\geq \frac{1}{2} x_{\tau - 1} \\
            &\geq \frac{1}{2} x_0.
    \end{align*}
    Then we conclude by 
    \begin{align*}
    |\nabla f(x_t)| = \frac{x_t}{2\max\left\{1/\ell, \sum_{t=0}^{T-1} \tilde \eta_t \right\}}
    \geq \frac{x_0}{4\max\left\{1/\ell, \sum_{t=0}^{T-1} \tilde \eta_t \right\}}
    =\sqrt{\frac{\Delta}{16 \max\{1/\ell, \sum_{t=0}^{T-1} \tilde \eta_t\}}}.
    \end{align*}
\end{proof}

Now we proceed to provide the lower bound for deterministic case.

\begin{theorem}
\label{thm:amsgrad_deter_lower_bound}
    For any $\ell > 0$, $\Delta > 0$ and $T > 1$, there exists a
    $\ell$-smooth function $f: \mathbb{R} \rightarrow \mathbb{R}$ and $x_0$ with $f(x_0) - \inf_x f(x) \leq \Delta$, such that if we
    run Algorithm~\ref{alg:amsgrad_full} with deterministic gradients, $0 < v_0 \leq \frac{\ell \gamma}{2}$,
    and $\gamma_t = \frac{\gamma}{{(t+1)}^{\alpha}}$ with $\gamma \leq \frac{4\Delta}{v_0}$, we have
    (1) if $0 < \alpha < 1$, for any $0 \leq \beta_1 < 1$
    and $0 \leq \beta_2 \leq 1$, we have
    \[
        \min_{t \in \{0, 1, ..., T-1\}} |\nabla f(x_t) | \geq \sqrt{\frac{\Delta}{16 \max\{1/\ell, \frac{\gamma}{(1-\alpha) v_0} T^{1-\alpha} \}}},
    \]
    and (2) if $\alpha = 0$, for $\beta_1=0$ and any $0 \leq \beta_2 \leq 1$, we have
    \[
        \min_{t \in \{0, 1, ..., T-1\}} |\nabla f(x_t) | \geq v_0.
    \]
\end{theorem}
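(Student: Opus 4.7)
My plan is to treat the two cases separately, exploiting the general reduction in Lemma~\ref{lemma:lower_sgd_momentum} for part~(1) and switching to a direct, hand-crafted oscillating instance for part~(2).

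For part~(1), AMSGrad-norm is an instance of Algorithm~\ref{alg:sgd_momentum} with effective stepsize $\eta_t = \gamma_t/\sqrt{\hat v_{t+1}^2}$, so Lemma~\ref{lemma:lower_sgd_momentum} applies as soon as I produce a trajectory-independent upper bound $\tilde\eta_t$ on $\eta_t$. The key observation is that $\hat v_{t+1}^2$ is a running maximum initialized at $v_0^2$, hence $\hat v_{t+1}^2 \geq v_0^2$ deterministically, giving $\eta_t \leq \tilde\eta_t := \gamma/((t+1)^\alpha v_0)$. An integral comparison then yields $\sum_{t=0}^{T-1} \tilde\eta_t \leq \gamma T^{1-\alpha}/((1-\alpha)v_0)$ for $0 < \alpha < 1$, and plugging this into Lemma~\ref{lemma:lower_sgd_momentum} immediately gives the claim.

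For part~(2) with $\alpha = 0$, the same approach fails: the summed bound $T\gamma/v_0$ would make Lemma~\ref{lemma:lower_sgd_momentum} return a lower bound vanishing with $T$, whereas we want a $T$-independent constant $v_0$. I would instead build a direct bad instance. Let $f(x) = (v_0/\gamma)\, x^2$, which is $L$-smooth with $L = 2v_0/\gamma$, and pick $x_0 = \gamma/2$. The hypotheses $v_0 \leq \ell\gamma/2$ and $\gamma \leq 4\Delta/v_0$ translate respectively to $L \leq \ell$ (so $f$ is $\ell$-smooth) and $f(x_0) - \inf f = v_0\gamma/4 \leq \Delta$. The decisive step is to notice that $|\nabla f(x_0)| = L\gamma/2 = v_0$ coincides with the initialization of the accumulator, so $v_1^2 = \beta_2 v_0^2 + (1-\beta_2)v_0^2 = v_0^2$ and inductively $\hat v_{t+1}^2 \equiv v_0^2$ irrespective of $\beta_2$. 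Combined with $\beta_1 = 0$, the update collapses to $x_{t+1} = x_t - \gamma\,\mathrm{sign}(x_t)$, so the iterates oscillate exactly between $\gamma/2$ and $-\gamma/2$, and $|\nabla f(x_t)| = v_0$ at every step.

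The main subtle point is part~(2): the naive choice $f(x) = \ell x^2/2$ with $x_0 = v_0/\ell$ sustains the desired inequality only in the extreme cases $\beta_2 \in \{0, 1\}$; for intermediate $\beta_2$, a short numerical check shows that the running maximum $\hat v$ increases as the iterate overshoots, eventually allowing $|x_t|$ to drop below $v_0/\ell$ and breaking the bound. Tuning the curvature to $L = 2v_0/\gamma$ so that the very first gradient equals exactly $v_0$ is precisely what pins $\hat v$ at $v_0$ for every iteration, and this is the mechanism that makes the construction work uniformly over all $\beta_2 \in [0,1]$.
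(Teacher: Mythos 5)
Your proposal is correct and follows essentially the same route as the paper: part~(1) bounds $\eta_t \leq \gamma/((t+1)^\alpha v_0)$ via $\hat v_{t+1}^2 \geq v_0^2$ and invokes Lemma~\ref{lemma:lower_sgd_momentum}, and part~(2) uses the identical instance $f(x) = (v_0/\gamma)x^2$ with $x_0 = \gamma/2$, whose first gradient equals $v_0$ so that $\hat v$ stays pinned and the iterates oscillate between $\pm\gamma/2$. Your added observation about why the naive curvature $\ell$ would fail for intermediate $\beta_2$ is a nice clarification of the role of the tuned curvature $2v_0/\gamma$, but the argument itself coincides with the paper's.
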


\begin{remark}
    From the theorem, we can conclude that the optimal convergence rate $\frac{1}{\sqrt{T}}$
    for $\|\nabla f(x_t)\|$ is infeasible for AMSGrad with polynomially decreasing stepsize.
    When $\alpha=0$, a similar result can be obtained 
    for the case $\beta_1 \geq 0$, $\beta_2 = 0$ and small enough $v_0$.
\end{remark}
\begin{proof}
    For $\alpha > 0$, we have
    \[
        \eta_t = \frac{\gamma}{{(t+1)}^{\alpha} \sqrt{\hat v^2_{t+1}}}
        \leq \frac{\gamma}{{(t+1)}^{\alpha} v_0}.
    \]
    Let $\tilde \eta_t = \frac{\gamma}{{(t+1)}^{\alpha} v_0}$ and then we have
    \[
        \sum_{t=0}^{T-1} \tilde \eta_t = \sum_{t=0}^{T-1} \frac{\gamma}{{(t+1)}^{\alpha} v_0} 
        \leq  \frac{\gamma}{(1-\alpha)v_0} T^{1-\alpha}.
    \]
    Applying Lemma~\ref{lemma:lower_sgd_momentum} directly gives us the desired result.
    
    For $\alpha = 0$, we consider function
    \[
        f(x) = \frac{v_0}{\gamma} x^2.
    \]
    Note that since $v_0 \leq \frac{\ell\gamma}{2}$, the function is $\ell$-smooth.
    Let 
    \[
    x_0 = \frac{\gamma}{2},
    \]
    which satisfies the condition that $f(x_0) \leq \Delta$. Then after one update
    \begin{align*}
        v^2_1 = \beta_2 v^2_0 + (1-\beta_2) \|\nabla f(x_0)\|^2 = v^2_0 \\
        x_1 = x_0 - \frac{\gamma}{\sqrt{v^2_1}} \nabla f(x_0) = - \frac{\gamma}{2} = -x_0.
    \end{align*}
    If we continue this calculation, we find that the iterates will oscillate between
    $\frac{\gamma}{2}$ and $-\frac{\gamma}{2}$ forever, which finishes the proof.
\end{proof}

\paragraph{Proof for Theorem \ref{thm:amsgrad_stoc}}
\mbox{}\\

\begin{proof}
    We consider a two-dimensional function $f: \mathbb{R}^2 \rightarrow \mathbb{R}^2$, for $x = (x^1, x^2)^\top \in \mathbb{R}^2$,
    $$
    f(x) =  F(x^1),
    $$
    where its function value only depends on the first dimension and we will define $F: \mathbb{R} \rightarrow \mathbb{R}$ later. The gradient at $x$ is $\nabla f(x) = (\nabla F(x^1), 0)^\top$. We add the noise
     only  to the second dimension, i.e., $g(x; \xi) = (\nabla F(x^1), \xi)$. %
    For any $t \geq 0$, the probability density function of the noise as
    \[
        p_{\xi_t}(x) = \begin{cases} \frac{1}{s\zeta} \left(\frac{x}{s}\right)^{-1-\frac{2}{\zeta}}
            e^{-\left(\frac{x}{s}\right)^{-\frac{2}{\zeta}}}, &x \geq 0; \\
            \frac{1}{s\zeta} \left(\frac{-x}{s}\right)^{-1-\frac{2}{\zeta}}
            e^{-\left(\frac{-x}{s}\right)^{-\frac{2}{\zeta}}}, &x < 0,
        \end{cases}
    \]
    where  $s = \frac{\sigma}{\sqrt{\Gamma\left(1-\frac{\zeta}{2}\right)}}$.
    Note that the distribution is symmetric and $\Ep{\xi_t} = 0$.
    Also, we note that $|\xi_t|$ follows the Fr\'echet distribution~\citep{de2011generalized} with
    cumulative distribution function
    \[
        \Pr (|\xi_t| \leq x) = e^{-\left(\frac{x}{s}\right)^{-\frac{2}{\zeta}}},
    \]
    and
    \begin{align*}
        \Var{\xi_t} &= \Ep{|\xi_t|^2} - (\Ep{\xi_t})^2 \\
                    &= s^2 \Gamma\left(1-\frac{\zeta}{2}\right) - (\Ep{\xi_t})^2 \\
                    &\leq s^2 \Gamma\left(1-\frac{\zeta}{2}\right)
                    \\ 
                    & \leq \sigma^2,
    \end{align*}
    where we used the exact second moment for Fr\'echet distribution.
    
    Next, we will show that $\tilde \xi_t := \max_{0 \leq k \leq t}\{|\xi_k|\} \geq \Omega\left(\frac{1}{(t+1)^{\zeta - 1/2}}\right)$ with probability $\frac{1}{2}$.
    We know that $\tilde \xi_t$ also follows Fr\'echet 
    distribution with CDF
    \[
        \Pr (\tilde \xi_t \leq x) = \exp\left( {-\left(\frac{x}{s \cdot (t+1)^{\frac{\zeta}{2}}}\right)^{-\frac{2}{\zeta}}} \right).
    \]
    Then for constant $C > 0$,
    \begin{align*}
        \Pr (\tilde \xi_t \leq C\cdot (t+1)^{\zeta - \frac{1}{2}}) 
        &= \exp\left({-\left(\frac{C\cdot (t+1)^{\zeta - \frac{1}{2}}}{s \cdot (t+1)^{\frac{\zeta}{2}}}\right)^{-\frac{2}{\zeta}}} \right)\\
        &= \exp\left({- \left(\frac{C}{s}\right)^{\frac{2}{\zeta}} (t+1)^{\frac{1}{\zeta} - 1}} \right) \\
        &\leq \frac{1}{4(t+1)^2},
    \end{align*}
    where the last inequality is by selecting $C = \frac{s\left(e\left(\frac{1}{\zeta} - 1 \right)\right)^{\frac{\zeta}{2}}}{\sqrt{2}}$
    and using $\exp\left({- \frac{2^{m+1}}{e m}\cdot t^{m}} \right) \leq \frac{1}{4t^2}$ for any $t>0$ and $0<m<1$.
    Then using union bound, we have
    \begin{align*}
        \Pr (\tilde \xi_t > C\cdot (t+1)^{\zeta - \frac{1}{2}} \quad \text{for}\quad 0 \leq t \leq T-1) 
        \geq  1 - \sum_{t=0}^{T-1} \frac{1}{4(t+1)^2} \geq \frac{1}{2}.
    \end{align*}

    Now we have shown that with some probability, the noise is large enough. We can use this property
    to provide an upper bound $\tilde \eta_t$ for the stepsize as follow
    \begin{align*}
        \eta_t &= \frac{\gamma}{\sqrt{t+1} \sqrt{\tilde v_{t+1}}} \\
               &= \frac{\gamma}{\sqrt{t+1} \sqrt{\max_{0\leq k\leq t} \{\beta_2 v_{k} + (1-\beta_2) \|g(x_k; \xi_k) \|^2\}}} \\
               &\leq \frac{\gamma}{\sqrt{t+1} \sqrt{\max_{0\leq k\leq t} \{(1-\beta_2) \|g(x_k; \xi_k) \|^2\}}} \\
               &\leq \frac{\gamma}{\sqrt{t+1} \sqrt{\max_{0\leq k\leq t} \{(1-\beta_2) \|\xi_k\|^2\}}} \\
               &= \frac{\gamma}{\sqrt{t+1} \sqrt{(1-\beta_2)} \tilde \xi_t } \\
               &\leq \frac{\gamma}{C (t+1)^{\zeta} \sqrt{(1-\beta_2)} } \triangleq \Tilde{\eta}_t,
    \end{align*}
    This implies
    \begin{align*}
        \sum_{t=0}^{T-1} \tilde \eta_t \leq \frac{\gamma}{(1-\zeta) C \sqrt{(1-\beta_2)} }\left(T^{1-\zeta} - \zeta \right).
    \end{align*}
    We observe that the update with AMSGrad-norm in function $f$ corresponds to applying general SGD with momentum (Algorithm \ref{alg:sgd_momentum}) to function $F$ with stepsize $\eta_t$. Therefore, we can pick a hard instance $F$ according to Lemma~\ref{lemma:lower_sgd_momentum}, and by noting that $\|\nabla f(x)\| = |\nabla F(x^1)|$ we reach our conclusion.

\end{proof}

\begin{remark}
As we see above, the function $F$ in the proof is constructed by Lemma~\ref{lemma:lower_sgd_momentum}. We note that even assuming the gradients of $f$ to be bounded, i.e., $\|\nabla f(x)\| \leq K$ for all $x$, will not prevent the slow convergence in Theorem \ref{thm:amsgrad_stoc}. This is  because in the proof of Lemma~\ref{lemma:lower_sgd_momentum} all iterates stay between $[0, x_{\tau-1}]$ (e.g., $\tau = 1$ if $m_0 =0$), so we can construct any Lipschitz function outside of this segment. 
    
\end{remark}

\bigskip

\begin{algorithm}[ht] 
    \caption{AdaGrad-norm}
    \setstretch{1.23}
    \begin{algorithmic}[1]
      \STATE \textbf{Input:} initial point $x_0$, $v_0 > 0$ and $\eta > 0$
        \FOR{$t = 0,1,2,...$}
            \STATE sample  $\xi_t$
            \STATE $v_{t+1}^2 = v_t^2 + \|g(x_k;\xi_k)\|^2$
            \STATE  $x_{t+1} = x_t - \frac{\eta }{\sqrt{v_{t+1}^2}}g(x_t; \xi_t)$
        \ENDFOR
    \end{algorithmic} \label{alg:adagrad}
\end{algorithm}

\paragraph{Proof for Proposition \ref{prop:adagrad}}
\mbox{}\\
\begin{proof}
Define a function $\widetilde{f}: \mathbb{R}^d \times \mathbb{R} \rightarrow \mathbb{R}$ such that $\widetilde{f}(x, y) = f(x) - \frac{\ell}{2}y^2$. Since the $\widetilde{f}$ is  $\ell$-smooth and $\ell$-strongly concave about $y$, the condition number is defined to be $\kappa = 1$. Applying AdaGrad-norm to $f$ is equivalent to applying NeAda-AdaGrad (Algorithm 3 in \citep{yang2022nest}) to $\widetilde{f}$ with $y_t \equiv 0$. For every $x$, we know $y^*(x) \triangleq \argmax_{y} \widetilde{f}(x, y) = 0 $. Then  $\mathcal{E} \triangleq \sum_{t=0}^{T-1} \frac{\ell^2\|y_t - y^*(x_t)\|^2 }{2v_0} = 0$. Plugging in $\kappa = 1$, $\mathcal{E} = 0$ and batchsize $M = 1$ to Theorem 3.1 in \citep{yang2022nest}, we reach the conclusion.

\end{proof}

\end{document}